\pgfplotsset{every axis/.append style={
                    axis x line=middle,    % put the x axis in the middle
                    axis y line=middle,    % put the y axis in the middle
                    axis line style={->}, % arrows on the axis
                    xlabel={$x$},          % default put x on x-axis
                    ylabel={$v$},          % default put y on y-axis
            }}
\numberwithin{equation}{section}
\newcommand{\R}{\mathbb R} %% permet de faire plus simplement un R avec double barre
\newcommand{\N}{\mathbb N}
\newcommand{\C}{\mathbb C}
\newcommand{\1}{\mathbf 1}
\newcommand{\D}{\mathrm{d}}
\newcommand{\om}{\omega}
\newcommand{\e}{\mathrm{e}}
\newcommand{\hop}{\vskip.3cm\noindent} % pour faire un saut sans "indenter"
\newcommand{\hip}{\vskip.1cm\noindent}
\newtheorem{thm}{Theorem}[section]   % enlever le % devant [section] pour avoir une autre numerotation...
\newtheorem{prop}[thm]{Proposition}%[section]
\newtheorem{lem}[thm]{Lemma}%[section]
\newtheorem{defi}[thm]{Definition}%[section]
\newtheorem{rema}[thm]{Remark}%[section]
\newtheorem{hypo}[thm]{Hypothesis}%[section]
\DeclareRobustCommand{\SkipTocEntry}[5]{}
\title[Spectrum of a semiclassical random walk with a general potential]{Spectral analysis of a semiclassical random walk associated to a general confining potential} 
\author[T. Normand]{Thomas Normand}
\email{thomas.normand@univ-nantes.fr}
\date{}
\begin{document}

\begin{abstract}
We consider a semiclassical random walk with respect to a probability measure associated to a potential with a finite number of critical points.
We recover the spectral results from \cite{BoHeMi} on the corresponding operator in a more general setting and with improved accuracy.
In particular we do not make any assumption on the distribution of the critical points of the potential, in the spirit of \cite{michel}.
Our approach consists in adapting the ideas from \cite{michel} to the recent \emph{gaussian quasimodes} framework which appears to be more robust than the usual methods, especially when dealing with non local operators.
\end{abstract}

\maketitle

\tableofcontents

\section{Introduction}

\subsection{Motivation}

Consider the probability measure $\D \mu_h(x)=Z_h \e^{-W(x)/h}\D x$ on $\R^d$, where $W:\R^d \to \R$ is a smooth function, $h>0$ is a small parameter and $Z_h$ is a normalization factor; as well as the Markov kernel
$$t_h(x,\D y)=\frac{1}{\mu_h(B(x,h))}\1_{|x-y|<h}\D \mu_h (y).$$
This kernel describes the following random walk: if at time $n\in \N$ the walk is in $x_n$, then the point $x_{n+1}$ is chosen in the small ball $B(x_n,h)$ uniformly at random with respect to d$\mu_h$.
Note that if $W$ is a Morse function, the density $\e^{-W/h}$ concentrates at scale $\sqrt h$ around the local minima of $W$, while the moves of the walk are at scale $h$.
We can also associate to $t_h(x,\D y)$ the bounded operator on $L^\infty(\R^d)$
$$\mathbf T_hf(x)=\int_{\R^d}f(y) t_h(x,\D y)=\frac{1}{\mu_h(B(x,h))}\int_{|x-y|<h}f(y)\D \mu_h(y).$$
It features the Markov property $\mathbf T_h(1)=1$ and leaves the subspace of continuous functions going to zero at infinity invariant.
Its adjoint is defined by duality on the set of bounded measures by
$$\mathbf T_h^* (\D \nu)=\Big(\int_{\R^d} \1_{|x-y|<h} \mu_h(B(y,h))^{-1} \D \nu(y) \Big) \D \mu_h.$$
If the starting point $x_0$ of the random walk is distributed according to $\D \nu$, then $x_1$ is distributed according to $\mathbf T_h^* (\D \nu)$.
More genrally, $x_n$ is distributed according to $(\mathbf T_h^* )^n(\D \nu)$.
One can easily check that $\mathbf T_h^*$ admits the invariant measure
$$\D \nu_{h,\infty}=\tilde Z_h \mu_h(B(x,h))\D \mu_h(x)$$
where $\tilde Z_h$ is a normalization factor.

This Markov process was studied in \cite{BoHeMi} where the authors showed the convergence of $(\mathbf T_h^* )^n(\D \nu)$ towards $\D \nu_{h,\infty}$ when $n\to \infty$ and gave some precise information on the speed of convergence.
More precisely, they showed by an interpolation argument that $\mathbf T_h^*$ extends as a bounded self-adjoint operator on $L^2(\D \nu_{h,\infty})$ with norm 1 
%\color{red} ici je suis volontairement vague car je ne comprends pas pq la norme de $T_h$ dans le $L^\infty$ à poids vaut 1 \cite[milieu page 2]{BoHeMi} \color{black}
and gave an accurate description of its spectrum near $1$.
It enables the authors to state in \cite[Corollary 1.4]{BoHeMi} the existence of metastable states, in the spirit for instance of the works \cite{BoEcGaKl,BoGaKl}.
This indicates in particular that a very large number of iterations is required to make sure that the system returns to equilibrium.

The convergence of Markov chains to stationary distributions is a natural subject of interest.
Such information is for instance used to sample a given probability in order to implement Monte-Carlo methods (see \cite{LeRoSt}).
Some first results for discrete time processes on continuous state space were obtained in \cite{DiLe,DiLeMi,LeMi,GuMi}.
In these papers, the spectral gap of the studied operators is of order $h^2$ as the probability $\D \mu_h$ does not depend on $h$.
In our case, we have to deal with an exponentially small spectral gap.
 
The precise asymptotics of this gap and more generally of the eigenvalues exponentially close to 1 were obtained in \cite{BoHeMi} thanks to the exhibition of a supersymmetric-type structure for the operator, allowing to see it as a Hodge-Witten Laplacian on 0-forms for some pseudo-differential metric.
The idea is then to study both the associated derivative acting from 0-forms into 1-forms and its adjoint with the help of basic quasimodes.
This used to be a common method to study the small spectrum of semiclassical operators (see for instance \cite{HeKlNi,HHS}).
However, our goal here will be to give precise spectral asymptotics for the operator $\mathbf T_h^*$ through a more recent approach developed for the study of Fokker-Planck type differential operators in \cite{LPMichel,BonyLPMichel} and adapted to a non local framework in \cite{me}.
This approach consists in directly constructing a family of accurate quasimodes for our operator that we call \emph{gaussian quasimodes}.

The results found in the literature about the spectrum of semiclassical operators associated to some potential are often established for some particular potentials or at least satisfying a non degeneracy assumption (see for instance \cite[assumption (Gener)]{BonyLPMichel} or \cite[Hypothesis 3.11]{me}); except in \cite{michel} where the case of general potentials was treated for the Witten Laplacian.
In this spirit, the aim of this work is to adapt the ideas introduced in \cite{michel} to a non local framework and to the use of gaussian quasimodes to obtain a sharp description of the spectrum of $\mathbf T_h^*$ near 1 without the usual non degeneracy assumption on the potential $W$.

\subsection{Setting}

%Les résultats fournis dans la littérature sur le spectre de ces opérateurs sont couramment démontrés pour des potentiels particuliers ou au moins vérifiant l'hyopthèse générique ??, à l'exception de \cite{michel} où le cas de potentiels généraux a été traité pour le Laplacien de Witten et de \cite{BonyLPMichel} pour des opérateur différentiels de type Fokker-Planck, où la deuxième partie de cette hypothèse a été relaxée.
%Dans cet esprit, le but de ce travail est d'adapter les idées introduites dans \cite{michel} à un cadre non local et à l'utilisation de la méthode des QG afin de décrire le spectre près de 0 de $P_h$ sans avoir recours à l'hypothèse générique ??.
Before we can state the properties of the potential $W$ and the associated operator, let us introduce a few notations of semiclassical microlocal analysis which will be used in all this paper.
These are mainly extracted from \cite[chapter 4]{Zworski}.
%\begin{defi}\label{symbols}
We will denote $\xi\in \R^{d}$ %(resp. $\eta$) 
the dual variable of $x$ %(resp. $v$) 
and consider the space of semiclassical symbols 
\begin{align*}
S^0 \big(\langle x \rangle^k \langle \xi_m \rangle^{k'} \langle \xi_p \rangle^{k'}\big)=\big\{a^h \in \mathcal C^\infty(\R^{2d}) \, ; \, \forall \alpha \in \N^{2d}, \exists \, C_\alpha>0\;
		\emph{\text{such that }} |\partial^\alpha a^h(x,\xi)|\leq C_\alpha \langle x \rangle^k \langle \xi_m \rangle^{k'} \langle \xi_p \rangle^{k'}\big\}
\end{align*}
where $m$, $p\in \llbracket 1,d \rrbracket$ and $k$, $k'\in \R$. % and $\kappa \in [0, 1/2]$. Note that those symbols are allowed to depend on $h$; however, in order to shorten the notations, we will drop the index $h$ in the rest of the paper when dealing with semiclassical symbols.
%\end{defi}
%\begin{defi}\label{oph}
Given a symbol $a^h \in S^0(\langle x \rangle^k \langle \xi_m \rangle^{k'} \langle \xi_p \rangle^{k'})$, we define the associated semiclassical pseudo-differential operator for the Weyl quantization acting on functions $u \in \mathcal S(\R^{d})$ by
$$\mathrm{Op}_h(a^h)u(x)=(2\pi h)^{-d}\int_{\R^{d}} \int_{\R^{d}}\e^{\frac ih (x-x')\cdot \xi}a^h\Big(\frac{x+x'}{2},\xi\Big)u(x')\,\D x'\D \xi$$
where the integrals may have to be interpreted as oscillating integrals.
\hip
Our only hypothesis on the potential $W$ is the following.

\begin{hypo}\label{W}
The potential $W$ is a smooth Morse function with values in $\R$ such that
\begin{align}\label{confin}
\e^{-W/h}\in L^2(\R^{d}),  \quad \quad \lim_{|x|\to +\infty}W(x)=+\infty \quad \text{and} \quad  W\in S^0(\langle x\rangle^\eta) 
\end{align}
%Its gradient satisfies
%$$|\nabla W(x)|\leq C \langle x \rangle^\eta$$
for some $\eta \in \N$.
Moreover, for every $0\leq k\leq d$, the set of critical points of index $k$ of $W$ that we denote $\mathcal U^{(k)}$ is finite and we set 
\begin{align}\label{n0}
n_0=\# \mathcal U^{(0)}.
\end{align}
Finally, we will suppose that $n_0\geq 2$.
\end{hypo}
\hip
For the operator $\mathbf T_h^*$ acting on $L^2(\D \nu_{h,\infty})$, as it is more convenient to work with the standard Lebesgue measure, and since we want to study its spectrum near 1, we choose to consider the operator
\begin{align}\label{ph}
P_h=\mathrm{Id}-\mathcal M_h^{1/2} \circ \mathbf T_h^*\circ \mathcal M_h^{-1/2}%\qquad \text{acting on } L^2(\R^d)
\end{align}
instead, where $\mathcal M_h^{1/2}$ stands for both the square root of the function
$$\mathcal M_h(x)=\tilde Z_h Z_h \mu_h(B(x,h))\e^{-W(x)/h}$$
and the associated unitary operator from $L^2(\D \nu_{h,\infty})$ to $ L^2(\R^d)$.
Our goal is now to give a sharp description of the small spectrum of $P_h$ acting on $L^2(\R^d)$.
We will actually be able to treat the case of some slightly more general operators $P_h$ than the one given by \eqref{ph}.
%Dans le but de séparer les difficultés et de concentrer ces denières sur la topologie du potentiel $W$, on va condidérer un opérateur $P_h$ présentant des propriétés agréables qu'on énumère dans l'hypothèse suivante.
In order to focus the difficulties mainly on the topology of the potential $W$, we will consider some operators $P_h$ which still present some nice properties, even though it makes no doubt that we could adopt an even more general setting.  

More precisely, let us introduce some notions of expansions of symbols: we will say that 
\begin{align}\label{exph}
a^h \sim \sum_{j \geq 0} h^j a_j \quad\text{ in }S^0(1)
\end{align}
if $(a_j)_{j \geq 0}\subset S^0(1)$ is a family of symbols independent of $h$ and such that for all $N\in \N$, 
$$ a^h-\sum_{j=0}^{N-1}h^j a_j = O_{S^0(1)}(h^N).$$
%We will denote $\Psi^\kappa(\langle (X,\xi) \rangle^k)$ the set of such operators.
%Note that 
%the operator $\mathrm{Op}_h(a)$ admits the distributional kernel
%$$K_h(X,X')=\mathcal F_h^{-1}\bigg(a\bigg(\frac{X+X'}{2},\cdot\bigg)\bigg)(X-X').$$
%Conversely, if an operator $\mathrm{Op}_h(a)\in \Psi^\kappa(\langle (X,\xi) \rangle^k)$ admits the distributional kernel $K_h(X,X')$, then its symbol is given by
%\begin{align}\label{FK}
%a(X,\xi)=\mathcal F_h\Big(\big(K_h\circ A\big)(X,\cdot)\Big)(\xi)
%\end{align}
%where $A$ denotes the change of variables 
%\begin{align}\label{mata}
%A(X,X')=(X+X'/2,X-X'/2).
%\end{align}
%In our setting, we will denote $\xi$ (resp. $\eta$) the dual variable of $x$ (resp. $v$).
%and $A_v$ the change of variables 
%\begin{align}\label{Av}
%A_v:(x,v,v')\mapsto (x,v+v'/2,v-v'/2).
%\end{align}
We also need to introduce the notion of analytic symbols.
%For our purpose, we almost always consider symbols that do not depend on the variable $\xi$.
\begin{defi}\label{symbolo}
For $\kappa>0$, let us introduce the set 
$$\Sigma_\kappa=\{z\in \C \, ; \, |\mathrm{Im}\,z|< \kappa\}^d\subset \C^d.$$
We denote $S^0_\kappa(\langle x \rangle^k \langle \xi_m \rangle^{k'} \langle \xi_p \rangle^{k'})$ the space of symbols $a^h\in S^0(\langle x \rangle^k \langle \xi_m \rangle^{k'} \langle \xi_p \rangle^{k'})$ such that:
\begin{enumerate}[label=$\mathrm{(\roman*)}$]
\item For all %$\beta \in \N^{2d}$ and 
$x\in \R^{d}$, %$\partial^\beta_{(x,v)} 
$a^h(x,\cdot)$ is analytic on $\Sigma_\kappa$ \label{holom}
\item For all $\beta \in \N^{d}$, there exists $C_\beta>0$ such that $|\partial_{x}^{\beta} a^h |\leq C_\beta \langle x \rangle^k \langle \xi_m \rangle^{k'} \langle \xi_p \rangle^{k'}$%\qquad \text{on } 
on $\R^{d}\times \Sigma_\kappa.$ \label{majobande}
\end{enumerate}
We will also use the notation $a^h=O_{S^0_\kappa(\langle x \rangle^k \langle \xi_m \rangle^{k'} \langle \xi_p \rangle^{k'})}(h^N)$ to say that for all $\alpha \in \N^{2d}$, there exists $C_{\alpha, N}$ such that
$| \partial^\alpha  a^h |\leq C_{\alpha, N} \, h^N \langle x \rangle^k \langle \xi_m \rangle^{k'} \langle \xi_p \rangle^{k'}$
on $\R^{d}\times \Sigma_\kappa$.
\end{defi}
\hip
%Here again, we will drop the index $h$ in the notations of analytic symbols.
Using the Cauchy-Riemann equations, we see that item \ref{holom} from Definition \ref{symbolo} implies that for all $\beta \in \N^{d}$ and $x\in \R^{d}$, the functions $\partial^\beta_{x} a^h(x,\cdot)$ are also analytic on $\Sigma_\kappa$.
Besides, the Cauchy formula implies that for any $\tilde \kappa<\kappa$, $\alpha \in \N^d$ and $\beta\in \N^{d}$, there exists $C_{\alpha,\beta}$ such that
$$|\partial_\xi^\alpha\partial_{x}^\beta a^h |\leq C_{\alpha,\beta} \langle x \rangle^k \langle \xi_m \rangle^{k'} \langle \xi_p \rangle^{k'} \qquad \text{ on } \R^{d}\times \Sigma_{\tilde \kappa}$$
i.e up to taking $\kappa$ smaller, item \ref{majobande} from Definition \ref{symbolo} can be extended to $\beta\in \N^{2d}$.
When dealing with analytic symbols, our notion of expansion becomes 
\begin{align}\label{exph}
a^h \sim \sum_{j \geq 0} h^j a_j \quad\text{ in }S^0_\kappa(\langle x \rangle^k \langle \xi_m \rangle^{k'} \langle \xi_p \rangle^{k'})
\end{align}
if $(a_j)_{j \geq 0}\subset S^0_\kappa(\langle x \rangle^k \langle \xi_m \rangle^{k'} \langle \xi_p \rangle^{k'})$ is a family of symbols independent of $h$ and such that for all $N\in \N$, 
$$ a^h-\sum_{j=0}^{N-1}h^j a_j = O_{S^0_\kappa(\langle x \rangle^k \langle \xi_m \rangle^{k'} \langle \xi_p \rangle^{k'})}(h^N).$$
%Finally, we also have the usual notion of classical expansion for a symbol: $a \sim \sum_{j \geq 0} h^j a_j$ in $S^0(\langle (x,v,\eta) \rangle^k)$ (resp. in $S^0_\tau(\langle (x,v,\eta) \rangle^k)$) means that $a \sim_h \sum_{j \geq 0} h^j a_j$ in $S^0(\langle (x,v,\eta) \rangle^k)$ (resp. in $S^0_\tau(\langle (x,v,\eta) \rangle^k)$) and the $(a_j)_{j \geq 0}$ are independent of $h$.\\
We now extend these notions to matrix valued symbols: if $n_1$, $n_2\in \llbracket 1,d \rrbracket$ and
$$q^h=(q_{m,p})\mathop{}_{\substack{1\leq m \leq n_1 \\  1\leq p \leq n_2}}$$
\sloppy is a matrix of functions such that each $q_{m,p}\in S^0_\kappa(\langle x \rangle^k \langle \xi_m \rangle^{k'} \langle \xi_p \rangle^{k'})$, we say that $q^h\in \mathcal M_{n_1,n_2}\big(S^0_\kappa(\langle x \rangle^k \langle \xi_m \rangle^{k'} \langle \xi_p \rangle^{k'})\big)$ and we denote 
$$\mathrm{Op}_h(q^h)=\Big(\mathrm{Op}_h(q_{m,p})\Big)\mathop{}_{\substack{1\leq m \leq n_1 \\  1\leq p \leq n_2}}.$$
Even though it does not appear in the notations, the function $q_{m,p}$ may also depend on $h$.
The notation 
$$ q^h=O_{\mathcal M_{n_1,n_2}\big(S^0_\kappa(\langle x \rangle^k \langle \xi_m \rangle^{k'} \langle \xi_p \rangle^{k'})\big)}(h^N)$$
means that for all $(m,p)\in \llbracket 1,n_1 \rrbracket\times \llbracket 1,n_2 \rrbracket$, the symbol $q_{m,p}$ is $O_{S^0_\kappa(\langle x \rangle^k \langle \xi_m \rangle^{k'} \langle \xi_p \rangle^{k'})}(h^N)$.
Furthermore, the notion of expansion $q\sim \sum_{n \geq 0} h^n q_n$ in $\mathcal M_{n_1,n_2}\big(S^0_\kappa(\langle x \rangle^k \langle \xi_m \rangle^{k'} \langle \xi_p \rangle^{k'})\big)$ is a straightforward adaptation of the one for scalar symbols.
\hop
These notions enable us to introduce the class of operators that we will consider.
Let us denote $d_W$ the twisted derivative 
$$d_W=h\nabla +\nabla W.$$
We also use the standard notation $\mathcal M_d(\R)$ for the set of all $d$-by-$d$ real matrices.
\begin{hypo}\label{hyporw}
We assume that $P_h$ is a bounded operator such that
$$P_h=a^h \circ \tilde P_h \circ a^h$$
with 
\begin{align}\label{factolab}
\tilde P_h=d_W^* \circ \widehat Q \circ d_W
\end{align}
where $\widehat Q=\mathrm{Op}_h(q^h)$ is a self-adjoint, non negative pseudo-differential operator and $a^h\sim \sum_{j\geq 0}h^j a_j$ in $S^0(1)$ is a positive symbol such that $(a^h)^{-1}\in S^0(1)$ and $a_0(x)=1$ as soon as $x$ is a critical point of $W$.
Moreover, %the symbol $q^h$ satisfies the following:
\begin{enumerate}[label=\alph*)]
\item $P_h$ admits $0$ as a simple eigenvalue.
\item There exist $c>0$ and $h_0>0$ such that for all $0<h\leq h_0$, we have that 
	$\mathrm{Spec}(P_h)\cap [0, ch]$ consists of exactly $n_0$ eigenvalues (counted with algebraic multiplicity) %that are in $\{\mathrm{Re}$ $ z>0\}$ and 
	that are exponentialy small with respect to $1/h$.
\item For all $x,$ $\xi\in \R^d$, we have $q^h(x,-\xi)=q^h(x,\xi)$.
\item The symbol $q^h$ is analytic in the variable $\xi$. More precisely, there exists $\kappa>0$ such that
$$q^h\sim \sum_{n\geq 0}h^n q_n \text{ in the space of analytic symbols }\mathcal M_d\big(S^0_\kappa (\langle \xi_m \rangle^{-1} \langle \xi_p \rangle^{-1} )\big).$$
\item
There exists a constant $\varrho>0$ such that for all saddle point $\mathbf s$ of $W$, we have $q_0(\mathbf s,0)=\varrho \,\mathrm{Id}$. \label{q0id}
\end{enumerate}
In particular, the resolvent estimate
\begin{align}\label{resest}
(z-P_h)^{-1}=O(h^{-1})
\end{align}
is satisfied on $|z|=ch$.
\end{hypo}
\hip
Throughout the paper, we will work under Hypothesis \ref{hyporw}.

%Il est démontré dans \cite{BoHeMi} (Theorem 1.1 as well as Lemmas 3.2 and 3.4) que l'opérateur de marche aléatoire étudié dans cette référence fournit un exemple d'opérateur non local vérifiant l'Hypothèse \ref{hyporw}.
%On pourrait par exemple assez aisément traiter le cas où la matrice identité est remplacée par une matrice définie positive dans le point \ref{q0id} (comme c'est le cas dans \cite{me}), mais ce sont bien les considérations liées au potentiel qui nous intéressent dans cet article.
It is shown in \cite{BoHeMi} (Theorem 1.1 as well as Corollary 3.1) that the random walk operator \eqref{ph} studied in this reference is an example of a non local operator satisfying Hypothesis \ref{hyporw} with $\varrho=(2d+4)^{-1}$.
We could for instance easily treat the case where the identity matrix is replaced by a positive definite matrix in item \ref{q0id} (as it is done in \cite{me}), but our real interest here are the considerations brought by the potential $W$.
\hop

The main result of this paper is Theorem \ref{thmgene} in which we give a sharp description of the small eigenvalues of $P_h$. 
As we have not yet introduced all the technical objects involved in this statement, let us for the moment give a rather vague version of this result.

\begin{thm}
Under Hypothesis \ref{hyporw}, there exist $p\in \N$ and a finite set $\mathcal A$ both explicit as well as some positive definite matrices $(\mathcal M^{\alpha,j}_h)_{1\leq j\leq p\,;\, \alpha\in \mathcal A}$ depending on $\varrho$ from Hypothesis \ref{hyporw} and admitting a classical expansion whose first term is given in Theorem \ref{thmgene} such that
$$\Big(\mathrm{Spec}(P_h)\cap [0, ch]\Big)\subset h  \bigcup_{\alpha\in \mathcal A} \bigcup_{j=1}^p \e^{-2\hat S_j/h} \Big( \mathrm{Spec} \big( \mathcal M^{\alpha,j}_h + D\big(0,O(h^\infty)\big)\Big) $$
where $\{\hat S_1<\dots < \hat S_p\}$ are the finite values taken by the map $S$ introduced in Definition \ref{j et s}.
\hop
In particular, when $P_h$ is given by \eqref{ph}, we have %denoting $(\widetilde{\mathcal M}^{\alpha,j}_h)_{1\leq j\leq p\,;\, \alpha\in \mathcal A}$ the associated matrices gives
$$\Big(\mathrm{Spec}(\mathbf T_h^*)\cap [1- ch, 1]\Big)\subset \left( 1- h  \bigcup_{\alpha\in \mathcal A} \bigcup_{j=1}^p \e^{-2\hat S_j/h} \Big( \mathrm{Spec} \big( \mathcal M^{\alpha,j}_h + D\big(0,O(h^\infty)\big)\Big)\right)$$
with $\varrho=(2d+4)^{-1}$.
\end{thm}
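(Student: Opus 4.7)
The plan is to reduce the spectral analysis of $P_h$ in the interval $[0,ch]$ to a finite dimensional matrix problem by constructing accurate gaussian quasimodes attached to the local minima, grouped according to the saddle values given by the map $S$ of Definition \ref{j et s}. First, using the labeling procedure from \cite{michel} (which is the whole point of working without a non-degeneracy assumption on $W$), I would partition $\mathcal U^{(0)}$ into families $E_\alpha$ indexed by $\alpha\in\mathcal A$, each associated to a saddle value $\hat S_{j(\alpha)}$: the idea is that a minimum $\mathbf m$ "escapes" at the sublevel set $\{W<W(\mathbf m)+S(\mathbf m)\}$ which is encoded combinatorially by the descending topology of $W$. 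For each such $\mathbf m$ I would then build a gaussian quasimode $\varphi_\mathbf m$ of the form $\chi_\mathbf m\,\e^{-(W-W(\mathbf m))/h}$ (up to normalization and a suitable polynomial prefactor coming from the classical expansion), where $\chi_\mathbf m$ is a smooth cutoff supported in the connected component of the sublevel set containing $\mathbf m$.

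Next, using the factorization from Hypothesis \ref{hyporw}, namely $P_h = a^h \circ d_W^*\widehat Q d_W \circ a^h$, and the fact that $a_0 = 1$ at critical points, I would compute the interaction matrix entries
\begin{equation*}
M_{\mathbf m,\mathbf m'} = \langle P_h \varphi_\mathbf m, \varphi_{\mathbf m'}\rangle
= \langle \widehat Q\, d_W(a^h\varphi_\mathbf m),\, d_W(a^h\varphi_{\mathbf m'})\rangle.
\end{equation*}
The function $d_W(a^h\varphi_\mathbf m)$ is exponentially small away from a thin neighborhood of $\mathrm{supp}(\nabla \chi_\mathbf m)$, which by construction lies near the relevant saddle points at height $W(\mathbf m)+\hat S_{j(\alpha)}$. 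A Laplace-type stationary phase computation localized near each such saddle $\mathbf s$ — using the analyticity of $q^h$ in $\xi$ to deform the contour and evaluate the non local action of $\widehat Q$, together with item \ref{q0id} yielding $q_0(\mathbf s,0)=\varrho\,\mathrm{Id}$ — would give the asymptotic behavior of $M_{\mathbf m,\mathbf m'}$ as $h\,\e^{-2\hat S_j/h}$ times an explicit matrix element admitting a classical expansion in $h$. Combined with a Gram matrix that is close to the identity (the supports $\chi_\mathbf m$ being nearly disjoint modulo exponentially small errors), this yields the matrices $\mathcal M^{\alpha,j}_h$ appearing in the statement.

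Finally, a standard min-max / Fan inequality argument, using the resolvent estimate \eqref{resest} on $|z|=ch$ and the $n_0$-dimensional spectral projector onto the small eigenvalues, would show that the small spectrum of $P_h$ is given, up to $O(h^\infty)$ errors on each scale $h\,\e^{-2\hat S_j/h}$, by the spectra of the $\mathcal M^{\alpha,j}_h$. The transcription for $\mathbf T_h^*$ is then immediate from \eqref{ph} and unitarity of $\mathcal M_h^{1/2}$, with $\varrho=(2d+4)^{-1}$ imported from \cite{BoHeMi}. The main obstacle I anticipate is the \emph{decoupling} across saddle values: because minima sharing the same $\hat S_j$ must be treated together while those attached to strictly smaller or larger $\hat S_{j'}$ contribute only to lower-order or higher-order scales, the cutoffs must be engineered with great care (in the spirit of \cite{michel}) so that $d_W\varphi_\mathbf m$ concentrates only on saddles at exactly the prescribed height and not on spurious ones. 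The non local nature of $\widehat Q$ amplifies this difficulty, and this is precisely where the gaussian quasimode framework — robust under pseudodifferential perturbations provided the quasimodes decay as true Gaussians and $q^h$ is analytic in $\xi$ — becomes essential, replacing the traditional Witten-complex manipulations used in \cite{BoHeMi}.
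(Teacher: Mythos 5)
Your overall architecture (reduction to an $n_0\times n_0$ interaction matrix via quasimodes attached to minima, using the labeling and equivalence classes from \cite{michel}, localizing $d_W$ of the quasimodes near the relevant saddle points, and exploiting the analyticity of $q^h$ in $\xi$ via a contour deformation) is the right skeleton, but two load-bearing ingredients are missing or incorrect, and each would prevent the argument from closing.

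First, your quasimode $\chi_{\mathbf m}\e^{-(W-W(\mathbf m))/h}$ "with a suitable polynomial prefactor from the classical expansion" is not what the paper uses and cannot deliver $O(h^\infty)$ accuracy. The gaussian quasimodes \eqref{quasim} carry two essential extra features. (i) Each $f_{\mathbf m,h}$ is a \emph{linear combination} $(a^h)^{-1}\chi_\alpha\sum_{\tilde{\mathbf m}\in\widehat{\mathcal U}^{(0)}_\alpha}\varphi^\alpha_{\mathbf m}(\tilde{\mathbf m})\,c^\alpha_h(\tilde{\mathbf m})\,\theta^\alpha_{\tilde{\mathbf m},h}\,\e^{-W_{\mathbf m}/h}$ over the whole class $\widehat{\mathcal U}^{(0)}_\alpha$, with coefficients $\varphi^\alpha_{\mathbf m}$ built in Lemma \ref{phiexist}. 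For type II minima the supports of the building blocks genuinely overlap, so your claim that the Gram matrix is close to the identity because "the supports $\chi_{\mathbf m}$ are nearly disjoint" is false; the almost-orthogonality in Proposition \ref{ortho} instead comes from $\langle\varphi^\alpha_{\mathbf m},\varphi^\alpha_{\mathbf m'}\rangle_{\mathcal F_\alpha}=\delta_{\mathbf m,\mathbf m'}$. This linear-combination structure is precisely the device from \cite{michel} for dropping the non-degeneracy assumption, and without it the Gram estimate fails. (ii) The cutoffs $\theta^\alpha_{\tilde{\mathbf m},h}$ are not polynomial corrections but renormalized gaussian error functions as in \eqref{thetainte}, with phases $\ell^{\mathbf s,\tilde{\mathbf m}}\sim\sum_j h^j\ell_j^{\mathbf s,\tilde{\mathbf m}}$ chosen to solve the eikonal equation \eqref{eikon} and the transport hierarchy \eqref{transport}. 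It is this optimization (Sections \ref{sectquasim}--\ref{sectionequations}) that makes $\|P_hf_{\mathbf m,h}\|=O(h^\infty\e^{-S(\mathbf m)/h})$ (Lemma \ref{Pf^2}); with a bare cutoff you only get an $O(h)$-accurate prefactor, i.e.\ the weaker result of \cite{BoHeMi} that this theorem is improving upon.

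Second, closing with a "min-max / Fan inequality" cannot give the claimed scale-by-scale accuracy. After projecting onto the small spectral subspace, the matrix of $P_h$ in the basis $(\hat u_j)$ takes the graded form $h\e^{-2\hat S_1/h}\,\Omega(\varpi)\bigl(M^\#_h+O(h^\infty)\bigr)\Omega(\varpi)$, where $\Omega(\varpi)$ is diagonal with entries $\varepsilon_j(\varpi)=\e^{-(\hat S_j-\hat S_1)/h}$ on disparate exponential scales. A Fan- or Weyl-type perturbation bound controls eigenvalues only up to the operator norm of the error, which lives on the coarsest scale $h\e^{-2\hat S_1/h}\cdot O(h^\infty)$; it cannot resolve eigenvalues at the finer scales $h\e^{-2\hat S_j/h}$ for $j>1$, where that error is not small relative to the eigenvalues. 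The paper resolves this by invoking the "classical graded symmetric matrix" theory of \cite{BonyLPMichel}: positive definiteness of each $M^\alpha_h$ (from injectivity of $\mathcal T_0$ and the factorization $N^{\alpha,0}=L_\alpha^*L_\alpha$), a block-by-block reduction via Schur complements $\mathcal R_j$, and Theorem 4 there. The decoupling across saddle values that you correctly identify as the main obstacle is thus resolved algebraically at the matrix level, not by engineering cutoffs alone.
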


It is an improvement of the result of \cite{BoHeMi} in two ways: first, we treat in the spirit of \cite{michel} the case of general potentials satisfying only Hypothesis \ref{W} instead of Hypotheses 1 and 2 from \cite{BoHeMi}.
Moreover, we establish complete asymptotic expansions of the small eigenvalues, i.e the remainder terms in the prefactors are of order $h^\infty$ and not of order $h$ as in \cite{BoHeMi}.
This work can also be seen as an adaptation of the considerations from \cite{michel} to a non local framework and to the gaussian quasimodes approach, as there exist some operators for which it is the only known approach to succeed (see for instance the Boltzmann operators from \cite{me,meRL}).

\section{General labeling of the potential minima}

Before we can construct our quasimodes, we need to recall the general labeling of the minima which originates from \cite{HeKlNi} and was generalized in \cite{HHS11}, as well as the topological constructions that go with it.
Here we only introduce the essential objects and omit the proofs.
For more details, we refer to \cite{michel,me}. %where it is in particular shown that, roughly speaking, the constructions for the potential $V/2$ are the projections on $\R^d_x$ of the ones for the global potential $W$.
Recall that we denote 
\begin{align}\label{UkY}
\mathcal U^{(k)} \text{ the critical points of }W \text{ of index }k.
\end{align}
For shortness, we will write \og CC \fg{} instead of \og connected component \fg{}.
The constructions rely on the following fundamental observation which is an easy consequence of the Morse Lemma (see for instance \cite{me}, Lemma 3.1 for a proof):
\begin{lem}\label{1.4}
If $x\in \mathcal U^{(1)}$, then there exists $r_0>0$ such that for all $0<r<r_0$, $x$ has a connected neighborhood $\mathcal O_r$ in $B(x,r)$ such that $\mathcal O_r\cap \{W< W(x)\}$ has exactly 2 CCs.\\
\end{lem}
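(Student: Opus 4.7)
The plan is to reduce to a local normal form via the Morse Lemma and then inspect the geometry of a quadratic form of signature $(1,d-1)$.

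First, since $x \in \mathcal U^{(1)}$, the Morse Lemma yields an open neighborhood $U$ of $x$, an open neighborhood $V$ of $0$ in $\R^d$ and a smooth diffeomorphism $\phi : U \to V$ with $\phi(x)=0$ such that
$$
(W\circ \phi^{-1})(y) - W(x) \;=\; -y_1^2 + y_2^2 + \cdots + y_d^2 \;=:\; Q(y)
\qquad \text{for all } y \in V.
$$
Because $\phi$ is a homeomorphism with $\phi(x)=0$, one can choose $r_0>0$ small enough so that $B(x,r_0)\subset U$ and, for every $0<r<r_0$, there exists $\rho=\rho(r)>0$ with $\overline{B(0,\rho)}\subset \phi\bigl(B(x,r)\bigr)$. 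Define then
$$
\mathcal O_r := \phi^{-1}\bigl(B(0,\rho)\bigr) \;\subset\; B(x,r).
$$
By construction $\mathcal O_r$ is a connected open neighborhood of $x$ in $B(x,r)$, since it is the image of a (connected) Euclidean ball by the homeomorphism $\phi^{-1}$.

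Next, observe that
$$
\mathcal O_r \cap \{W < W(x)\} \;=\; \phi^{-1}\bigl(B(0,\rho) \cap \{Q<0\}\bigr),
$$
so it suffices to count the connected components of $B(0,\rho) \cap \{Q<0\}$. The set $\{Q<0\}$ is the open double cone $\{y_1^2 > y_2^2+\cdots+y_d^2\}$, whose two connected components are distinguished by the sign of $y_1$ (the hyperplane $\{y_1=0\}$ separates them). Intersecting with $B(0,\rho)$ preserves this separation and each of the two pieces remains connected (each is star-shaped with respect to the point $\pm(\rho/2)e_1$, for instance). Since $\phi$ is a homeomorphism, the two components are mapped bijectively onto the connected components of $\mathcal O_r \cap \{W<W(x)\}$, which therefore has exactly two.

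There is no real obstacle here: everything is driven by the Morse Lemma, and the only care needed is to ensure that the sublevel-set neighborhood $\mathcal O_r$ is chosen inside $B(x,r)$, which is guaranteed by taking $\rho(r)$ small enough relative to the modulus of continuity of $\phi^{-1}$ at $0$.
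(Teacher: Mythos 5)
Your proof is correct and follows precisely the route the paper indicates: the authors state that this lemma is "an easy consequence of the Morse Lemma" and refer to another reference for details, and your argument is exactly that consequence, reducing via the Morse chart to the model quadratic $Q(y)=-y_1^2+|y'|^2$ of signature $(1,d-1)$ and observing that $\{Q<0\}\cap B(0,\rho)$ is a truncated double cone with two star-shaped, hence connected, components.
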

\vskip -0.4cm
\hip
It motivates the following definition:
\begin{defi}\label{ssv}
\begin{enumerate}
\item We say that $x\in \mathcal U^{(1)}$ is a separating saddle point and we denote $x\in \mathcal V^{(1)}$ if for every $r>0$ small enough, the two CCs of $\mathcal O_r\cap \{W< W(x)\}$ are contained in different CCs of $\{W< W(x)\}$.
\item We say that $\sigma \in \R$ is a separating saddle value if $\sigma \in W(\mathcal V^{(1)})$.
%\item Finally, we say that a set $E\subset \R^{d'}$ is critical if there exists $\sigma \in Y(\mathcal V^{(1),Y})$ such that $E$ is a CC of $\{Y< \sigma\}$ satisfying $\partial E \cap \mathcal V^{(1),Y}\neq \emptyset$.
\end{enumerate}
\end{defi}

It is known (see for instance \cite{me}, Lemma 3.4) that $\mathcal V^{(1)}\neq \emptyset$. Let us then denote $\sigma_2>\dots>\sigma_N$ where $N\geq 2$ the different separating saddle values of $W$ and for convenience we set $\sigma_1=+\infty$.
For $\sigma \in \R\cup\{+\infty\}$, let us denote $\mathcal C_\sigma$ the set of all the CCs of $\{W<\sigma\}$.
We call \emph{labeling} of the minima of $W$ any injection $\mathcal U^{(0)}\to \llbracket 1,N \rrbracket \times \N^*$ which we denote for shortness $(\mathbf m_ {k,j})_{k,j}$.
Given a labeling $(\mathbf m_{k,j})_{k,j}$ of the minima of $W$, we denote for $k\in \llbracket 1,N \rrbracket$ 
$$\mathtt U^{(0)}_k=\big\{\mathbf m_{k',j}\,;\, 1\leq k'\leq k \, , \, j\in \N^*\big\}\cap \big\{W<\sigma_k\big\}$$
and we say that the labeling is \emph{adapted} to the separating saddle values if for all $k\in \llbracket 1,N \rrbracket$, each $\mathbf m_{k,j}$ is a global minimum of $W$ restricted to some CC of $\{W<\sigma_k\}$ and the map 
\begin{align}\label{TkY}
T_k: \mathtt U^{(0)}_k \to \mathcal C_ {\sigma_k}
\end{align}
sending $\mathbf m \in \mathtt U^{(0)}_k$ on the element of $\mathcal C_ {\sigma_k}$ to which it belongs is bijective.
In particular, it implies that each $\mathbf m_{k,j}$ belongs to $\mathtt U^{(0)}_k$.
Such labelings exist, one can for instance easily check that the usual labeling procedure presented in \cite{HHS11} is adapted to the separating saddle values.
From now on, we fix a labeling $(\mathbf m_{k,j})_ {k,j}$ adapted to the separating saddle values of $W$.

\begin{defi}\label{j et s}
Recall the notation \eqref{UkY} and Definition \ref{ssv}.
We define the following mappings:
\begin{enumerate}[label=\textbullet]
\item $E: \mathcal U^{(0)}\xrightarrow{{}\quad{}} \mathcal P(\R^{d})$\\
	${} \; \mathbf m_{k,j} \xmapsto{{}\quad{}} T_k(\mathbf m_{k,j})$\\
where $T_k$ is the map defined in \eqref{TkY}.
\item 
$\mathbf j:\mathcal U^{(0)}\to \mathcal P\big(\mathcal V^{(1)}\cup \{\mathbf s_1\}\big)$\\
given by $\mathbf j(\mathbf m_{1,1})=\{\mathbf s_1\}$ where $\mathbf s_1$ is a fictive saddle point such that $W(\mathbf s_1)=\sigma_1=+\infty$; and for $2\leq k\leq N$, $\mathbf j(\mathbf m_{k,j})=\partial E(\mathbf m_{k,j})\cap \mathcal V^{(1)}$ which is not empty (see for instance Lemma 3.5 from \cite{me}) and included in $\{W=\sigma_k\}$.
\item
$\boldsymbol \sigma:\mathcal U^{(0)}\to W(\mathcal V^{(1)})\cup \{\sigma_1\}$\\
${} \quad \mathbf m \mapsto W(\mathbf j(\mathbf m))$\\
where we allow ourselves to identify the set $W(\mathbf j(\mathbf m))$ and its unique element in $W(\mathcal V^{(1)})\cup \{\sigma_1\}$.
\item
$S: \mathcal U^{(0)}\xrightarrow{{}\quad{}} ]0,+\infty]$\\
	${} \quad \mathbf m \xmapsto{{}\quad{}} \boldsymbol \sigma(\mathbf m)-W(\mathbf m)$.
\end{enumerate}
\end{defi}
\hip
We now introduce some material from \cite{michel}.
Let us denote
$$\underline{\mathbf m}=\mathbf m_{1,1} \qquad \text{and} \qquad \underline{\mathcal U}^{(0)}=\mathcal U^{(0)}\backslash \{\underline{\mathbf m}\}$$
and define for $\mathbf m=\mathbf m_{k,j}\in \underline{\mathcal U}^{(0)}$
$$\widehat{\mathbf m}=T^{-1}_{k-1}\big(E_-(\mathbf m)\big)$$
where $E_-(\mathbf m)$ is the element of $\mathcal C_ {\sigma_{k-1}}$ containing $\mathbf m$.
Since $\widehat{\mathbf m}$ and $\mathbf m$ both belong to $E_-(\mathbf m)$, we have $W(\widehat{\mathbf m})\leq W(\mathbf m)$ and $\widehat{\mathbf m}\in \mathtt U^{(0)}_k$.
\begin{defi}
\begin{enumerate}[label=\textbullet]
\item A minimum $\mathbf m\in \underline{\mathcal U}^{(0)}$ is said to be of type I if $W(\widehat{\mathbf m})< W(\mathbf m)$.
Otherwise (i.e when $W(\widehat{\mathbf m})= W(\mathbf m)$), we say that $\mathbf m$ is of type II.
\item We define an equivalence relation $\mathcal R$ on $\underline{\mathcal U}^{(0)}$ by $\mathbf m \mathcal R \mathbf m'$ if and only if the two following conditions are satisfied:
	\begin{enumerate}[label=\roman*)]
	\item $\boldsymbol \sigma (\mathbf m)=\boldsymbol \sigma (\mathbf m')=\sigma_k$
	\item There exist some minima $\mathbf m^1, \dots , \mathbf m^K$ such that $\mathbf m^1=\mathbf m$, $\mathbf m^K=\mathbf m'$ and for all $1\leq n\leq K-1$, we have $\overline{T_k(\mathbf m^n)}\cap \overline{T_k(\mathbf m^{n+1})}\neq \emptyset$ with 
		$$\mathbf m^n\in \{\mathbf m_{k,j} \, ; \, j\in \N^*\}\cup\{\widehat{\mathbf m}_{k,j} \, ; \, j\in \N^* \text{ \emph{and} } \mathbf m_{k,j} \text{ \emph{is of type II}}\}.$$  
	\end{enumerate}
\end{enumerate}
\end{defi}
\hip
We denote the associated equivalence classes $(\mathcal U^{(0)}_\alpha)_{\alpha\in \mathcal A}$, where $\mathcal A$ is a finite set. 
It is shown in \cite{michel} (Proposition 2.6) that for $\mathbf m\mathcal R \mathbf m'$, we have $\boldsymbol\sigma (\mathbf m)=\boldsymbol\sigma (\mathbf m')$ and $\widehat{\mathbf m}=\widehat{\mathbf m}'$.
Finally, for $\mathbf m \in \mathcal U^{(0)}_\alpha$, we put
$$\boldsymbol \sigma(\alpha)=\boldsymbol \sigma(\mathbf m) \qquad \text{and} \qquad   \widehat{\mathcal U}^{(0)}_\alpha=\mathcal U^{(0)}_\alpha \cup \{\widehat{\mathbf m}\}$$ %for $\mathbf m=\mathbf m_{k,j} \in \underline{\mathcal U}^{(0)}$
which do not depend on the choice of $\mathbf m \in \mathcal U^{(0)}_\alpha$, as well as
$$E^\alpha(\mathbf m)=T_{k(\alpha)}(\mathbf m) \qquad \text{and} \qquad   \mathbf j^\alpha(\mathbf m)=\partial E^\alpha(\mathbf m)\cap \mathcal V^{(1)}$$
for $\mathbf m \in \widehat{\mathcal U}^{(0)}_\alpha$, where $k(\alpha)$ is such that $\boldsymbol \sigma(\alpha)=\sigma_{k(\alpha)}$.
When $\mathbf m \in \mathcal U^{(0)}_\alpha$, we have $E^\alpha(\mathbf m)=E(\mathbf m)$ but for $\widehat{\mathbf m} \in \widehat{\mathcal U}^{(0)}_\alpha\cap \mathcal U^{(0)}_{\alpha'}$, we have $E(\widehat{\mathbf m})=E^{\alpha'}(\widehat{\mathbf m})\neq E^{\alpha}(\widehat{\mathbf m})$.

\section{Gaussian quasimodes}\label{sectquasim}

Throughout the paper, for $d'\in \N^*$, $\Omega \subseteq \R^{d'}$ and $a \in \mathcal C^{\infty}(\Omega)$ a function depending on $h$ and such that for all $\beta\in \N^{d'}$ we have $\partial^\beta a =O_{L^\infty}(1)$, we will say that $a \in \mathcal C^{\infty}(\Omega)$ admits a classical expansion on $\Omega$ and denote $a \sim \sum_{j \geq 0}h^j a_j$, where  $(a_j)_{j \geq 0}\subset \mathcal C^{\infty}(\Omega)$ are independent of $h$, provided that for all $\beta \in \N^{d'}$ and $N \in \N$, there exists $C_{\beta, N}$ such that 
$$\Big\|\partial^\beta \Big(a-\sum_{j = 0}^{N-1}h^j a_j\Big)\Big\|_{\infty, \Omega}\leq C_{\beta, N} h^N.$$
It implies in particular that $\partial^\beta a_j=O_{L^\infty}(1)$.
%We will also say that $a \in \mathcal C^{\infty}(\Omega)$ admits a classical expansion on $\Omega$ and we will denote $a \sim \sum_{j \geq 0}h^j a_j$ if $a \sim_h \sum_{j \geq 0}h^j a_j$ and the $(a_j)$ are independent of $h$.
From now on, the letter $r$ will denote a small universal positive constant whose value may decrease as we progress in this paper (one can think of $r$ as $1/C$).
\hop

We are now going to introduce some quasimodes for the operator $P_h$.
With the notations from Hypothesis \ref{hyporw}, our approach consists in constructing some gaussian quasimodes for the factorised operator $\tilde P_h$ in the spirit of \cite{BonyLPMichel,me} and multiply those by $(a^h)^{-1}$.

Let $\alpha\in \mathcal A$ and $\mathbf m  \in \widehat{\mathcal U}^{(0)}_\alpha$. For each $\mathbf s \in \mathbf j^\alpha(\mathbf m)$ we introduce a function 
%$$w^{\mathbf s,h}(x,v)=A_h^{-1}\int_0^{\ell^{\mathbf s,\mathbf m}(x,v)}\zeta(s)\e^{-s^2/2h}\D s.$$
%Here 
$\ell^{\mathbf s,\mathbf m}$ that will appear in our quasimodes.
Note that thanks to Lemma \ref{1.4}, there are at most two functions $\ell^{\mathbf s,\mathbf m}$ and $\ell^{\mathbf s,\mathbf m'}$ associated to a saddle point $\mathbf s \in \mathcal V^{(1)}$.
Our goal will be to find some functions $\ell^{\mathbf s,\mathbf m}$ such that our quasimodes are the most accurate possible.
In order to begin the computations that will yield the equations that the function $\ell^{\mathbf s,\mathbf m}$ should satisfy, we will for the moment assume that it satisfies the following:
\hop

\begin{minipage}{0.02\linewidth}
\begin{align}\label{hypol}
\,
\end{align}
\end{minipage}
\begin{minipage}{0.93\linewidth}
\begin{enumerate}[label=\alph*)]
\item $\ell^{\mathbf s,\mathbf m}$ is a smooth real valued function on $\R^{d}$ whose support is contained in $B(\mathbf s,3r)$ % where $\delta(r)>0$ can be chosen later 
\label{hypol1}
\item $\ell^{\mathbf s,\mathbf m}$ admits a classical expansion $\ell^{\mathbf s,\mathbf m}\sim \sum h^j\ell^{\mathbf s,\mathbf m}_j$ on $B(\mathbf s,2r)$ 
\item $\ell^{\mathbf s,\mathbf m}_0$ vanishes at $\mathbf s$ \label{hypoln-2}
\item $\mathbf s$ is a local minimum of the function $W+(\ell_0^{\mathbf s,\mathbf m})^2/2$ which is non degenerate \label{hypoln-1}%$\widetilde W$ that we will introduce in \eqref{ytilde}.
\item the functions $\theta^\alpha_{\mathbf m,h}$ (which depends on $\ell^{\mathbf s,\mathbf m}$) and $\chi_{\alpha}$ that we will introduce in \eqref{thetainte}-\eqref{chim} are such that $\theta^\alpha_{\mathbf m,h}$ is smooth on a neighborhood of supp $\chi_{\alpha}$. \label{hypoln}
\end{enumerate}
\end{minipage}
\hop
Once we will have found the desired function $\ell^{\mathbf s,\mathbf m}$, we will see in Proposition \ref{lexist} that these assumptions are actually satisfied.
Denote $\zeta \in \mathcal C^{\infty}_c(\R, [0,1])$ an even cut-off function supported in $[-\gamma,\gamma]$ that is equal to $1$ on $[-\gamma/2,\gamma/2]$ where $\gamma>0$ is a parameter to be fixed later and 
\begin{align}\label{approxa}
A_h=\frac12 \int_\R \zeta(s) \e^{-\frac{s^2}{2h}}\D s=\int_0^{\gamma} \zeta(s) \e^{-\frac{s^2}{2h}}\D s=\frac{\sqrt{\pi h}}{\sqrt 2}(1+O(\e^{-c/h}))\qquad \text{for some }c>0.
\end{align}
We now define for each $\mathbf m  \in \widehat{\mathcal U}_\alpha^{(0)}$ a function $\theta_{\mathbf m,h}^\alpha$ as follows: if $x \in B(\mathbf s,r)\cap \{|\ell_0^{\mathbf s,\mathbf m}|\leq 2\gamma\}$ for some $\mathbf s \in \mathbf j^\alpha(\mathbf m)$, 
\begin{align}\label{thetainte}
\theta^\alpha_{\mathbf m,h}(x)=\frac12 \Big(1+A_h^{-1}\int_0^{\ell^{\mathbf s,\mathbf m}(x)}\zeta(s)\e^{-s^2/2h}\D s\Big)
\end{align} 
%\color{red} avec la convention $\ell^{\mathbf s,\mathbf m}=\ell^{\mathbf s,\tilde{\mathbf m}}$ si $\tilde{\mathbf m} \in E^\alpha(\mathbf m)$.\color{black}
whereas we set 
\begin{align}\label{theta1}
\theta^\alpha_{\mathbf m,h}=1 \quad  \text{on } \Big(E^\alpha(\mathbf m)+B(0, \varepsilon) \Big)\backslash \Big(\bigsqcup_{\mathbf s\in\mathbf j^\alpha(\mathbf m)} \big(B(\mathbf s,r)\cap \{|\ell_0^{\mathbf s,\mathbf m}|\leq 2\gamma\}\big) \Big)
\end{align}
with $\varepsilon(r)>0$ to be fixed later and 
\begin{align}\label{theta0}
\theta_{\mathbf m,h}=0 \quad  \text{everywhere else.}
\end{align}
%Let us also define the cut-off $\widehat{\theta}_{\mathbf m,h}$.
%If $x \in B(\mathbf s,r)\cap \{|\widehat\ell^{\mathbf s,\mathbf m}|\leq 2\gamma\}$ for some $\mathbf s \in \widehat{\mathbf j}(\mathbf m)$, 
%\begin{align}\label{thetainte}
%\widehat \theta_{\mathbf m,h}(x)=\frac12 \Big(1+A_h^{-1}\int_0^{\widehat \ell^{\mathbf s,\mathbf m}(x)}\zeta(s)\e^{-s^2/2h}\D s\Big)
%\end{align} 
%whereas we set 
%\begin{align}\label{theta1}
%\widehat \theta_{\mathbf m,h}=1 \quad  \text{on } \Big(\widehat E(\mathbf m)+B(0, \varepsilon) \Big)\backslash \Big(\bigsqcup_{\mathbf s\in\widehat{\mathbf j}(\mathbf m)} \big(B(\mathbf s,r)\cap \{|\widehat \ell^{\mathbf s,\mathbf m}|\leq 2\gamma\}\big) \Big)
%\end{align}
%with $\varepsilon(r)>0$ to be fixed later and 
%\begin{align}\label{theta0}
%\widehat \theta_{\mathbf m,h}=0 \quad  \text{everywhere else.}
%\end{align}
Note that $\theta_{\mathbf m,h}^\alpha$ takes values in $[0,1]$ and that we have 
\begin{align}\label{supptheta}
\mathrm{supp}\, \theta^\alpha_{\mathbf m,h} \subseteq E^\alpha(\mathbf m)+B(0,\varepsilon') %\qquad \text{and} \qquad \mathrm{supp}\, \widehat \theta_{\mathbf m,h} \subseteq \widehat E(\mathbf m)+B(0,\varepsilon')
\end{align}
where $\varepsilon'=\max(\varepsilon,r)$.\\
Denote now $\Omega_\alpha$ the CC of $\{W\leq \boldsymbol \sigma(\alpha)\}$ containing $\widehat{\mathcal U}_\alpha^{(0)}$.
The CCs of $\{W\leq \boldsymbol \sigma(\alpha)\}$ are separated so for $\varepsilon>0$ small enough, there exists $\tilde \varepsilon>0$ such that 
$$\min\,\big\{W(x)\, ; \, \D \big (x,\Omega_\alpha\big)=\varepsilon\big\}=\boldsymbol \sigma(\alpha)+2 \tilde \varepsilon.$$
Thus the distance between $\{W \leq \boldsymbol \sigma(\alpha)+\tilde \varepsilon\}\cap \big(\Omega_\alpha+B(0,\varepsilon)\big)$ and $\partial \big(\Omega_\alpha+B(0,\varepsilon)\big)$ is positive and we can consider a cut-off function 
\begin{align}\label{chim}
\chi_\alpha\in \mathcal C^{\infty}_c(\R^{d},[0,1])
\end{align}
such that 
$$\chi_\alpha =1 \text{ on } \{W \leq \boldsymbol \sigma(\alpha)+\tilde \varepsilon\}\cap \big(\Omega_\alpha+B(0,\varepsilon)\big)$$
 and 
$$\mathrm{supp} \,\chi_\alpha \subset  \big(\Omega_\alpha+B(0,\varepsilon)\big).$$
To sum up, we have the following picture:

\begin{center}
\begin{tikzpicture}[scale=5]

\draw plot[samples=100,domain=1:2] (\x, {(\x-1)*sqrt(1-((\x-1))^2)}) 
		-- plot[samples=100,domain=2:1] (\x, {-(\x-1)*sqrt(1-((\x-1))^2)}) -- cycle ;
\draw plot[samples=100,domain=0:1] (\x, {\x*(1-\x)}) -- plot[samples=100,domain=1:0] (\x, {-\x*(1-\x)}) -- cycle ;
\draw plot[samples=100,domain=-1:0] (\x, {\x*sqrt(1-(\x)^2)}) -- plot[samples=100,domain=0:-1] (\x, {-\x*sqrt(1-(\x)^2)}) -- cycle ;

\begin{scope} [xshift=-6.5,scale=1.15]
\draw[densely dashed] plot[samples=100,domain=1.07:2] (\x, {(\x-1)*sqrt(1-((\x-1))^2)}) 
		-- plot[samples=100,domain=2:1.07] (\x, {-(\x-1)*sqrt(1-((\x-1))^2)}) ;
\end{scope}
\begin{scope} [xshift=-2,scale=1.15]
\draw[densely dashed] plot[samples=100,domain=0.07:0.93] (\x, {\x*(1-\x)});
\draw[densely dashed] plot[samples=100,domain=0.93:0.07] (\x, {-\x*(1-\x)});
\end{scope}
\begin{scope} [xshift=2.4,scale=1.15]
\draw[densely dashed] plot[samples=100,domain=-0.07:-1] (\x, {\x*sqrt(1-(\x)^2)}) -- plot[samples=100,domain=-1:-0.07] (\x, {-\x*sqrt(1-(\x)^2)});
\fill[pattern={Lines[angle=45,distance=5pt]},pattern color=blue,opacity=0.7] (-0.02,0) -- plot[samples=100,domain=-0.07:-1] (\x, {\x*sqrt(1-(\x)^2)}) -- plot[samples=100,domain=-1:-0.07] (\x, {-\x*sqrt(1-(\x)^2)}) -- cycle;
\end{scope}

%\draw[densely dashed] plot[samples=100,domain=1:1.9] (\x, {(\x-1)*sqrt(1-((\x-1))^2)+0.1})
%		-- plot[samples=100,domain=1.85:2] (\x+0.1, {(\x-1)*sqrt(1-((\x-1))^2)}) -- plot[samples=100,domain=2:1.85] (\x+0.1, {-(\x-1)*sqrt(1-((\x-1))^2)}) -- plot[samples=100,domain=1.9:1] (\x, {-(\x-1)*sqrt(1-((\x-1))^2)-0.1}) ;
%\draw[densely dashed] plot[samples=100,domain=0:1] (\x, {\x*(1-\x)+0.1});
%\draw[densely dashed] plot[samples=100,domain=1:0] (\x, {-\x*(1-\x)-0.1});
%\draw[densely dashed] plot[samples=100,domain=0:-0.9] (\x, {\x*sqrt(1-(\x)^2)-0.1}) -- plot[samples=100,domain=-0.85:-1] (\x-0.1, {\x*sqrt(1-(\x)^2)}) --  plot[samples=100,domain=-1:-0.85] (\x-0.1, {-\x*sqrt(1-(\x)^2)}) -- plot[samples=100,domain=-0.9:0] (\x, {-\x*sqrt(1-(\x)^2)+0.1}) ;
%\draw[densely dashed,blue]  (0,0.1) -- (0.1,0) -- (0,-0.1);

\coordinate (ne) at (0.5*0.4+0.03,{0.5*sqrt(1-(0.4)^2)});
\coordinate (nw) at (-0.5*0.4,{0.5*sqrt(1-(0.4)^2)});
\coordinate (se) at (0.5*0.4,{-0.5*sqrt(1-(0.4)^2)});
\coordinate (sw) at (-0.5*0.4-0.03,{-0.5*sqrt(1-(0.4)^2)});
\filldraw[red,fill opacity=0.5] (ne) arc (1.16 r:{(pi-1.16) r}:0.5) -- (sw) arc ({-(pi-1.16) r}:-1.16 r:0.5) -- cycle;

\draw (0,0) node {$\bullet$};
\draw (-0.09,0) node[scale=0.8] {$\mathbf j^\alpha(\mathbf m)$};
\draw (-0.8,0) node {$\bullet$};
\draw (-0.75,-0.02) node[scale=0.9] {$\mathbf m$};
\draw (-0.55,0.2) node[scale=0.9] {$\theta^\alpha_{\mathbf m,h}=1$};
\draw (0.55,0.05) node[scale=0.9] {$\theta^\alpha_{\mathbf m,h}=0$};
\draw (1.4,-0.08) node[scale=1.2] {$\Omega_\alpha$};
\draw (0.75,0.27) node[scale=0.9,rotate=-19] {supp $\chi_\alpha$};

\draw[>=stealth,<-] (0,-0.3) arc ({(-0.7*pi) r}:{(-0.55*pi) r}:1.2) node[right] {$\theta^\alpha_{\mathbf m,h}$ given by \eqref{thetainte}};

\end{tikzpicture}
\end{center}
We also denote 
$$W_{\mathbf m}(x)=W(x)-W(\mathbf m)$$ 
and it is clear that 
%$W_{\mathbf m}=
\begin{align}\label{suppchi}
W_{\mathbf m}\geq S(\mathbf m)+\tilde \varepsilon \qquad \text{on the support of } \nabla \chi_\alpha \text{ as soon as }\mathbf m\in \mathcal U^{(0)}_\alpha.
\end{align}
Recalling the function $a^h$ from Hypothesis \ref{hyporw}, the global quasimode associated to the eigenvalue $0$ is
\begin{align}\label{noyau}
f_{\underline{\mathbf m},h}(x)=(a^h)^{-1}h^{-d/4}c_h(\underline{\mathbf m})\e^{-W_{\underline{\mathbf m}}(x)/h}\in \mathrm{Ker}\, P_h
\end{align}
which is an exact one, while for $\mathbf m \in \mathcal U_\alpha^{(0)}$, our quasimodes will be linear combinations of the functions
$$(a^h)^{-1}h^{-d/4} c^\alpha_h(\tilde{\mathbf m}) \chi_\alpha(x) \theta^\alpha_{\tilde{\mathbf m},h}(x) \e^{-W_{\tilde{\mathbf m}}(x)/h}$$
where $\tilde{\mathbf m}\in \widehat{\mathcal U}_\alpha^{(0)}$.
Here $c^\alpha_h(\mathbf m)$ and $c_h(\underline{\mathbf m})$ are normalization factors assuring that
\begin{align}\label{renorm}
\text{ }\quad \;\|(a^h)^{-1}h^{-d/4} c^\alpha_h(\tilde{\mathbf m}) \chi_\alpha(x) \theta^\alpha_{\tilde{\mathbf m},h}(x) \e^{-W_{\tilde{\mathbf m}}(x)/h}\|=1 \quad \; \text{and}\quad \; \|(a^h)^{-1}h^{-d/4}c_h(\underline{\mathbf m})\e^{-W_{\underline{\mathbf m}}(x)/h}\|=1.
\end{align}
In particular, thanks to Hypothesis \ref{hyporw} we have that for all $\mathbf m\in \widehat{\mathcal U}_\alpha^{(0)}$, the constant $c^\alpha_h(\mathbf m)$ (resp. the constant $c_h(\underline{\mathbf m})$) admits an asymptotic expansion whose first term is
\begin{align}\label{expch}
\pi^{-d/4}\Big(\sum_{\tilde{\mathbf m} \in H^\alpha(\mathbf m)} \det\mathcal W_{\tilde{\mathbf m}}^{-1/2}\Big)^{-1/2}\qquad \text{resp.} \quad \pi^{-d/4}\Big(\sum_{\tilde{\mathbf m} \in  H(\underline{\mathbf m})} \det\mathcal W_{\tilde{\mathbf m}}^{-1/2}\Big)^{-1/2}
\end{align}
with $H^\alpha(\mathbf m)=\{\tilde{\mathbf m}\in \mathcal U^{(0)}\cap E^\alpha(\mathbf m)\, ; \, W(\tilde{\mathbf m})=W(\mathbf m)\}$, $ H(\underline{\mathbf m})=\{\tilde{\mathbf m}\in \mathcal U^{(0)}\, ; \, W(\tilde{\mathbf m})=W(\underline{\mathbf m})\}$ and
\begin{align}\label{hessiennes}
\mathcal W_x\text{ is the Hessian of } W \text{ at }x.
\end{align}
Let us introduce the coefficients that we will use to define our quasimodes, in the spirit of \cite{michel}.
We denote $\mathcal F_\alpha$ the finite-dimensional vector space of functions from $\widehat{\mathcal U}_\alpha^{(0)}$ into $\R$ endowed with the natural euclidian structure
$$\langle \varphi , \varphi' \rangle_{\mathcal F_\alpha}=\sum_{\tilde{\mathbf m}\in\widehat{\mathcal U}_\alpha^{(0)}}\varphi (\tilde{\mathbf m}) \varphi'(\tilde{\mathbf m}).$$
Denoting also
\begin{align}\label{u01}
\mathcal U^{(0),I}_\alpha \text{ the elements of } \mathcal U^{(0)}_\alpha \text{ of type I}
\end{align}
and using \eqref{expch}, the following is established in \cite{michel}, Lemma 3.6 and below.
\begin{lem}\label{phiexist}
Recall the notation \eqref{hessiennes}.
One can construct an $h$-dependent orthonormal family $(\varphi^\alpha_{\mathbf m})_{\mathbf m \in \widehat{\mathcal U}_\alpha^{(0)}}\subset \mathcal F_\alpha$ such that
\begin{enumerate}[label=\alph*)]
\item $\varphi^\alpha_{\widehat{\mathbf m}}(\tilde{\mathbf m})=\widehat{c}_h c^\alpha_h(\tilde{\mathbf m})^{-1} \1_{\widehat{\mathcal U}^{(0)}_\alpha \backslash \mathcal U^{(0),I}_\alpha }(\tilde{\mathbf m})$ where $\widehat{c}_h$ is a normalization constant such that $\|\varphi^\alpha_{\widehat{\mathbf m}}\|_{\mathcal F_\alpha}=1$. \label{phimchapeau}
\item If $\{\mathbf m, \tilde{\mathbf m}\}\cap \mathcal U^{(0),I}_\alpha \neq \emptyset $, then $\varphi^\alpha_{\mathbf m}(\tilde{\mathbf m})=\delta_{\mathbf m,\tilde{\mathbf m}}$.\label{phidelta}
\item If $\varphi^\alpha_{\mathbf m}(\tilde{\mathbf m})\neq 0$, then $W(\mathbf m)=W(\tilde{\mathbf m})$.\label{phinonnul}
\item Each $\varphi^\alpha_{\mathbf m}$ admits an asymptotic expansion.
%Moreover, for all $\mathbf m \in \mathcal U_\alpha^{(0)}$, the leading term of $\varphi^\alpha_{\mathbf m}$ is orthogonal to the one
The leading term of $\varphi^\alpha_{\widehat{\mathbf m}}$ is given by
	$$\varphi^{\alpha,0}_{\widehat{\mathbf m}}(\tilde{\mathbf m})=\widehat{c}_0 c^\alpha_0(\tilde{\mathbf m})^{-1} \1_{\widehat{\mathcal U}^{(0)}_\alpha \backslash \mathcal U^{(0),I}_\alpha }(\tilde{\mathbf m})=\Bigg(\frac{\sum_{\mathring{\mathbf m} \in H^\alpha(\tilde{\mathbf m})} \det\mathcal W_{\mathring{\mathbf m}}^{-1/2}}{\sum_{\mathbf m' \in \widehat{\mathcal U}^{(0)}_\alpha \backslash \mathcal U^{(0),I}}\sum_{\mathring{\mathbf m} \in H^\alpha(\mathbf m')} \det\mathcal W_{\mathring{\mathbf m}}^{-1/2}}\Bigg)^{1/2} \1_{\widehat{\mathcal U}^{(0)}_\alpha \backslash \mathcal U^{(0),I}_\alpha }(\tilde{\mathbf m}).$$ 
Finally, for all $\mathbf m \in \mathcal U_\alpha^{(0)}$, the leading term of $\varphi^\alpha_{\mathbf m}$ can be computed explicitly and is orthogonal to the one of $\varphi^\alpha_{\widehat{\mathbf m}}$. \label{expphi}
\end{enumerate}
\end{lem}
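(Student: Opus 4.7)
The plan is to treat the construction as essentially forced by \ref{phimchapeau} and \ref{phidelta}, up to a unitary choice on a complementary subspace. First, \ref{phidelta} dictates that $\varphi^\alpha_{\mathbf m}=\delta_{\mathbf m}$ for every $\mathbf m\in \mathcal U^{(0),I}_\alpha$, and also forces every other $\varphi^\alpha_{\mathbf m'}$ to vanish on $\mathcal U^{(0),I}_\alpha$. This already supplies orthonormal vectors for type I indices and reduces the problem to constructing an orthonormal basis of the subspace $\mathcal G_\alpha\subset \mathcal F_\alpha$ of functions supported on $\widehat{\mathcal U}^{(0)}_\alpha\setminus \mathcal U^{(0),I}_\alpha$, with one distinguished unit vector $\varphi^\alpha_{\widehat{\mathbf m}}$ prescribed by \ref{phimchapeau}.

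I then define $\varphi^\alpha_{\widehat{\mathbf m}}(\tilde{\mathbf m})=\widehat c_h\, c^\alpha_h(\tilde{\mathbf m})^{-1}\1_{\widehat{\mathcal U}^{(0)}_\alpha\setminus \mathcal U^{(0),I}_\alpha}(\tilde{\mathbf m})$, choosing $\widehat c_h>0$ so that $\|\varphi^\alpha_{\widehat{\mathbf m}}\|_{\mathcal F_\alpha}=1$. The expansion \eqref{expch} supplies a classical expansion of $c^\alpha_h(\tilde{\mathbf m})$ with strictly positive leading term, hence of $c^\alpha_h(\tilde{\mathbf m})^{-1}$ and of $\widehat c_h$; substituting the leading term of \eqref{expch} into the normalization identity yields the explicit formula for $\varphi^{\alpha,0}_{\widehat{\mathbf m}}$ stated in \ref{expphi}.

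To complete the family I fix, once and for all, an $h$-independent orthonormal basis $(\psi_{\mathbf m})_{\mathbf m\in \mathcal U^{(0)}_\alpha\setminus \mathcal U^{(0),I}_\alpha}$ of the orthogonal complement of $\varphi^{\alpha,0}_{\widehat{\mathbf m}}$ inside $\mathcal G_\alpha$, whose dimension is exactly $|\mathcal U^{(0)}_\alpha\setminus \mathcal U^{(0),I}_\alpha|$. Each $\psi_{\mathbf m}$ is then corrected into $\varphi^\alpha_{\mathbf m}$ by Gram--Schmidt against $\varphi^\alpha_{\widehat{\mathbf m}}$ and the previously corrected vectors, order by order in $h$; since the defect $\varphi^\alpha_{\widehat{\mathbf m}}-\varphi^{\alpha,0}_{\widehat{\mathbf m}}$ is $O(h)$ in $\mathcal F_\alpha$ and the leading-order Gram matrix is the identity, this iterative procedure determines all coefficients of a classical expansion. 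The orthogonality of the leading terms asserted in \ref{expphi} is then automatic from $\langle \varphi^\alpha_{\mathbf m},\varphi^\alpha_{\widehat{\mathbf m}}\rangle_{\mathcal F_\alpha}=0$ holding identically in $h$, picking off the $h^0$ coefficient.

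Property \ref{phinonnul} requires no further work: for $\mathbf m\in \mathcal U^{(0),I}_\alpha$ it is trivial, and otherwise $\varphi^\alpha_{\mathbf m}$ is supported on $\widehat{\mathcal U}^{(0)}_\alpha\setminus \mathcal U^{(0),I}_\alpha$, whose elements all share the common value $W(\widehat{\mathbf m})$ by the very definition of type II, so that $W(\tilde{\mathbf m})=W(\widehat{\mathbf m})=W(\mathbf m)$ whenever $\varphi^\alpha_{\mathbf m}(\tilde{\mathbf m})\neq 0$. The only genuine obstacle in the construction is the preservation of the classical expansion through the Gram--Schmidt step, but this is a standard perturbative fact resting on the invertibility, smoothness and symmetry of the relevant Gram matrix near the identity.
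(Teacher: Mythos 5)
The paper does not actually prove Lemma~\ref{phiexist}; it states that the construction ``is established in \cite{michel}, Lemma 3.6 and below,'' so there is no in-paper argument to compare against word for word. Your reconstruction is nevertheless correct and consistent with all four claimed properties, and it matches the structure one expects from that reference. The key observations you make are the right ones: items~\ref{phimchapeau} and~\ref{phidelta} force $\varphi^\alpha_{\mathbf m}=\delta_{\mathbf m}$ for type~I indices and confine the remaining vectors to the subspace $\mathcal G_\alpha$ of functions supported on $\widehat{\mathcal U}^{(0)}_\alpha\setminus\mathcal U^{(0),I}_\alpha$; the formula in~\ref{expphi} is a direct consequence of the normalization identity for $\widehat c_h$ combined with the leading term of~\eqref{expch}; property~\ref{phinonnul} is automatic because every element of $\widehat{\mathcal U}^{(0)}_\alpha\setminus\mathcal U^{(0),I}_\alpha$ has the common level $W(\widehat{\mathbf m})$ by definition of type~II; and Gram--Schmidt against a fixed $h$-independent orthonormal basis of $\varphi^{\alpha,0}_{\widehat{\mathbf m}}{}^\perp\cap\mathcal G_\alpha$ produces the remaining vectors with classical expansions since the initial Gram matrix is $I+O(h)$.

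Two small remarks. First, it is worth noting explicitly that the Gram--Schmidt combinations stay supported on $\widehat{\mathcal U}^{(0)}_\alpha\setminus\mathcal U^{(0),I}_\alpha$ (both $\varphi^\alpha_{\widehat{\mathbf m}}$ and the $\psi_{\mathbf m}$ live there, and the procedure only forms linear combinations), which is what makes~\ref{phidelta} and~\ref{phinonnul} hold for type~II indices as well. Second, the clause in~\ref{expphi} that ``the leading term of $\varphi^\alpha_{\mathbf m}$ can be computed explicitly'' is satisfied rather trivially in your construction (it is whatever $\psi_{\mathbf m}$ you fixed); in \cite{michel} the complementary basis is presumably tied to the combinatorics of the class $\mathcal U^{(0)}_\alpha$ to make the later computation of $M^\alpha_h$ tractable, but the lemma as stated does not require a specific canonical choice, so no gap results.
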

\hip
For $\mathbf m \in \mathcal U_\alpha^{(0)}$, our quasimodes will be the functions 
\begin{align}\label{quasim}
f_{\mathbf m,h}(x)=(a^h)^{-1}h^{-d/4}\chi_\alpha(x)\bigg(\sum_{\tilde{\mathbf m}\in \widehat{\mathcal U}_\alpha^{(0)}} \varphi^\alpha_{\mathbf m}(\tilde{\mathbf m}) c^\alpha_h(\tilde{\mathbf m}) \theta^\alpha_{\tilde{\mathbf m},h}(x)\bigg)  \e^{-W_{\mathbf m}(x)/h}.%\in \mathcal C^{\infty}_c(\R^{d})
\end{align}
Note that $f_{\mathbf m,h}$ belongs to $\mathcal C^\infty_c(\R^{d})$ thanks to item \ref{hypoln} from Hypothesis \ref{hypol} and that
\begin{align}\label{suppf}
\mathrm{supp}\,f_{\mathbf m,h}\subseteq E_-(\mathbf m)
\end{align}
thanks to \eqref{supptheta}.

\section{Orthogonality}

The goal of this section is to show that the family of quasimodes that we introduced in \eqref{quasim} and \eqref{noyau} is almost orthonormal.
This result was already established in \cite{BonyLPMichel} in the case where $W$ has no type II minimum (see Remark 6.3 from \cite{BonyLPMichel}). Therefore, we will consider $\mathbf m\in \mathcal U_\alpha^{(0)}$, $\mathbf m'\in \mathcal U_{\alpha'}^{(0)}$ and we will study here the orthogonality of the quasimodes $f_{\mathbf m,h}$ and $f_{\mathbf m',h}$ with $\mathbf m$ or $\mathbf m'$ (or both) a type II minimum.
We follow the spirit of \cite{michel} (Proposition 3.10) and adapt it to the gaussian quasimodes framework.
\hop
\textbullet The case where $\mathbf m \mathcal R \mathbf m'$ and one of them is of type I, say $\mathbf m$ (in particular $\mathbf m \neq \mathbf m'$ because of our assumption).
\begin{center}%rel1type1
\begin{tikzpicture}
\begin{scope} [scale=0.5]
\draw [samples=100,domain=-15.1:7.02] plot (\x,{-cos(\x r)*(1+(\x/6)^2)});
\draw [samples=100,domain=7.02:8.5] plot (\x,{-cos(\x r)*(1+(\x/6)^2)+6*exp(-1/(\x-7)^2)});
\draw (0,-1) node[below] {$\mathbf m$};
\draw (0,-1) node {$\times$};
\draw (-6.45,-2.1) node {$\times$};
\draw (-6.45,-2.1) node[below] {$\widehat{\mathbf m}$};
\draw (6.45,-2.1) node {$\times$};
\draw (6.45,-2.1) node[below] {$\mathbf m'$};
\draw [>=stealth, <->] (-3.3,1.4)--(3.3,1.4);
\draw (0,1.4) node[above] {$E(\mathbf m)$};
\end{scope}
\end{tikzpicture}
\end{center}
\hip
In that case, item \ref{phidelta} from Lemma \ref{phiexist} implies $\varphi_{\mathbf m}(\tilde{\mathbf m})=\delta_{\mathbf m, \tilde{\mathbf m}}$ for all $\tilde{\mathbf m}\in \widehat{\mathcal U}_\alpha^{(0)}$ and $\varphi_{\mathbf m'}(\mathbf m)= 0$ so by \eqref{supptheta}, we have
$$\mathrm{supp} \, f_{\mathbf m,h}\subseteq E(\mathbf m)+B(0,\varepsilon ') \qquad \text{and} \qquad \mathrm{supp} \,f_{\mathbf m',h}\subseteq \big(\R^d\backslash E(\mathbf m)\big)+B(0,\varepsilon ')$$
and hence
$$\mathrm{supp} \,f_{\mathbf m,h} \cap \mathrm{supp} \,f_{\mathbf m',h}\subseteq \big\{2W-W(\mathbf m)-W(\mathbf m')\geq c>0\big\}.$$
Consequently, $\langle f_{\mathbf m,h},  f_{\mathbf m',h} \rangle = O(\e^{-c/h})$.
\hop
\textbullet The case where $\mathbf m \mathcal R \mathbf m'$ (i.e $\alpha=\alpha'$) and both minima are of type II. 
\begin{center}%rel2type2 
\begin{tikzpicture}
\begin{scope} [scale=0.5]
\draw [samples=100,domain=-7.02:7.02] plot (\x,{-cos(\x r)});
\draw [samples=100,domain=-15.4:-7.02] plot (\x,{-cos(\x r)*(1+3.5*exp(-1/(\x+7)^2))});
\draw [samples=100,domain=7.02:8.5] plot (\x,{-cos(\x r)+5*exp(-1/(\x-7)^2)});
\draw (0,-1) node[below] {$\mathbf m$};
\draw (0,-1) node {$\times$};
\draw (2*pi,-1) node[below] {$\mathbf m'$};
\draw (2*pi,-1) node {$\times$};
\draw (-2*pi,-1) node[below] {$\widehat{\mathbf m}$};
\draw (-2*pi,-1) node {$\times$};
\draw [>=stealth, <->] (-pi,1.1)--(pi-0.02,1.1);
\draw (0,1) node[above] {$E(\mathbf m)$};
\draw [>=stealth, <->] (pi+0.02,1.1)--(2.5*pi,1.1);
\draw (1.8*pi,1) node[above] {$E(\mathbf m')$};
\end{scope}
\end{tikzpicture}
\end{center}
\hip
In that case, start by noticing that because of \eqref{supptheta}, if $\tilde{\mathbf m}$, $\tilde{\mathbf m}' \in \widehat{\mathcal U}_\alpha^{(0)}$ with $\tilde{\mathbf m}\neq \tilde{\mathbf m}'$, we have
$$\mathrm{supp}\, \theta^\alpha_{\tilde{\mathbf m},h} \cap \mathrm{supp}\, \theta^\alpha_{\tilde{\mathbf m}',h}\subseteq \{W_{\tilde{\mathbf m}}\geq c>0\}.$$
%and the same holds with $\widehat \theta_{\tilde{\mathbf m}',h}$ instead of $\theta_{\tilde{\mathbf m}',h}$.
Therefore, using the definitions of our quasimodes \eqref{quasim} as well as item \ref{phinonnul} from Lemma \ref{phiexist} and \eqref{renorm}, we compute
\begin{align}
\langle f_{\mathbf m,h} , f_{\mathbf m',h} \rangle +O(\e^{-c/h})&=(a^h)^{-2}h^{-d/2} \sum_{\tilde{\mathbf m}\in \widehat{\mathcal U}_\alpha^{(0)}} \varphi^\alpha_{\mathbf m}(\tilde{\mathbf m}) \varphi^\alpha_{\mathbf m'}(\tilde{\mathbf m}) c^\alpha_h(\tilde{\mathbf m})^2 \int_{\R^d} \chi_\alpha^2 (\theta^\alpha_{\tilde{\mathbf m},h})^2 \e^{-2W_{\tilde{\mathbf m}}/h} \D x\\
		%&\qquad \qquad+h^{-d/2}  \varphi_{\mathbf m}(\widehat{\mathbf m}) \varphi_{\mathbf m'}(\widehat{\mathbf m}) \widehat c_h(\mathbf m)^2 \int_{\R^d} \chi_\alpha^2 \widehat\theta_{\mathbf m,h}^2 \e^{-2W_{\widehat{\mathbf m}}/h} \D x\\
		&=\langle \varphi^\alpha_{\mathbf m} , \varphi^\alpha_{\mathbf m'} \rangle_{\mathcal F_\alpha} \\
		&=\delta_{\mathbf m, \mathbf m'}
\end{align}
by Lemma \ref{phiexist}.
\hop
\textbullet The case where $\mathbf m \cancel{\mathcal R} \mathbf m'$ and $\boldsymbol \sigma (\mathbf m)=\boldsymbol \sigma (\mathbf m')$.
\begin{center}%memesigma
\begin{tikzpicture}
\begin{scope} [scale=0.5]
\begin{scope} [yshift=-5.3,xshift=-368]
\draw [samples=100,domain=-1.2:3*pi] plot (\x,{-cos(\x r)*(1+(\x/7.5)^2)});
\draw [samples=100,domain=-3:-1.2] plot (\x,{-cos(\x r)+5*exp(-1/(\x+1.18)^2)});
\draw (0,-1) node[below] {$\mathbf m$};
\draw (0,-1) node {$\times$};
\draw (6.5,-1.7) node[below] {$\widehat{\mathbf m}$};
\draw (6.5,-1.7) node {$\times$};
\draw [>=stealth,<->] (-1.9,1.3)--(3.25,1.3);
\draw (0.6,1.3) node[above] {supp $f_{\mathbf m,h}$};
\end{scope}
\draw [samples=100,domain=-1:7.02] plot (\x,{-cos(\x r)});
\draw [samples=100,domain=-3.5:-1] plot (\x,{-cos(\x r)*(1+(\x/2+1/2)^2)});
\draw [samples=100,domain=7.02:8.5] plot (\x,{-cos(\x r)+5*exp(-1/(\x-7)^2)});
\draw (0,-1) node[below] {$\widehat{\mathbf m}'$};
\draw (0,-1) node {$\times$};
\draw (2*pi,-1) node[below] {$\mathbf m'$};
\draw (2*pi,-1) node {$\times$};
\draw [>=stealth,<->] (-2.3,1.05)--(2.5*pi,1.05);
\draw (pi,1.05) node[above] {supp $f_{\mathbf m',h}$};
\end{scope}
\end{tikzpicture}
\end{center}
\hip
In that case, it follows from the support properties of our quasimodes and the result of Proposition 3.8 from \cite{michel} that $\varepsilon'$ can be chosen small enough so that the supports of $f_{\mathbf m,h}$ and $f_{\mathbf m',h}$ do not intersect.
Thus, $\langle f_{\mathbf m,h},  f_{\mathbf m',h} \rangle = 0$.
\hop
\textbullet The case where $\mathbf m \cancel{\mathcal R} \mathbf m'$, $\boldsymbol \sigma (\mathbf m)> \boldsymbol \sigma (\mathbf m')$ and $W(\mathbf m)=W(\mathbf m')$.
\begin{center}%memehauteur
\begin{tikzpicture}
\begin{scope} [scale=0.5]
\draw [samples=100,domain=-7.02:7.02] plot (\x,{-cos(\x r)});
\draw [samples=100,domain=-15.4:-7.02] plot (\x,{-cos(\x r)*(1+3.5*exp(-1/(\x+7)^2))});
\draw [samples=100,domain=7.02:8.5] plot (\x,{-cos(\x r)+5.5*exp(-1/(\x-7)^2)});
\draw (0,-1) node[below] {$\mathbf m'$};
\draw (0,-1) node {$\times$};
\draw (-2*pi,-1) node[below] {$\mathbf m=\widehat{\mathbf m}'$};
\draw (-2*pi,-1) node {$\times$};
\draw [>=stealth, <->] (-pi,1.1)--(pi-0.02,1.1);
\draw (0,1) node[above] {$E(\mathbf m')$};
\draw [>=stealth, <->] (-3*pi,4.1)--(2.65*pi,4.1);
\draw (0,4.1) node[above] {$E(\mathbf m)$};
\end{scope}
\end{tikzpicture}
\end{center}
\hip
%In that situation, according to Proposition 3.8 from \cite{michel}, we can suppose that $\mathbf m' \in E(\mathbf m'')\sqcup \widehat E(\mathbf m)$ with $\mathbf m''\in \mathcal U^{(0)}$ such that $W(\mathbf m'')=W(\mathbf m)$ (otherwise the supports of $f_{\mathbf m,h}$ and $f_{\mathbf m',h}$ are disjoint and the result is obvious).
%Hence by assumption $ W(\mathbf m')=W(\mathbf m'')\leq W(\widehat{\mathbf m})$ so $\mathbf m'$ is of type II.
In that situation, thanks to \eqref{quasim}, \eqref{suppf} and \eqref{supptheta} we can suppose that
\begin{align}\label{fmsuppfm'}
f_{\mathbf m,h}=(a^h)^{-1}\tilde c_h(\mathbf m)h^{-d/4}\e^{-W_{\mathbf m}/h} \qquad \text{on the support of } f_{\mathbf m',h}, \text{ with } \tilde c_h(\mathbf m)=O(1)
\end{align}
(otherwise the supports of $f_{\mathbf m,h}$ and $f_{\mathbf m',h}$ are disjoint and the result is obvious) and consequently $\mathbf m'$ is of type II.
Hence, using a standard Laplace method, we can write 
\begin{align}
\langle f_{\mathbf m,h},  f_{\mathbf m',h} \rangle &= (a^h)^{-2} h^{-d/2} \tilde c_h(\mathbf m) \sum_{\tilde{\mathbf m}'\in \widehat{\mathcal U}_{\alpha'}^{(0)}} \varphi^{\alpha'}_{\mathbf m'}(\tilde{\mathbf m}') c^{\alpha'}_h(\tilde{\mathbf m}') \int_{\R^d} \chi_{\alpha'} \theta^{\alpha'}_{\tilde{\mathbf m}',h}\, \e^{-2W_{\mathbf m}/h} \D x\\
		%&\qquad +h^{-d/2} \tilde c_h(\mathbf m)  \varphi_{\mathbf m'}(\widehat{\mathbf m'}) \widehat c_h(\mathbf m') \int_{\R^d} \chi_{\mathbf m'} \widehat\theta_{\mathbf m',h}\, \e^{-2W_{\mathbf m}/h} \D x\\
		&= (a^h)^{-2} h^{-d/2} \tilde c_h(\mathbf m) \sum_{\tilde{\mathbf m}'\in \widehat{\mathcal U}_{\alpha'}^{(0)}} \varphi^{\alpha'}_{\mathbf m'}(\tilde{\mathbf m}') c^{\alpha'}_h(\tilde{\mathbf m}') \int_{\R^d} \chi_{\alpha'}^2 (\theta^{\alpha'}_{\tilde{\mathbf m}',h})^2\, \e^{-2W_{\mathbf m}/h} \D x+O(\e^{-c/h})\\
		&= \tilde c_h(\mathbf m) \sum_{\tilde{\mathbf m}'\in \widehat{\mathcal U}_{\alpha'}^{(0)}} \varphi^{\alpha'}_{\mathbf m'}(\tilde{\mathbf m}') c^{\alpha'}_h(\tilde{\mathbf m}')^{-1} +O(\e^{-c/h})\qquad \text{by \eqref{renorm}} \\
		&= \tilde c_h(\mathbf m) \sum_{\tilde{\mathbf m}'\in \widehat{\mathcal U}_{\alpha'}^{(0)}\backslash \mathcal U^{(0),I}_{\alpha'}} \varphi^{\alpha'}_{\mathbf m'}(\tilde{\mathbf m}') c^{\alpha'}_h(\tilde{\mathbf m}')^{-1} +O(\e^{-c/h})\qquad \text{by item \ref{phidelta} from Lemma \ref{phiexist}}\\
		&=\frac{\tilde c_h(\mathbf m)}{\widehat c_{h}} \langle \varphi^{\alpha'}_{\mathbf m'}, \varphi^{\alpha'}_{\widehat{\mathbf m}'} \rangle_{\mathcal F_{\alpha'}} +O(\e^{-c/h}) \qquad \text{by item \ref{phimchapeau} from Lemma \ref{phiexist}} \\
		&=O(\e^{-c/h})
\end{align}
where we also used the orthogonality of the family $(\varphi_{\mathbf m})_{\mathbf m\in \widehat{\mathcal U}_{\alpha'}^{(0)}}$.
\hop
\textbullet The case where $\mathbf m \cancel{\mathcal R} \mathbf m'$, $\boldsymbol \sigma (\mathbf m)> \boldsymbol \sigma (\mathbf m')$ and $W(\mathbf m)\neq W(\mathbf m')$.
\begin{center}%plubas
\begin{tikzpicture}
\begin{scope} [scale=0.5]
\draw [samples=100,domain=-1:7.02] plot (\x,{-cos(\x r)});
\draw [samples=100,domain=-15.4:-1] plot (\x,{-cos(\x r)*(1+0.8*ln(1+(1+\x)^2))});
\draw [samples=100,domain=7.02:8.5] plot (\x,{-cos(\x r)+5*exp(-1/(\x-7)^2)});
\draw (0,-1) node[below] {$\mathbf m'$};
\draw (0,-1) node {$\times$};
\draw (2*pi,-1) node[below] {$\widehat{\mathbf m}'$};
\draw (2*pi,-1) node {$\times$};
\draw [>=stealth, <->] (-2.2,1.05)--(2.5*pi,1.05);
\draw (pi,1.05) node[above] {supp $f_{\mathbf m',h}$};
\draw (-6.35,-3.7) node {$\times$};
\draw (-6.35,-3.7) node[below] {$\mathbf m$};
\end{scope}
\end{tikzpicture}
\end{center}
\hip
Here again we can suppose that %$\mathbf m' \in E(\mathbf m'')\sqcup \widehat E(\mathbf m)$ with $\mathbf m''\in \mathcal U^{(0)}$ such that $W(\mathbf m'')=W(\mathbf m)$.
\eqref{fmsuppfm'} holds true so 
%Thus, our assumption implies that 
$W(\mathbf m)<W(\mathbf m')$.
By item \ref{phinonnul} from Lemma \ref{phiexist}, we have $W\geq W(\mathbf m')$ on the support of $f_{\mathbf m',h}$.
Therefore, $W_{\mathbf m}\geq c>0$ on the support of $f_{\mathbf m',h}$ and since $f_{\mathbf m',h}=O_{L^\infty}(h^{-d/4})$, we get $\langle f_{\mathbf m,h},  f_{\mathbf m',h} \rangle = O(\e^{-c/h})$.
\hop
As a result of the above discussion, we obtain the following statement.
\begin{prop}\label{ortho}
The family of quasimodes $(f_{\mathbf m,h})_{\mathbf m\in \mathcal U^{(0)}}$ introduced in \eqref{quasim} and \eqref{noyau} is almost orthonormal:
$$\langle f_{\mathbf m,h},  f_{\mathbf m',h} \rangle = \delta_{\mathbf m, \mathbf m'} +O(\e^{-c/h}).$$
\end{prop}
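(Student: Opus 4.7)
The plan is to establish the orthogonality by a case analysis on the pair $(\mathbf m,\mathbf m')$, organized by the equivalence relation $\mathcal R$, the separating values $\boldsymbol\sigma$, and the heights $W$. The diagonal case together with the five off-diagonal configurations sketched in the discussion preceding the statement already cover all possibilities; the argument therefore amounts to collecting these observations into a clean proof.

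For the diagonal case $\mathbf m=\mathbf m'$, I would first note that $\|f_{\underline{\mathbf m},h}\|=1$ is built into \eqref{renorm}. For $\mathbf m\in\mathcal U_\alpha^{(0)}$, I would expand $\|f_{\mathbf m,h}\|^2$ using \eqref{quasim}. The cross-terms involving $\theta^\alpha_{\tilde{\mathbf m},h}\theta^\alpha_{\tilde{\mathbf m}',h}$ with $\tilde{\mathbf m}\neq\tilde{\mathbf m}'$ are supported where $W_{\mathbf m}\geq c>0$ and thus contribute $O(\mathrm e^{-c/h})$. The remaining diagonal terms, combined with \eqref{renorm} and item \ref{phinonnul} of Lemma \ref{phiexist}, reduce to $\|\varphi^\alpha_{\mathbf m}\|_{\mathcal F_\alpha}^2=1$, giving the desired normalization up to exponentially small errors.

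For $\mathbf m\neq\mathbf m'$, I would run the five cases in turn. When $\mathbf m\mathcal R\mathbf m'$ and one of them is of type I, item \ref{phidelta} of Lemma \ref{phiexist} forces the supports of $f_{\mathbf m,h}$ and $f_{\mathbf m',h}$ to be essentially disjoint. When $\mathbf m\mathcal R\mathbf m'$ with both of type II, a Laplace-method computation reduces the inner product to $\langle\varphi^\alpha_{\mathbf m},\varphi^\alpha_{\mathbf m'}\rangle_{\mathcal F_\alpha}=0$ modulo $O(\mathrm e^{-c/h})$. When $\mathbf m\not\mathcal R\mathbf m'$ but $\boldsymbol\sigma(\mathbf m)=\boldsymbol\sigma(\mathbf m')$, Proposition 3.8 of \cite{michel} allows $\varepsilon,\varepsilon'$ to be tuned so that the supports are disjoint. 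When $\boldsymbol\sigma(\mathbf m)>\boldsymbol\sigma(\mathbf m')$ with $W(\mathbf m)<W(\mathbf m')$, item \ref{phinonnul} of Lemma \ref{phiexist} gives $W\geq W(\mathbf m')$ on $\mathrm{supp}\,f_{\mathbf m',h}$, hence $W_{\mathbf m}\geq c>0$ there, and exponential smallness follows from the uniform bound $f_{\mathbf m',h}=O_{L^\infty}(h^{-d/4})$.

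The hard part will be the remaining case $\mathbf m\not\mathcal R\mathbf m'$, $\boldsymbol\sigma(\mathbf m)>\boldsymbol\sigma(\mathbf m')$ and $W(\mathbf m)=W(\mathbf m')$, where the supports truly overlap at comparable height so that none of the previous mechanisms applies directly. The key observation is that on $\mathrm{supp}\,f_{\mathbf m',h}$ one has $\theta^\alpha_{\mathbf m,h}=1$, so that $f_{\mathbf m,h}$ reduces there to the pure gaussian form \eqref{fmsuppfm'}; this in turn forces $\mathbf m'$ to be of type II. A standard Laplace computation then yields, up to $O(\mathrm e^{-c/h})$, a multiple of
$$\sum_{\tilde{\mathbf m}'\in\widehat{\mathcal U}^{(0)}_{\alpha'}\setminus\mathcal U^{(0),I}_{\alpha'}}\varphi^{\alpha'}_{\mathbf m'}(\tilde{\mathbf m}')\,c^{\alpha'}_h(\tilde{\mathbf m}')^{-1},$$
which by items \ref{phimchapeau} and \ref{phidelta} of Lemma \ref{phiexist} equals $\widehat c_h^{-1}\langle\varphi^{\alpha'}_{\mathbf m'},\varphi^{\alpha'}_{\widehat{\mathbf m}'}\rangle_{\mathcal F_{\alpha'}}$, and this vanishes by orthogonality of the family $(\varphi^{\alpha'}_{\tilde{\mathbf m}'})_{\tilde{\mathbf m}'\in\widehat{\mathcal U}^{(0)}_{\alpha'}}$. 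Collecting the six cases completes the proof.
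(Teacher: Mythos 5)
Your case analysis follows the paper's, and the arguments you give for the cases you do cover are sound (including the explicit diagonal computation, which the paper recovers as the $\delta_{\mathbf m,\mathbf m'}$ output of its "both of type II" case). There is, however, a gap in the enumeration: you assert that the diagonal plus five off-diagonal configurations exhaust all possibilities, but for $\boldsymbol\sigma(\mathbf m)>\boldsymbol\sigma(\mathbf m')$ you treat only $W(\mathbf m)<W(\mathbf m')$ and $W(\mathbf m)=W(\mathbf m')$, whereas the paper's corresponding case is $W(\mathbf m)\neq W(\mathbf m')$, which also includes $W(\mathbf m)>W(\mathbf m')$. Your argument for the strict-inequality case --- using item \ref{phinonnul} to get $W\geq W(\mathbf m')$ on $\mathrm{supp}\, f_{\mathbf m',h}$ and hence $W_{\mathbf m}\geq W(\mathbf m')-W(\mathbf m)>0$ --- only produces a positive lower bound when $W(\mathbf m)<W(\mathbf m')$; if instead $W(\mathbf m)>W(\mathbf m')$, then $W_{\mathbf m}$ is negative near $\mathbf m'$ and the argument collapses.

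The paper closes this by reducing to the situation where \eqref{fmsuppfm'} holds (else the supports are disjoint), and noting that \eqref{fmsuppfm'} together with the adapted-labeling property (each $\tilde{\mathbf m}$ with $\varphi^\alpha_{\mathbf m}(\tilde{\mathbf m})\neq 0$ being a global minimum of $W$ on $E^\alpha(\tilde{\mathbf m})$, so $W\geq W(\mathbf m)$ on $\mathrm{supp}\,\theta^\alpha_{\tilde{\mathbf m},h}$) then forces $W(\mathbf m)\leq W(\mathbf m')$, hence $W(\mathbf m)<W(\mathbf m')$. A more symmetric patch is also available: the same reasoning applied to both $\alpha$ and $\alpha'$ shows that on the overlap of the two supports one has both $W\geq W(\mathbf m)$ and $W\geq W(\mathbf m')$, so $2W-W(\mathbf m)-W(\mathbf m')\geq |W(\mathbf m)-W(\mathbf m')|>0$ there, which gives the required $O(\e^{-c/h})$ bound regardless of the sign of $W(\mathbf m)-W(\mathbf m')$. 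Once this sub-case is filled in, your proof is complete and matches the paper's.
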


\section{Action of the operator $P_h$}

Let us fix $\mathbf m \in \mathcal U^{(0)}_\alpha$. %and $(\mathbf s,0) \in \mathbf j(\mathbf m)$.
 %\color{green}($\ell^m=\ell$, si $\ell^s$ on précisera).
%\color{black}
%We will also suppose that $\theta_{\mathbf m,h}$ is smooth on a neighborhood of $\chi_{\alpha}$ so that the quasimode $f_{\mathbf m,h}$ belongs to $\mathcal C^\infty_c(\R^{d})$.
%Once we will have chosen the functions $(\ell^{\mathbf s,\tilde{\mathbf m}})_{(\mathbf s,0) \in \mathbf j(\mathbf m)}$, we will see in Lemma \ref{llisse} that this assumption is satisfied.
For $\tilde{\mathbf m}\in \widehat{\mathcal U}^{(0)}_\alpha$ we will denote
\begin{align}\label{ytilde}
\widetilde W_{\tilde{\mathbf m},h}=W_{\mathbf m}+\sum_{\mathbf s \in \mathbf j^\alpha(\tilde{\mathbf m})}(\ell^{\mathbf s,\tilde{\mathbf m}})^2/2%\qquad \qquad \text{dépend de h à travers }\ell^m
\end{align}
and
\begin{align}\label{psi}
\psi^{\tilde{\mathbf m},h}(x,y)=\int_0^1\nabla \widetilde W_{\tilde{\mathbf m},h}( y+t(x-y)) \D t.
\end{align}
%It is clear that $\nabla \widetilde W_{\mathbf m,h}$ is compactly supported so in particular $\psi_{\mathbf m,h}$ is a bounded function.
%Thanks to \eqref{gradyborne} and the assumptions on $\ell$, it is clear that there exists $1>0$ such that for $\|\psi_{\mathbf m,h}\|_\infty < 1$.
%$$\Sigma_1=\{z\in \C \, ; \, |\mathrm{Im}z|< 1\}$$
%$$S^0_1(\langle (x,v,\xi,\xi) \rangle^N)=\{a\in S^0(\langle (x,v,\xi,\xi) \rangle^N) ; \forall \beta \in \N^{d}, \partial^\beta_{(x,v)} a(x,v,\cdot,\cdot)\text{ is analytic in }\Sigma_1^{d} \text{ and } a(x,v,\xi,\xi)\leq \langle (x,v,\xi,\xi) \rangle^N \text{ on } \R^d\times \Sigma_1^{d}\}$$

\begin{rema}\label{defg}
%\color{red}Attention à ça, à reprendre
Using Hypotheses \ref{W} and \ref{hyporw} as well as symbolic calculus, one gets % if we denote
%$$g(x,v,\xi)=(-i\, {}^t\xi+\, {}^tv/2)\# m^h $$
$d_W^*\text{Op}_h(q^h)=\text{Op}_h(g^h)$, with $g^h\sim \sum_{n}h^ng_n$ in $\mathcal M_{1,d}\big(S^0_\kappa(\langle x \rangle^\eta)\big)$ given by
\begin{align}\label{g0}
g_0(x,\xi)=\Big(-i\, \xi^t+ \, \nabla W^t\Big)q_0(x, \xi)
\end{align}
and
\begin{align}\label{gn}
g_n(x,\xi)=\Big(-i\, \xi^t+ \, \nabla W^t\Big)q_n(x, \xi)- \nabla_x^t q_{n-1}(x,\xi)  +\sum_{k=0}^n i^k \mathop{\sum_{\beta \in \N^d;}}_{\substack{|\beta|=k}}  c_{k,\beta}(x) \partial_\xi^\beta(q_{n-k})(x,\xi)
\end{align}
for some $c_{k,\beta}\in S^0(\langle x\rangle^\eta)$ taking values in $\R^d$.
\end{rema}

\begin{lem}\label{gborne}
The operator $\mathrm{Op}_h (g^h)=d_W^*\circ \widehat Q$ introduced in Remark \ref{defg} is bounded on $L^2(\R^d)$.
\end{lem}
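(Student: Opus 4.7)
The plan is to extract boundedness of $d_W^*\widehat Q$ from that of $\tilde P_h=d_W^*\widehat Q d_W$ by a duality argument, using the self-adjoint non-negativity of $\widehat Q$. The key point is that although the symbol $g^h$ itself has polynomial growth $\langle x\rangle^\eta$ in $x$ (through the $\nabla W^t q^h$ term in \eqref{g0}-\eqref{gn}), one never needs to estimate the action of $d_W^*$ in isolation.

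First I would check that $\tilde P_h$ is bounded on $L^2(\R^d)$. Hypothesis \ref{hyporw} provides the factorization $P_h=a^h\circ\tilde P_h\circ a^h$ with $P_h$ bounded and $a^h,(a^h)^{-1}\in S^0(1)$; since $a^h$ depends only on $x$ and is uniformly bounded together with its inverse, both act as bounded invertible multiplication operators on $L^2$, hence $\tilde P_h=(a^h)^{-1}P_h(a^h)^{-1}$ is bounded. Next I would observe that $\widehat Q$ is bounded on $L^2(\R^d;\C^d)$: its symbol $q^h\in\mathcal M_d(S^0_\kappa(\langle \xi_m\rangle^{-1}\langle \xi_p\rangle^{-1}))$ has uniformly bounded derivatives on $\R^{2d}$, so Calderón--Vaillancourt applies. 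Since $\widehat Q$ is self-adjoint and non-negative, it admits a bounded self-adjoint square root $\widehat Q^{1/2}$ with $\|\widehat Q^{1/2}\|^2=\|\widehat Q\|$.

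The heart of the argument is then the following pairing estimate for Schwartz test functions $u\in\mathcal S(\R^d;\C^d)$ and $v\in\mathcal S(\R^d)$, obtained by using the formal adjoint relation between $d_W$ and $d_W^*$ and the self-adjointness of $\widehat Q^{1/2}$:
\begin{equation}
\langle d_W^*\widehat Q u,v\rangle=\langle \widehat Q u,d_W v\rangle=\langle \widehat Q^{1/2}u,\widehat Q^{1/2}d_W v\rangle \leq \|\widehat Q\|^{1/2}\|u\|\,\|\widehat Q^{1/2}d_W v\|.
\end{equation}
For the remaining factor, the factorization of $\tilde P_h$ gives
\begin{equation}
\|\widehat Q^{1/2}d_W v\|^2=\langle d_W v,\widehat Q d_W v\rangle=\langle v,\tilde P_h v\rangle\leq \|\tilde P_h\|\,\|v\|^2.
\end{equation}
Combining, $|\langle d_W^*\widehat Q u,v\rangle|\leq \|\widehat Q\|^{1/2}\|\tilde P_h\|^{1/2}\|u\|\|v\|$ for all Schwartz $u,v$; by density this extends to all of $L^2$, yielding $\|d_W^*\widehat Q\|\leq \|\widehat Q\|^{1/2}\|\tilde P_h\|^{1/2}<\infty$.

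The only mild subtlety is making sure the formal integrations by parts used in the pairing identity are legitimate: this is handled by starting with Schwartz $u,v$, for which $d_W v\in\mathcal S(\R^d;\C^d)$ lies in the domain of $\widehat Q^{1/2}$ and all pairings converge absolutely, and then extending by density once the uniform bound is in hand.
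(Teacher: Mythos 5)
Your proof is correct and follows essentially the same route as the paper: reduce to estimating $\widehat Q d_W$ (or its quadratic form) via self-adjointness of $\widehat Q$, insert $\widehat Q^{1/2}$, and use the factorization $\tilde P_h = d_W^* \widehat Q d_W$ together with the boundedness of $\tilde P_h = (a^h)^{-1} P_h (a^h)^{-1}$. The paper phrases it as a direct norm estimate on $\widehat Q d_W u$ (noting boundedness of $d_W^*\widehat Q$ is equivalent to that of its adjoint), while you present the dual bilinear-form version with an explicit density step, but the underlying argument is the same.
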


\begin{proof}
Since $\widehat Q$ is self-adjoint, it is sufficient to prove that $\widehat Q\circ d_W$ is bounded.
Thanks to the facts that $\widehat Q$ is bounded and non negative, we can simply write for $u\in L^2(\R^d)$
\begin{align}
\|\widehat Qd_W u\|^2&\leq C\|\widehat Q^{1/2}d_Wu\|^2\\
	&\leq C \langle \widehat Qd_Wu, d_Wu \rangle\\
	&\leq C \langle P_hu,u \rangle\\
	&\leq C \|u\|^2 
\end{align}
according to Hypothesis \ref{hyporw}, and the statement is proven.
\end{proof}

\begin{prop}\label{phf}
Let $f_{\mathbf m,h}$ be the quasimode defined in \eqref{quasim}.
With the notations introduced in \eqref{approxa} and \eqref{ytilde}, one has
$$P_hf_{\mathbf m,h}=a^h\frac{h^{1-d/4}}{2} A_h^{-1}\sum_{\tilde{\mathbf m}\in \widehat{\mathcal U}_\alpha^{(0)}} \varphi^\alpha_{\mathbf m}(\tilde{\mathbf m}) c^\alpha_h(\tilde{\mathbf m})  \om^{\tilde{\mathbf m},\alpha}\,\e^{\frac{-\widetilde W_{\tilde{\mathbf m},h}}{h}}\1_{\mathbf j^\alpha(\tilde{\mathbf m})+B(0,2r)}+O_{L^2}\Big(h^\infty\e^{-\frac{S(\mathbf m)}{h}}\Big)$$
where $\om^{\tilde{\mathbf m},\alpha}$ is a function bounded uniformly in $h$ and defined for $x\in \mathbf j^\alpha(\tilde{\mathbf m})+B(0,2r)$ by
\begin{align*}
\om^{\tilde{\mathbf m},\alpha}(x)=&\sum_{ \mathbf s \in \mathbf j^\alpha(\tilde{\mathbf m})} (2\pi h)^{-d}\int_{\R^d} \int_{|y-\mathbf s|\leq 2r} \e^{\frac ih \xi \cdot (x-y)}g^h\Big(\frac{x+y}{2},\xi+i\psi^{\tilde{\mathbf m},h}(x,y)\Big) \nabla\ell^{\mathbf s,\tilde{\mathbf m}}(y)\,\D y \D \xi.
\end{align*}
%with $I^{\mathbf s,h}(x,v)$ given for $(x,v)\in \mathbf j^W(\mathbf m)+B(0,2r)$ by the oscillatory integral
%$$(2\pi h)^{-d}\int_{\R^d} \int_{|y-\mathbf s|\leq 2r} \e^{\frac ih \xi \cdot (v-y)}g^h\Big(x,\frac{v+y}{2},\xi+i\psi^{\mathbf m,h}(x,v,y)\Big) \nabla\ell^{\mathbf s,\tilde{\mathbf m}}(x,y)\,\D y \D \xi.$$
%is bounded uniformly in $h$ outside $B(\mathbf j(\mathbf m),r)$.
\end{prop}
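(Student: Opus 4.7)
The plan is to substitute the factorization $P_h = a^h \circ \tilde P_h \circ a^h$ from Hypothesis~\ref{hyporw} and reduce to the action of $\tilde P_h = \mathrm{Op}_h(g^h) \circ d_W$ on the \emph{stripped} quasimode $a^h f_{\mathbf m,h}$. The inner $a^h$ cancels the prefactor $(a^h)^{-1}$ in \eqref{quasim}, and since $W_{\mathbf m}$ differs from $W$ only by a constant, one has $d_W(\e^{-W_{\mathbf m}/h} g) = h\, \e^{-W_{\mathbf m}/h}\, \nabla g$, which yields
\begin{equation}
d_W(a^h f_{\mathbf m,h}) = h^{1-d/4}\, \e^{-W_{\mathbf m}/h}\, \nabla\Big[\chi_\alpha \sum_{\tilde{\mathbf m}} \varphi^\alpha_{\mathbf m}(\tilde{\mathbf m})\, c^\alpha_h(\tilde{\mathbf m})\, \theta^\alpha_{\tilde{\mathbf m},h}\Big].
\end{equation}
I would split this gradient into a $\nabla \chi_\alpha$ part and a $\nabla \theta^\alpha_{\tilde{\mathbf m},h}$ part. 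By \eqref{suppchi}, the first is supported where $W_{\mathbf m} \geq S(\mathbf m) + \tilde \varepsilon$, so that after multiplication by $\e^{-W_{\mathbf m}/h}$ and application of the $L^2$-bounded operator $\mathrm{Op}_h(g^h)$ (Lemma~\ref{gborne}), it produces an $O_{L^2}(h^\infty \e^{-S(\mathbf m)/h})$ contribution, which is absorbed into the claimed remainder.

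For the main term, formula \eqref{thetainte} gives $\nabla \theta^\alpha_{\tilde{\mathbf m},h} = (2A_h)^{-1} \zeta(\ell^{\mathbf s,\tilde{\mathbf m}})\, \e^{-(\ell^{\mathbf s,\tilde{\mathbf m}})^2/(2h)}\,\nabla \ell^{\mathbf s,\tilde{\mathbf m}}$ near each $\mathbf s \in \mathbf j^\alpha(\tilde{\mathbf m})$. Since the supports of the $\ell^{\mathbf s,\tilde{\mathbf m}}$ corresponding to different saddle points are pairwise disjoint for $r$ small, the product $\e^{-W_{\mathbf m}/h}\e^{-(\ell^{\mathbf s,\tilde{\mathbf m}})^2/(2h)}$ coincides piece by piece with $\e^{-\widetilde W_{\tilde{\mathbf m},h}/h}$. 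The crucial step is then to apply $\mathrm{Op}_h(g^h)$ to a function of the form $u(y)\, \e^{-\widetilde W_{\tilde{\mathbf m},h}(y)/h}$ with $u$ smooth and compactly supported near $\mathbf s$. Using the identity $\widetilde W_{\tilde{\mathbf m},h}(y) - \widetilde W_{\tilde{\mathbf m},h}(x) = \psi^{\tilde{\mathbf m},h}(x,y)\cdot(y-x)$ coming from \eqref{psi}, one factors out $\e^{-\widetilde W_{\tilde{\mathbf m},h}(x)/h}$ and absorbs the remaining $\e^{-\psi^{\tilde{\mathbf m},h}(x,y)\cdot(y-x)/h}$ into the oscillating phase by a complex shift $\xi \mapsto \xi - i\psi^{\tilde{\mathbf m},h}(x,y)$ inside $g^h$. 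This contour deformation is legitimate thanks to the analyticity of $g^h$ in $\xi$ on the strip $\Sigma_\kappa$ (Hypothesis~\ref{hyporw}(d)) together with the decay of $g^h \in S^0_\kappa(\langle \xi_m\rangle^{-1}\langle \xi_p\rangle^{-1})$, provided $r$ is small enough that $\|\psi^{\tilde{\mathbf m},h}\|_\infty < \kappa$ on the relevant compact sets. Recognising the resulting integral as $\omega^{\tilde{\mathbf m},\alpha}(x)$ then uses that $\chi_\alpha \equiv 1$ and $\zeta(\ell^{\mathbf s,\tilde{\mathbf m}}) \equiv 1$ on $B(\mathbf s,2r)$ (for $r$ small, by continuity from $\ell^{\mathbf s,\tilde{\mathbf m}}_0(\mathbf s)=0$), while $\nabla \ell^{\mathbf s,\tilde{\mathbf m}}$ can be assumed supported in that ball.

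To conclude, I would apply the outer $a^h$ and restrict the output in $x$ to $\mathbf j^\alpha(\tilde{\mathbf m}) + B(0,2r)$: off this neighborhood, the rapid off-diagonal decay of the contour-shifted oscillatory kernel combined with the bound $\e^{-\widetilde W_{\tilde{\mathbf m},h}/h} = O(\e^{-S(\mathbf m)/h})$ at the saddle scale produces yet another $O_{L^2}(h^\infty \e^{-S(\mathbf m)/h})$ piece, folded into the error. The uniform-in-$h$ boundedness of $\omega^{\tilde{\mathbf m},\alpha}$ then follows from standard estimates on the shifted symbol $g^h(\cdot, \cdot + i\psi^{\tilde{\mathbf m},h})$ viewed in the class $S^0(\langle \xi_m\rangle^{-1}\langle \xi_p\rangle^{-1})$. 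The main obstacle is the rigorous justification of the contour shift: one must track all $y$-derivatives of $\psi^{\tilde{\mathbf m},h}$ and of $g^h$ in the strip $\Sigma_\kappa$ and verify that the shifted symbol defines an oscillatory integral with the decay needed both for $L^2$ boundedness and for the off-diagonal kernel estimates used in the localisation step.
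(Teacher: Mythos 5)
Your proposal follows essentially the same route as the paper's proof: apply the factorization from Hypothesis~\ref{hyporw}, compute $d_W$ of the stripped quasimode, discard the $\nabla\chi_\alpha$ contribution via \eqref{suppchi} and Lemma~\ref{gborne}, reduce the $\nabla\theta^\alpha_{\tilde{\mathbf m},h}$ contribution to a piece supported near each saddle, pull $\e^{-\widetilde W_{\tilde{\mathbf m},h}/h}$ out of $\mathrm{Op}_h(g^h)$ via the identity $\widetilde W_{\tilde{\mathbf m},h}(y)-\widetilde W_{\tilde{\mathbf m},h}(x)=\psi^{\tilde{\mathbf m},h}(x,y)\cdot(y-x)$, and then use the analyticity of $g^h$ on $\Sigma_\kappa$ to shift the $\xi$-contour by $i\psi^{\tilde{\mathbf m},h}$. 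The only place where your ordering differs from (and is slightly less clean than) the paper's is the localization to $\mathbf j^\alpha(\tilde{\mathbf m})+B(0,2r)$: for $x$ far from the saddle, $\psi^{\tilde{\mathbf m},h}(x,y)$ need not stay in $\Sigma_\kappa$, so one cannot invoke "off-diagonal decay of the contour-shifted kernel" there; the paper instead runs a non-stationary phase argument directly on the unshifted kernel in the regime $|x-\mathbf s|\geq 2r$ (exploiting $|x-y|\geq r$), and performs the contour shift only when $|x-\mathbf s|<2r$, which is what you should do as well.
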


\begin{proof}
In order to lighten the notations, we will drop some of the exponents and indexes $\mathbf m$, $\mathbf s$, $\alpha$ and $h$ in the proof.
Let $\tilde{\mathbf m}\in \widehat{\mathcal U}_\alpha^{(0)}$.
By \eqref{hypol}, we have on the support of $\chi$ that $\theta_{\tilde{\mathbf m}}^\alpha$ is smooth and since it is constant outside of $B(\mathbf s,r)$, we have
\begin{align}\label{nablatheta}
\nabla \theta_{\tilde{\mathbf m}}^\alpha=\frac{A_h^{-1}}{2}\sum_{\mathbf s\in \mathbf j^\alpha(\tilde{\mathbf m})}\e^{-(\ell^{\mathbf s,\tilde{\mathbf m}})^2/2h}\zeta(\ell^{\mathbf s,\tilde{\mathbf m}})\nabla \ell^{\mathbf s,\tilde{\mathbf m}} \,\1_{B(\mathbf s,r)}.
\end{align}
%Here we have to put the indicator function because $\zeta(\ell)\nabla \ell$ might have some support in %without it, it would not be clear that $\nabla \theta$ has no support in 
%$B(\mathbf s,3r)\backslash B(\mathbf s,r)$.
%\color{blue}Rq: normalement sur un voisinage de support de $\chi  \cap \partial B(\mathbf j(\mathbf m),r)$ on a $|\ell| \geq \gamma$ donc $\nabla \theta$ est déja nul donc tout est bien C$^\infty$.
%\color{black}
We can then use Remark \ref{defg} to write
\begin{align}\label{qf}
  P_h(f)&=a^hh^{1-d/4} \sum_{\tilde{\mathbf m}\in \widehat{\mathcal U}_\alpha^{(0)}} \varphi^\alpha_{\mathbf m}(\tilde{\mathbf m}) c^\alpha_h(\tilde{\mathbf m}) \mathrm{Op}_h(g)\big(\nabla \theta_{\tilde{\mathbf m}}^\alpha \chi \e^{-W_{\mathbf m}/h}+\nabla\chi \theta_{\tilde{\mathbf m}}^\alpha \e^{-W_{\mathbf m}/h}\big)\nonumber\\
			&=a^h\frac{h^{1-d/4}}{2} A_h^{-1} \sum_{\tilde{\mathbf m}\in \widehat{\mathcal U}_\alpha^{(0)}} \varphi^\alpha_{\mathbf m}(\tilde{\mathbf m}) c^\alpha_h(\tilde{\mathbf m}) \sum_{\mathbf s\in \mathbf j^\alpha(\tilde{\mathbf m})} \mathrm{Op}_h(g)\Big(\zeta(\ell^{\mathbf s,\tilde{\mathbf m}}) \chi \e^{\frac{-\widetilde W_{\tilde{\mathbf m},h}}{h}} \nabla \ell^{\mathbf s,\tilde{\mathbf m}} \,\1_{B(\mathbf s,r)}\Big)+O\Big(h\e^{-\frac{S(\mathbf m)+\tilde \varepsilon}{h}}\Big) 
\end{align}
where we used \eqref{suppchi} and Lemma \ref{gborne}.
Now we have for $\mathbf s\in \mathbf j^\alpha(\tilde{\mathbf m})$
\begin{align}\label{pasxi}
(2 \pi h)^{d}\mathrm{Op}_h(g)\Big(\zeta(\ell^{\mathbf s,\tilde{\mathbf m}}) \chi \e^{\frac{-\widetilde W_{\tilde{\mathbf m},h}}{h}}& \nabla \ell^{\mathbf s,\tilde{\mathbf m}} \,\1_{B(\mathbf s,r)}\Big)(x)=\int_{\R^{d}} \int_{|y-\mathbf s|\leq r} \e^{\frac ih \xi \cdot (x-y)}g\Big(\frac{x+y}{2},\xi\Big) \\
& \qquad \qquad \qquad \quad \times\chi(y) \zeta\big(\ell^{\mathbf s,\tilde{\mathbf m}}(y)\big) \e^{-\widetilde W_{\tilde{\mathbf m}}(y)/h}\nabla\ell^{\mathbf s,\tilde{\mathbf m}}(y)\;\D y \D \xi . \nonumber
\end{align}
%\color{green} La limite au dessus pas utile là!
Let us now treat separately the cases $|x-\mathbf s|\geq 2r$ and $|x-\mathbf s|<2r$ .\\
When $|x-\mathbf s|\geq 2r$, we have $|x-y|\geq r$ so we can apply the non stationnary phase to the integral in $\xi$ to get that for all $N\geq 1$, there exists $C_N>0$ such that
\begin{align*}
\bigg|\int_{\R^{d}} \int_{|y-\mathbf s|\leq r} \e^{\frac ih \xi \cdot (x-y)}g\Big(\frac{x+y}{2},\xi\Big) \chi(y) \zeta\big(\ell^{\mathbf s,\tilde{\mathbf m}}(y)\big) \e^{-\widetilde W_{\tilde{\mathbf m}}(y)/h}\nabla\ell^{\mathbf s,\tilde{\mathbf m}}(y)\;\D y \D \xi \bigg| \leq C_N h^{N} |x-\mathbf s|^{\eta-N} \e^{-\frac{S(\mathbf m)}{h}}
	%&\qquad \leq C  h^N  \sup_{|\beta| \leq N+d}\int_{\R^{d}}\int_{|y-\mathbf s|\leq r}  |v-y|^{-N} \Big|\partial_\xi^\beta g\Big(x,\frac{v+y}{2},\xi\Big)\Big|  \e^{-\widetilde W(x,y)/h}\; \D y \D \xi \\
	%&\qquad \leq C_N h^{N} |v|^{-N} \e^{-\frac{S(\mathbf m)}{h}}
\end{align*}
where we used item \ref{hypoln-1} from \eqref{hypol}, the fact that $W_{\mathbf m}(\mathbf s)+(\ell_0^{\mathbf s,\tilde{\mathbf m}})^2(\mathbf s)/2=S(\mathbf m)$ and the estimate $|x-y|\geq |x-\mathbf s|/2$.
Hence we have shown that %for all $x\in \R^d$ and $|v|\geq 2r$, 
\begin{align}\label{vgrand}
 P_hf\,\1_{\big\{\mathrm{dist}\big(\cdot,\cup_{\tilde{\mathbf m}\in \widehat{\mathcal U}^{(0)}_\alpha}\mathbf j^\alpha(\tilde{\mathbf m})\big)\geq 2r\big\}}=O\Big(h^\infty \e^{-\frac{S(\mathbf m)}{h}}\Big).
\end{align}
Now for the case $|x-\mathbf s|<2r$, let us denote $J_1^{\mathbf s,\tilde{\mathbf m}}(x)$ the RHS of \eqref{pasxi}. Proceeding as in \cite{Nakamura} in order to take the $\e^{-\widetilde W_{\tilde{\mathbf m}}(y)/h}$ in front of the oscillatory integral, %(which is justified by Proposition $\ref{limosci}$)
we get that
\begin{align}\label{expdevant}
J_1^{\mathbf s,\tilde{\mathbf m}}(x)=\e^{-\widetilde W_{\tilde{\mathbf m}}(x)/h}J_2^{\mathbf s,\tilde{\mathbf m}}(x)
\end{align}
where
\begin{align}
J_2^{\mathbf s,\tilde{\mathbf m}}(x)=\int_{\R^d} \int_{|y-\mathbf s|\leq r} \e^{\frac ih \big(\xi-i\psi(x,y)\big) \cdot \big(x-y\big)}g\Big(\frac{x+y}{2},\xi\Big) \chi(y) \zeta\big(\ell^{\mathbf s,\tilde{\mathbf m}}(y)\big) \nabla\ell^{\mathbf s,\tilde{\mathbf m}}(y)\;\D y \D \xi 
\end{align}
and $\psi$ is the function defined in \eqref{psi}.
Note that up to taking $r$ smaller, we can suppose that $|\psi|<1$ (and thus $\xi +i\psi$ lies in the analyticity strip of $g$) on $B(\mathbf s,2r)\times \{|y-\mathbf s|<r\}$.
Applying the Cauchy formula as in \cite{me} (proof of Proposition 3.13), one gets $J_2^{\mathbf s,\tilde{\mathbf m}}=J_3^{\mathbf s,\tilde{\mathbf m}}$ where
%is analytic and bounded together with its derivatives on $\Sigma_1$; %in $S^0_1(1)$ and tends to 0 when $|\mathrm{Re }\,\xi_k|\to\infty$ in $\Sigma_1$. 
%\color{blue}(IPP+holom sous inté+cv dom pour la limite)
%\color{black}
\begin{align}
J_3^{\mathbf s,\tilde{\mathbf m}}(x)=\int_{\R^d} \int_{|y-\mathbf s|\leq r} \e^{\frac ih \xi \cdot (x-y)}g\Big(\frac{x+y}{2},\xi+i\psi(x,y)\Big)\chi(y) \zeta\big(\ell^{\mathbf s,\tilde{\mathbf m}}(y)\big) \nabla\ell^{\mathbf s,\tilde{\mathbf m}}(y)\;\D y \D \xi 
\end{align}
Combined with %(\ref{qf}), 
\eqref{pasxi} and \eqref{expdevant}, this yields for $|x-\mathbf s|<2r$
\begin{align}\label{opgtruc}
(2 \pi h)^{d}\mathrm{Op}_h(g)\Big(\zeta(\ell^{\mathbf s,\tilde{\mathbf m}}) \chi \e^{\frac{-\widetilde W_{\tilde{\mathbf m},h}}{h}}& \nabla \ell^{\mathbf s,\tilde{\mathbf m}} \,\1_{B(\mathbf s,r)}\Big)(x)=\e^{\frac{-\widetilde W_{\tilde{\mathbf m},h}(x)}{h}}J_3^{\mathbf s,\tilde{\mathbf m}}(x).
\end{align}
Therefore, setting on $\mathbf j^\alpha(\tilde{\mathbf m})+B(0,2r)$
\begin{align*}
\tilde \om^{\tilde{\mathbf m},\alpha}= (2\pi h)^{-d} \sum_{\mathbf s\in \mathbf j^\alpha(\tilde{\mathbf m})}J_3^{\mathbf s,\tilde{\mathbf m}}(x),
\end{align*}
we have according to \eqref{qf}, \eqref{vgrand} and \eqref{opgtruc}
$$P_hf=a^h\frac{h^{1-d/4}}{2} A_h^{-1}\sum_{\tilde{\mathbf m}\in \widehat{\mathcal U}_\alpha^{(0)}} \varphi^\alpha_{\mathbf m}(\tilde{\mathbf m}) c^\alpha_h(\tilde{\mathbf m})  \tilde\om^{\tilde{\mathbf m},\alpha}\,\e^{\frac{-\widetilde W_{\tilde{\mathbf m},h}}{h}}\1_{\mathbf j^\alpha(\tilde{\mathbf m})+B(0,2r)}+O\Big(h^\infty\e^{-\frac{S(\mathbf m)}{h}}\Big).$$
Hence it is sufficient to check that on $\mathbf j^\alpha(\tilde{\mathbf m})+B(0,2r)$
\begin{align*}
\big(\tilde\om^{\tilde{\mathbf m},\alpha}-\om^{\tilde{\mathbf m},\alpha} \big)\e^{\frac{-\widetilde W_{\tilde{\mathbf m},h}}{h}}=O\Big(h^\infty \e^{-\frac{S(\mathbf m)}{h}}\Big).
\end{align*}
This can be done easily using again the non stationary phase with $x$ in an $h$-independent neighborhood of $\mathbf s$ on which $\chi\zeta(\ell)-1$ vanishes since item \ref{hypoln-1} from \eqref{hypol} implies that 
$$\e^{\frac{-\widetilde W_{\tilde{\mathbf m},h}}{h}}=O(\e^{-(S(\mathbf m)+\delta)/h})$$
outside of this neighborhood for some $\delta>0$.
\end{proof}

\section{Choice of $\ell^{\mathbf s,\tilde{\mathbf m}}$}\label{sectionequations}
\noindent
From now on, we also fix $\tilde{\mathbf m}\in \widehat{\mathcal U}^{(0)}_\alpha$ and $\mathbf s\in \mathbf j^\alpha(\tilde{\mathbf m})$.
We write for shortness $\ell^{\mathbf s}$ instead of $\ell^{\mathbf s,\tilde{\mathbf m}}$.

\begin{lem}\label{expom}
%We have $\om(x,v)=O(h^{\infty})$ for $(x,v)\notin B(\mathbf s,r)$. 
The function $\om^{\tilde{\mathbf m},\alpha}$ admits the classical expansion $\om^{\tilde{\mathbf m},\alpha} \sim \sum_{j\geq 0}h^j\om^{\tilde{\mathbf m},\alpha}_j$ on $B(\mathbf s,2r)$
where 
\begin{align}\label{om0}
\om^{\tilde{\mathbf m},\alpha}_0=q_0\Big(x,i\big(\nabla W +\ell^{\mathbf s}_0 \, \nabla \ell^{\mathbf s}_0\big)\Big)\big(2\nabla W+\ell^{\mathbf s}_0 \nabla \ell^{\mathbf s}_0\big)\cdot \nabla\ell^{\mathbf s}_0
\end{align}
and for $j\geq 1$, 
\begin{align}\label{omj}
\qquad \om^{\tilde{\mathbf m},\alpha}_{j}=&\,2\, q_0\Big(x,i\big(\nabla W +\ell^{\mathbf s}_0 \, \nabla \ell^{\mathbf s}_0\big)\Big)( \nabla W+\ell^{\mathbf s}_0 \nabla \ell^{\mathbf s}_0)\cdot \nabla \ell^{\mathbf s}_j \\
			&+i\, \ell^{\mathbf s}_0 \big(2\, \nabla W^t+\ell^{\mathbf s}_0 \, ( \nabla \ell^{\mathbf s}_0)^t\big) D_\xi q_0\big(x,i(\nabla W+\ell^{\mathbf s}_0 \nabla \ell^{\mathbf s}_0)\big)\big( \nabla \ell^{\mathbf s}_j\big) \, \nabla \ell^{\mathbf s}_0 \nonumber\\
			&+q_0\Big(x,i\big(\nabla W +\ell^{\mathbf s}_0 \, \nabla \ell^{\mathbf s}_0\big)\Big) \nabla \ell^{\mathbf s}_0 \cdot \nabla \ell^{\mathbf s}_0 \, \ell^{\mathbf s}_j\nonumber\\
			&+i  \big(2\,\nabla W^t+\ell^{\mathbf s}_0 \, ( \nabla \ell^{\mathbf s}_0)^t\big) D_\xi q_0\big(x,i(\nabla W+\ell^{\mathbf s}_0 \nabla \ell^{\mathbf s}_0)\big)\big( \nabla \ell^{\mathbf s}_0\big) \, \nabla \ell^{\mathbf s}_0 \, \ell^{\mathbf s}_j \nonumber\\
			&+R_j(\ell^{\mathbf s}_0, \dots, \ell^{\mathbf s}_{j-1})\nonumber
\end{align}
where $R_j : \big( \mathcal C^\infty(B(\mathbf s,2r) )\big)^j \to \mathcal C^\infty(B(\mathbf s,2r) )$ and $D_\xi$ denotes the partial differential with respect to the variable $\xi$.
\end{lem}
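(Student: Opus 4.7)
My plan is as follows. Fix $\mathbf s \in \mathbf j^\alpha(\tilde{\mathbf m})$ and work in $B(\mathbf s,2r)$. By the support properties in \eqref{hypol}, the terms $\ell^{\mathbf s',\tilde{\mathbf m}}$ for $\mathbf s'\neq \mathbf s$ vanish identically on the relevant domain, so that in the formula from Proposition \ref{phf} only the summand indexed by $\mathbf s$ contributes and
$$\psi^{\tilde{\mathbf m},h}(x,y)=\int_0^1\nabla\Big(W_{\mathbf m}+\tfrac12(\ell^{\mathbf s,\tilde{\mathbf m}})^2\Big)(y+t(x-y))\D t$$
on $B(\mathbf s,2r)^2$. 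I recognize the resulting integral as an oscillatory integral with phase $\xi\cdot(x-y)$, whose unique critical point in $(y,\xi)$ is $(x,0)$ and is non-degenerate. Combined with the $\xi$-analyticity of $g^h$ inherited from Hypothesis \ref{hyporw} via Remark \ref{defg}, this sets up a standard stationary-phase / pseudo-differential expansion producing a full classical expansion $\om^{\tilde{\mathbf m},\alpha}\sim\sum h^j\om^{\tilde{\mathbf m},\alpha}_j$ on $B(\mathbf s,2r)$.

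The leading term is immediate: evaluation of the amplitude at $(y,\xi)=(x,0)$ gives $g^h(x,i\psi^{\tilde{\mathbf m},h}(x,x))\,\nabla\ell^{\mathbf s}(x)$, and the $h=0$ part of $\psi^{\tilde{\mathbf m},h}(x,x)$ is $\nabla W+\ell_0^{\mathbf s}\,\nabla\ell_0^{\mathbf s}$. Substituting the explicit formula \eqref{g0} for $g_0$ and using $-i\cdot i=1$ so that the $-i\xi^t$ factor becomes $(\nabla W+\ell_0^{\mathbf s}\nabla\ell_0^{\mathbf s})^t$, one gets the $(2\nabla W+\ell_0^{\mathbf s}\nabla\ell_0^{\mathbf s})^t q_0$ row-vector in front, which after pairing with $\nabla\ell_0^{\mathbf s}$ yields \eqref{om0}.

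For the $j$-th coefficient, my strategy is to isolate the part that is linear in $\ell_j^{\mathbf s}$ from the part depending polynomially on $\ell_0^{\mathbf s},\ldots,\ell_{j-1}^{\mathbf s}$ (which is the remainder $R_j$). Expanding $\ell^{\mathbf s}\sim\sum h^i\ell_i^{\mathbf s}$, the $h^j$-component of $\psi^{\tilde{\mathbf m},h}(x,x)$ has $\ell_j^{\mathbf s}$-linear part $\ell_j^{\mathbf s}\nabla\ell_0^{\mathbf s}+\ell_0^{\mathbf s}\nabla\ell_j^{\mathbf s}$, while the factor $\nabla\ell^{\mathbf s}$ in the amplitude contributes $\nabla\ell_j^{\mathbf s}$ at order $h^j$; every other appearance of $\ell_j^{\mathbf s}$ arising from higher-order terms of the stationary-phase expansion (Hessian corrections in $(y,\xi)$, higher $g_n$, higher $\psi$-terms) comes bundled with at least one extra factor of $h$, so those corrections only involve $\ell_0^{\mathbf s},\ldots,\ell_{j-1}^{\mathbf s}$ at the order $h^j$ and go into $R_j$. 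Linearizing $\om_0^{\tilde{\mathbf m},\alpha}=q_0(x,i(\nabla W+\ell_0^{\mathbf s}\nabla\ell_0^{\mathbf s}))(2\nabla W+\ell_0^{\mathbf s}\nabla\ell_0^{\mathbf s})\cdot\nabla\ell_0^{\mathbf s}$ in the direction $\ell_j^{\mathbf s}$, using the chain rule and the symmetry of $q_0$ (which follows from the analyticity of $q^h$ together with the parity assumption $q^h(x,-\xi)=q^h(x,\xi)$ from Hypothesis \ref{hyporw}), reproduces exactly the four $\ell_j^{\mathbf s}$-dependent terms of \eqref{omj}.

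The main obstacle is the rigorous derivation of the stationary-phase expansion for an oscillatory integral whose amplitude carries a complex shift $i\psi^{\tilde{\mathbf m},h}(x,y)$ depending on \emph{both} $x$ and $y$ rather than on $\tfrac{x+y}{2}$ alone, and whose symbol is not compactly supported in $\xi$. This is handled by a contour deformation in $\xi$ inside the analyticity strip $\Sigma_\kappa$ provided by Hypothesis \ref{hyporw}, in exactly the same way as in the passage from $J_2^{\mathbf s,\tilde{\mathbf m}}$ to $J_3^{\mathbf s,\tilde{\mathbf m}}$ in the proof of Proposition \ref{phf}, and then using that integration against $\nabla\ell^{\mathbf s,\tilde{\mathbf m}}\1_{|y-\mathbf s|\leq 2r}\in \mathcal C^\infty_c$ reduces the computation to semiclassical Weyl calculus. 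Once the asymptotic expansion is available, the extraction of the explicit expressions \eqref{om0}-\eqref{omj} is elementary bookkeeping.
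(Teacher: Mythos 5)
Your proof follows essentially the same path as the paper's: shift the $\xi$-contour into the analyticity strip (as in the $J_2^{\mathbf s,\tilde{\mathbf m}}\to J_3^{\mathbf s,\tilde{\mathbf m}}$ step of Proposition \ref{phf}), expand $\psi^{\tilde{\mathbf m},h}$ and $g^h$ in powers of $h$, run the stationary phase in $(y,\xi)$, and observe that only the zeroth term of the $\partial_y\cdot\partial_\xi$ hierarchy can make $\ell^{\mathbf s}_j$ appear at order $h^j$, since every other correction carries extra powers of $h$ and therefore only sees $\ell^{\mathbf s}_0,\dots,\ell^{\mathbf s}_{j-1}$. Your remark that the $\ell^{\mathbf s}_j$-linear part of $\om^{\tilde{\mathbf m},\alpha}_j$ is exactly the formal linearization of $\om^{\tilde{\mathbf m},\alpha}_0$, viewed as a functional of $\ell^{\mathbf s}_0$, in the direction $\ell^{\mathbf s}_j$ is a tidy shorthand for the paper's explicit computation of $g_{0,j}(x,x,0)$ from \eqref{gnj} and \eqref{psij}, but it is the same computation. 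One small slip: the symmetry of the matrix $q_0$, which you implicitly need (e.g.\ to merge $(2\nabla W+\ell^{\mathbf s}_0\nabla\ell^{\mathbf s}_0)^t q_0\nabla\ell^{\mathbf s}_j$ with $\ell^{\mathbf s}_0(\nabla\ell^{\mathbf s}_j)^t q_0\nabla\ell^{\mathbf s}_0$ into the first line of \eqref{omj}), is a consequence of the self-adjointness of $\widehat Q$, not of the parity $q^h(x,-\xi)=q^h(x,\xi)$ and $\xi$-analyticity you invoke.
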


\begin{proof} %
%The first assertion of the lemma comes from the fact that $\mathrm{supp}$ $\ell \subseteq B(\mathbf s,r)$. Now
Once again, we drop some of the exponents and indexes $\tilde{\mathbf m}$, $\mathbf s$, $\alpha$ and $h$ in the proof.
\sloppy Denote $B_\infty(\mathbf s,2r)=\{(y,\xi )\in \R^{2d}\,;\,\max (|y-\mathbf s|,|\xi|)< 2r\}$.
%The first terms of $\om_0$ and $\om_j$ are both easily obtained thanks to the expansion of $\ell$ on $B(\mathbf s,2r)$.
We need to get an expansion of $g(x/2+y/2,\xi+i\psi(x,y))$ that we will then be able to combine with the stationnary phase to get an expansion of %the whole term $I^{\mathbf s,h}$ of 
$\om$.
Let us start with an expansion of $\psi$ :
the expansion of $\ell$ yields
$$\nabla \widetilde W-\nabla W\sim \sum_{j\geq 0}h^j\sum_{k=0}^j\ell_k\nabla \ell_{j-k}\quad \text{on } B(\mathbf s,2r)$$
so using \eqref{psi}, we get 
\begin{align}\label{exppsi}
\psi \sim \sum_{j\geq 0}h^j\psi_j\quad \text{on } B(\mathbf s,2r)\times \{|y|\leq 2r\}
\end{align}
where 
\begin{align}\label{psi0}
\psi_0(x,y)=\int_0^1\big(\nabla W+\ell_0\nabla \ell_0\big)(y+t(x-y))\D t
\end{align}
and for $j \geq 1$, 
\begin{align}\label{psij}
\psi_j(x,y)=\int_0^1\sum_{k=0}^j\big(\ell_k\nabla \ell_{j-k}\big)(y+t(x-y))\D t.
\end{align}
Proceeding as in \cite{me} (proof of Lemma 4.1), we then get thanks to 
%According to 
Remark \ref{defg} that %and Corollary \ref{bousin}, we have
%$$
%g_n\Big(\frac{x+y}{2},\xi +i\psi(x,y)\Big)\sim \sum_{j\geq 0} h^j g_{n,j}(x,y,\xi) \quad \text{on } B(\mathbf s,2r)\times B_\infty (\mathbf s,2r)
%$$
%with 
%Using the expansion of $g$ itself and Proposition \ref{expcompo}, we get
%$$g\Big(\frac{x+y}{2},\xi +i\psi(x,y)\Big)\sim_h\sum_{n\geq 0} h^n g_n\Big(\frac{x+y}{2},\xi +i\psi(x,y)\Big)$$
%on $B(\mathbf s,2r)\times B_\infty (\mathbf s,2r)$ so we can use Proposition \ref{expinexp} which yields
\begin{align}\label{expg}
g\Big(\frac{x+y}{2},\xi +i\psi(x,y)\Big)\sim \sum_{j\geq 0} h^j \sum_{n=0}^j g_{n,j-n}(x,y,\xi)
\end{align}
on $B(\mathbf s,2r)\times B_\infty (\mathbf s,2r)$; with
\begin{align}\label{gn0}
g_{n,0}(x,y,\xi)=g_n\Big(\frac{x+y}{2},\xi +i\psi_0(x,y)\Big)
\end{align}
and for $j \geq 1$
\begin{align}\label{gnj}
g_{n,j}(x,y,\xi)=iD_\xi g_n\Big(\frac{x+y}{2},\xi+i\psi_0(x,y)\Big) \big(\psi_j(x,y)\big)+R^1_j(\ell_0, \dots, \ell_{j-1})
\end{align}
where $R^1_j : \big( \mathcal C^\infty(B(\mathbf s,2r) )\big)^j \to \mathcal C^\infty(B(\mathbf s,2r) )$.
Thus, using the expansion \eqref{expg} that we just got, the one of $\nabla \ell$, and the one for an oscillatory integral given by the stationnary phase (see for instance \cite{Zworski}, Theorem 3.17) as well as Proposition C.3 from \cite{me}, we finally get
\begin{align}\label{expinteosci}
\om \sim \sum_{j\geq 0} h^j \om_j \quad \text{on } B(\mathbf s,2r),
\end{align}
where
$$\om_j(x)=\sum_{n_1+n_2+n_3+n_4=j}\frac{1}{i^{n_1}n_1!}\big(\partial_{y} \cdot \partial_\xi \big)^{n_1} \Big( g_{n_2,n_3}(x,y,\xi) \nabla \ell_{n_4}(y) \Big) \Bigg| \mathop{}_{\substack{y=x \\  \xi =0}}.$$
%\color{blue} Pour les dérivées, il faudra surement prendre le $\#$ avec $g$ car on ne peut pas appliquer Prop $\ref{linfcinf}$: par exemple pour $\nabla$ on obtiendrait une "expansion" qui commence à $h^{-1}$. Ca se passe bien pour $h\nabla$ par exemple. 
%\color{black}
We can already use \eqref{gn0} to deduce the expression of $\omega_0$ by noticing that according to \eqref{psi0}, $\psi_0(x,x)=\nabla W+\ell_0 \nabla \ell_0$.
For $j \geq 1$, the terms of $\om_j$ in which the function $\ell_j$ appears are obviously the one given by $n_4=j$, but also the one given by $n_3=j$ according to \eqref{gnj}.
Indeed, in that case, we have using \eqref{psij} that
\begin{align*}
g_{0,j}(x,x,0)=i\ell_0  D_\xi g_0\big(x,i(\nabla W+&\ell_0 \nabla \ell_0)\big)\big(\nabla \ell_j\big)\\
				&+iD_\xi g_0\big(x,i(\nabla W+\ell_0 \nabla \ell_0)\big)\big(\nabla \ell_0\big) \, \ell_j+R^2_j(\ell_0, \dots, \ell_{j-1})
\end{align*}
where $R^2_j : \big( \mathcal C^\infty(B(\mathbf s,2r) )\big)^j \to \mathcal C^\infty(B(\mathbf s,2r) )$.
We can now conclude as for any $X \in \R^d$,
\begin{align*}D_\xi g_0\big(x,i(\nabla W+\ell_0 \nabla \ell_0)\big)(X)=-i\,X^tq_0&\big(x,i(\nabla W+\ell_0 \nabla \ell_0)\big) \\
					&+\big(2\,\nabla W^t+\ell_0 \, ( \nabla \ell_0)^t\big) D_\xi q_0\big(x,i(\nabla W+\ell_0 \nabla \ell_0)\big)(X) 
\end{align*}
according to \eqref{g0}.
\end{proof}
\hip
Denote $(q_{m,p}^n)_{m,p}$ the entries of the matrix $q_n$ from Hypothesis \ref{hyporw}. 
Since we have for $X\in \R^d$ 
$$ D_\xi q_0\big(x,i(\nabla W+\ell_0 \nabla \ell_0)\big)\big( X\big)=\Big( \partial_{\xi}q^0_{m,p}\big(x,i(\nabla W+\ell_0 \nabla \ell_0)\big)\cdot X\Big)_{1\leq m,p\leq d}$$
we get by putting
\begin{align}\label{U}
\quad U(x)=q_0\Big(x,&i\big(\nabla W +\ell_0 \, \nabla \ell_0\big)\Big) \nabla\ell_0 \\
&+\sum_{1\leq m,p \leq d}\big( 2\partial{x_m}W +\ell_0\partial_{x_m} \ell_0 \big) i \partial_{\xi} q^0_{m,p}\Big(x,i\big(\nabla W +\ell_0 \, \nabla \ell_0\big)\Big)\partial_{x_p}\ell_0 \nonumber
\end{align}
that equation \eqref{omj} reads 
\begin{align*}
\omega_j= \bigg(
				q_0\Big(x,i\big(\nabla W +\ell_0 \, \nabla \ell_0\big)\Big)( 2\nabla W+\ell_0 \nabla \ell_0)+\ell_0\, U
				\bigg) \cdot \nabla \ell_j+ U \cdot \nabla \ell_0  \, \ell_j+R_j(\ell_0, \dots, \ell_{j-1}).
\end{align*}

\begin{lem}\label{ureelle}
Let $x$, $y \in B(\mathbf s,2r)$. 
For any $n \in \N$, $\beta \in \N^d$ and $1 \leq m,p \leq d$, we have
%\begin{align*}
$$\partial^\beta_\xi q_{m,p}^n\Big(\frac{x+y}{2},i\psi_0^{\tilde{\mathbf m},h}(x,y)\Big)  \in i^{|\beta|}\R$$
and
$$ \partial^\beta_\xi g_n\Big(\frac{x+y}{2},i\psi_0^{\tilde{\mathbf m},h}(x,y)\Big)  \in i^{|\beta|}\R^d.$$
%\end{align*}
In particular, $U$ defined in \eqref{U} sends $B(\mathbf s,2r)$ in $\R^d$.
\end{lem}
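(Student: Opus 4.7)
The plan is to reduce everything to the single observation that $q_n(x, i\eta) \in \R^{d \times d}$ for $x \in B(\mathbf s, 2r)$ and $\eta \in \R^d$; once this is in hand, both derivative claims follow by writing $\xi$-derivatives as $\eta$-derivatives of a real function and tracking factors of $i$, and the statement on $g_n$ is obtained from \eqref{g0}--\eqref{gn} by the same bookkeeping.

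First I would establish the core reality of $q_n(x, i\eta)$. Each $q_n$ inherits from $q^h$ three properties: analyticity in $\xi$ on $\Sigma_\kappa$ from Hypothesis~\ref{hyporw}(d); evenness $q_n(x,-\xi) = q_n(x,\xi)$ from Hypothesis~\ref{hyporw}(c) combined with the uniqueness of the classical expansion; and real-valuedness at real $\xi$, which is a consequence of $\widehat Q$ being a real self-adjoint pseudo-differential operator (the Weyl symbol of a real operator satisfies $\overline{q^h(x,\xi)} = q^h(x,-\xi)$, which together with evenness forces $\overline{q^h} = q^h$ on $\R^d\times\R^d$). Expanding $q_n(x,\cdot)$ in its convergent Taylor series around $0$ inside $\Sigma_\kappa$, evenness kills every odd-$|\alpha|$ term while real-valuedness forces the matrix coefficients to be real. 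Setting $\xi = i\eta$ then produces only powers $i^{|\alpha|} = \pm 1$, so $q_n(x, i\eta) \in \R^{d \times d}$.

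Next I would deduce the derivative statement. The smooth map $\eta \mapsto q_n(x, i\eta)$ is real-valued on $\R^d$, hence so are all its $\eta$-derivatives. The chain rule reads $\partial_\eta^\beta[q_n(x,i\eta)] = i^{|\beta|}(\partial_\xi^\beta q_n)(x,i\eta)$, so $(\partial_\xi^\beta q_n)(x,i\eta) \in i^{-|\beta|}\R^{d\times d} = i^{|\beta|}\R^{d\times d}$. Applied entry-wise to $\eta = \psi_0^{\tilde{\mathbf m},h}(x,y)$, which is real by \eqref{psi0} since $\nabla W$, $\ell_0^{\mathbf s}$ and $\nabla\ell_0^{\mathbf s}$ are real, this gives the first assertion. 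For $g_n$, I would first check $g_n(x, i\eta) \in \R^d$ by induction on $n$ and then reuse the chain-rule argument. Formula \eqref{g0} gives $g_0(x, i\eta) = (\eta^t + \nabla W^t)q_0(x, i\eta) \in \R^d$. For $n \geq 1$, \eqref{gn} splits $g_n(x, i\eta)$ into $(\eta^t + \nabla W^t) q_n(x, i\eta)$ (real), $-\nabla_x^t q_{n-1}(x, i\eta)$ (real), and terms $i^k c_{k,\beta}(x) \partial_\xi^\beta q_{n-k}(x, i\eta)$ with $|\beta| = k$; by the previous paragraph this last piece equals $i^k \cdot i^k \R = (-1)^k \R$, again real.

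The statement on $U$ is then immediate from \eqref{U}: the first summand $q_0(x, i(\nabla W + \ell_0\nabla\ell_0))\nabla\ell_0$ is real by the first step, and each summand of the second sum has the form $(\text{real})\cdot i\cdot \partial_\xi q_{m,p}^0(x, i(\cdots))\cdot (\text{real})$, where $\partial_\xi q_{m,p}^0(x, i(\cdots)) \in i\R^d$ by the $|\beta|=1$ case, so $i \cdot i\R^d = \R^d$. The only delicate point in the argument is the initial reality claim for $q_n$ on the imaginary slice, which hinges on the real-valuedness of the Weyl symbol $q^h$ on $\R^d \times \R^d$; once that is accepted, everything else is a systematic count of powers of $i$ together with a routine induction on the index $n$.
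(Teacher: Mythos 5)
Your argument is correct and follows the same route as the paper: a Taylor expansion of $q_n$ in $\xi$ around $0$, evenness killing the odd-degree terms, and reality of the Taylor coefficients landing the result in $i^{|\beta|}\R$, with your chain-rule step $\partial^\beta_\eta\big[q_n(x,i\eta)\big]=i^{|\beta|}(\partial^\beta_\xi q_n)(x,i\eta)$ being a cosmetic repackaging of the paper's direct power-of-$i$ count inside the expanded series. One imprecision worth flagging: you derive entrywise real-valuedness of $q^h$ on $\R^d\times\R^d$ from $\widehat Q$ being a \emph{real} operator, which is not among the stated assumptions (Hypothesis \ref{hyporw} only asserts self-adjointness, and for a matrix-valued Weyl symbol that gives Hermitianity, not entrywise reality) --- but the paper's own proof tacitly uses the same reality of $\partial_\xi^{\gamma+\beta}q^n_{m,p}(\cdot,0)$ without justification, so this is a shared omission rather than a defect you introduced.
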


\begin{proof} %
Since $\ell_0$ vanishes at $\mathbf s$, we can suppose that $r$ is such that $i\psi_0(x,y)$ is in 
\begin{align}\label{d0tau}
D(0, 1)^d=\{z\in \C \, ; \, |z|<1\}^d
\end{align}
so by analyticity and using the parity of $q_{m,p}^n$%\color{red} (ok grace a parité $m^h$ si les $m_n$ dépendent pas de $h$)\color{black}
, we have
\begin{align*}
\partial^\beta_\xi q_{m,p}^n\Big(\frac{x+y}{2},i\psi_0(x,y)\Big)=\mathop{\sum_{\gamma \in \N^d;}}_{\substack{|\gamma|+|\beta|\in 2\N}} i^{|\gamma|}\frac{\partial_\xi^{\gamma+\beta} q_{m,p}^n\big(\frac{x+y}{2},0\big)}{\gamma !}\,\psi_0(x,y)^\gamma \in i^{|\beta|}\R.
\end{align*}
The result for $g_n$ follows easily using \eqref{g0} and \eqref{gn}.
\end{proof}
\hip
Using this Lemma, we also get the following result (see \cite{me} Appendix D for a proof).

\begin{lem}\label{rreelle}
The term $R_j(\ell_0^{\mathbf s,\tilde{\mathbf m}}, \dots, \ell_{j-1}^{\mathbf s,\tilde{\mathbf m}})$ from Lemma \ref{expom} is real valued.
Moreover, it satisfies 
$$R_j(\ell_0^{\mathbf s,\tilde{\mathbf m}}, \dots, \ell_{j-1}^{\mathbf s,\tilde{\mathbf m}})=-R_j(-\ell_0^{\mathbf s,\tilde{\mathbf m}}, \dots, -\ell_{j-1}^{\mathbf s,\tilde{\mathbf m}}).$$
\end{lem}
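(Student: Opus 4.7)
The plan is to exploit two structural symmetries of the general expansion formula for $\omega_j$ established at the end of the proof of Lemma \ref{expom}:
\begin{align*}
\omega_j(x)=\sum_{n_1+n_2+n_3+n_4=j}\frac{1}{i^{n_1}n_1!}\big(\partial_{y} \cdot \partial_\xi \big)^{n_1} \Big( g_{n_2,n_3}(x,y,\xi) \nabla \ell_{n_4}(y) \Big) \Bigg|_{\substack{y=x\\\xi =0}}.
\end{align*}
A key preliminary observation is that setting $\ell_j\equiv 0$ annihilates each of the four explicit terms displayed in \eqref{omj} (they all contain $\ell_j$ or $\nabla \ell_j$), so
$R_j(\ell_0,\dots,\ell_{j-1})=\omega_j\big|_{\ell_j=0}.$
It therefore suffices to show that $\omega_j$ is both real-valued and odd under the global parity flip $\ell_k\mapsto -\ell_k$.

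For the reality, I would first strengthen Lemma \ref{ureelle} to the claim that for every $n_2,n_3$ and every multi-index $\alpha$,
\begin{align*}
\partial_\xi^\alpha g_{n_2,n_3}(x,y,0)\in i^{|\alpha|}\R^d.
\end{align*}
The case $n_3=0$ is Lemma \ref{ureelle}. For $n_3\geq 1$ I would argue inductively from \eqref{gnj}: the main term $iD_\xi g_{n_2}(\cdot,\xi+i\psi_0)(\psi_{n_3})$ consumes one extra $\xi$-derivative but comes with a compensating factor $i$, and the remainder $R^1_{n_3}$ arises from higher-order Taylor terms of the form $i^k D_\xi^k g_{n_2}(\cdot,\xi+i\psi_0)(\psi_{j_1},\dots,\psi_{j_k})$ with $\sum j_l=n_3$, $j_l<n_3$, each evaluated on the real quantities $\psi_{j_l}$, hence inheriting the invariant. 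Additional $y$-derivatives preserve the parity since $\psi_0$ is real. Armed with this, the multinomial identity $(\partial_y\cdot \partial_\xi)^{n_1}=\sum_{|\alpha|=n_1}\tfrac{n_1!}{\alpha!}\partial_y^\alpha \partial_\xi^\alpha$, combined with Leibniz (and with the fact that $\nabla \ell_{n_4}$ is $\xi$-independent, so all $n_1$ $\xi$-derivatives fall on $g_{n_2,n_3}$) shows that each summand of $\omega_j$ has the form $i^{-n_1}\cdot i^{n_1}\cdot(\text{real})\in \R$, so $\omega_j\in \R$.

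For the oddness, I would track the parity under $\ell_k\mapsto -\ell_k$. From \eqref{psi0}-\eqref{psij}, each $\psi_k$ is a sum of products of two $\ell$-odd factors (of the form $\ell_m\nabla \ell_{k-m}$, plus the purely geometric term $\nabla W$ when $k=0$), and is therefore $\ell$-even. By \eqref{gn0}-\eqref{gnj} and the same Taylor-expansion bookkeeping as in the reality step, $g_{n_2,n_3}$ depends on the $\ell_k$'s only through the $\psi_l$'s and is thus $\ell$-even. Consequently every product $g_{n_2,n_3}\nabla \ell_{n_4}$ is $\ell$-odd, and since $(\partial_y\cdot \partial_\xi)^{n_1}$ preserves parity in $\ell$, each summand of $\omega_j$, and therefore $\omega_j$ itself, is $\ell$-odd.

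The main obstacle I expect is the careful inductive bookkeeping inside $R^1_{n_3}$: one must verify that each additional $D_\xi$ extracted from the higher-order Taylor expansion of $g_{n_2}$ around $\xi+i\psi_0$ is properly compensated by the factor $i^k$ coming from $(i(\psi-\psi_0))^{\otimes k}$, so that both invariants — the $i^{|\alpha|}\R^d$ parity of $\partial_\xi^\alpha g_{n_2,n_3}(x,y,0)$ and the $\ell$-evenness of $g_{n_2,n_3}$ — propagate cleanly through the recursion. Once this bookkeeping is in place, the two claims reduce to routine parity counts on the general formula for $\omega_j$.
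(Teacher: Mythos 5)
The paper itself does not present a proof of this lemma, deferring instead to Appendix D of the reference \cite{me}, so there is no internal proof to compare against; your argument is correct and is surely what that appendix carries out. The key observation $R_j(\ell_0,\dots,\ell_{j-1})=\om_j\big|_{\ell_j=0}$ (all four displayed terms of \eqref{omj} carry a factor $\ell_j$ or $\nabla\ell_j$) is exactly right, and the two parity invariants you propagate — the strengthening $\partial_\xi^\alpha g_{n_2,n_3}(x,y,0)\in i^{|\alpha|}\R^d$ of Lemma \ref{ureelle}, and the $\ell$-evenness of every $\psi_k$ and hence of every $g_{n_2,n_3}$ — do survive the Taylor-expansion recursion, since the $k$-th order remainder term $\frac{i^k}{k!}D_\xi^k g_{n_2}(\cdot,\xi+i\psi_0)(\psi_{j_1},\dots,\psi_{j_k})$ contributes $i^{|\alpha|+k}\cdot i^k=i^{|\alpha|}\cdot(-1)^k$ to the relevant quantity (so reality is preserved) and depends on the $\ell_l$'s only through the $\ell$-even $\psi_{j_l}$'s (so $\ell$-parity is preserved). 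Dividing each summand of $\om_j$ by $i^{n_1}$ with $n_1=|\alpha|$ then gives reality, and the single $\ell$-odd factor $\nabla\ell_{n_4}$ gives oddness; restricting to $\ell_j=0$, which is fixed under the sign flip, transfers both properties to $R_j$.
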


\hip
In view of the results from Proposition $\ref{phf}$ and Lemma \ref{expom}, we want to find $\ell^{\mathbf s,\tilde{\mathbf m}}$ such that on $B(\mathbf s, 2r)$,
\begin{align}\label{eikon}
q_0\Big(x,i\big(\nabla W +\ell_0 \, \nabla \ell_0\big)\Big)\big(2\nabla W+\ell_0 \nabla \ell_0\big)\cdot \nabla\ell_0=0
\end{align}
and for $j \geq 1$
\begin{align}\label{transport}
 \bigg(
				q_0\Big(x,i\big(\nabla W +\ell_0 \, \nabla \ell_0\big)\Big)( 2\nabla W+\ell_0 \nabla \ell_0)+\ell_0\, U
				\bigg) \cdot \nabla \ell_j+ U \cdot \nabla \ell_0  \, \ell_j+R_j(\ell_0, \dots, \ell_{j-1})=0
\end{align}
where $U$ was introduced in \eqref{U}.
Note that Lemmas \ref{ureelle} and \ref{rreelle} ensure that the fact that the $(\ell_j)_{j\geq 0}$ are real valued is compatible with equations \eqref{transport}.

\subsection{Solving for $\ell_0^{\mathbf s,\tilde{\mathbf m}}$}
Denote $$p(x,\xi)= (-i \, \xi^t+\, \nabla W^t)q_0(x,\xi) (i\xi+\nabla W)= q_0(x,\xi) \, \xi \cdot \xi +  q_0(x,\xi)\, \nabla W \cdot \nabla W$$ the principal symbol of the whole operator $P_h$ and $\tilde p(x,\xi)=-p(x,i\xi)$ its complexification.
Notice that thanks to item \ref{q0id} from Hypothesis \ref{hyporw}, the quadratic approximation of $\tilde p$ at $(\mathbf s,0)$ coincides with the one of the complexification of the symbol of the Schr\"odinger operator $-h^2\Delta + |\nabla W|^2$ (up to a factor $\varrho$).
Hence, we get all the results from \cite{DimassiSjostrand}, chapter 3.
In particular, denoting
%We have 
%$$\tilde p$$
%After computing the Hamiltonian of $\tilde p$
%%The Hamiltonian of $\tilde p$ is 
%%\begin{align*}
%%H_{\tilde p}&=\begin{pmatrix} 
%%		\partial_{(\xi, \xi)}\tilde p \\
%%		-\partial_{(x,v)}\tilde p
%%		\end{pmatrix}
%%		=\begin{pmatrix}
%%		v\\
%%		-\partial_x V+\displaystyle\sum_{1\leq p,q \leq d}m^0_{p,q}(x,v,i\xi)\,(\xi_p e_q+\xi_q e_p) +i\sum_{1\leq p,q \leq d}(\xi_p \xi_q-v_pv_q/4)\partial_\xi m^0_{p,q}(x,v,i\xi)  \\
%%		\mathrm{Hess}_xV \xi- \displaystyle\sum_{1\leq p,q \leq d}(\xi_p \xi_q -v_pv_q/4)\,\partial_x m^0_{p,q}(x,v,i\xi)\\
%%		-\xi+\frac14 \displaystyle\sum_{1\leq p,q \leq d}m^0_{p,q}(x,v,i\xi)\,(v_p e_q+v_q e_p) -\sum_{1\leq p,q \leq d}(\xi_p \xi_q-v_pv_q/4)\nabla m^0_{p,q}(x,v,i\xi)  
%%		\end{pmatrix}
%%\end{align*}
%which vanishes at $(\mathbf s,0)$, we find that its linearization at this point is the matrix
%\begin{align*}
%F&=2\begin{pmatrix} 
%		0 & \mathrm{Id} \\
%		 \mathcal W_{\mathbf s} ^2 & 0
%		\end{pmatrix}.
%\end{align*}
%%$$\color{red}------------------------- \color{black}$$
%One can easily check that $\mathrm{Spec} (F)=-\mathrm{Spec} (2\mathcal W_{\mathbf s})\cup \mathrm{Spec} (2\mathcal W_{\mathbf s})$.
%In particular, $F$ has $d$ eigenvalues (counted with algebraic multiplicity) in $\{\mathrm{Re}\,z >0\}$ while the $d$ others are in $\{\mathrm{Re}\,z <0\}$.
%Therefore we can apply the stable manifold theorem to get %the existence of  
%that the stable manifolds associated to $H_{\tilde p}$ given in a neighborhood of $(\mathbf s,0)$ by 
$$\Lambda_\pm=\Big\{(x,\xi)\, ; \lim_{t\to \mp \infty}\e^{tH_{\tilde p}}(x,\xi)=(\mathbf s,0)\Big\}$$
the stable manifolds associated to the Hamiltonian of ${\tilde p}$ near $(\mathbf s, 0)$, we obtain the following.
\begin{lem}\label{defpos}
There exist $\phi_\pm \in \mathcal C^\infty(B(\mathbf s,2r),\R)$ vanishing together with their gradients at $\mathbf s$ and such that
\begin{align}\label{lamphi}
\Lambda_\pm=\Big\{\big(x, \nabla\phi_\pm (x) \big) \, ; x\in B(\mathbf s,2r)\Big\}.
\end{align}
Moreover, the Hessian matrix of $\pm \phi_\pm$ at $\mathbf s$ is positive definite.
\end{lem}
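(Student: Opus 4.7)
The approach is to reduce the statement to the standard stable/unstable manifold theorem for the Hamilton flow of a real principal symbol at a hyperbolic equilibrium, as worked out in \cite{DimassiSjostrand}, Chapter~3. The author has already identified the candidate: $\tilde p(x,\xi)=-p(x,i\xi)=q_0(x,i\xi)\,\xi\cdot\xi-q_0(x,i\xi)\,\nabla W\cdot\nabla W$, and the remark just before the statement asserts that its quadratic part at $(\mathbf s,0)$ agrees, up to the factor $\varrho$, with that of the complexified Schrödinger symbol $\xi^2-|\nabla W|^2$. So the real task is to verify that $(\mathbf s,0)$ is indeed a hyperbolic fixed point of $H_{\tilde p}$ and then invoke the cited theory.

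First, I would record that $\tilde p(\mathbf s,0)=0$ and $\nabla_{x,\xi}\tilde p(\mathbf s,0)=0$, which are immediate from $\nabla W(\mathbf s)=0$, and compute the quadratic approximation. Using $q_0(\mathbf s,0)=\varrho\,\mathrm{Id}$ from Hypothesis~\ref{hyporw}\ref{q0id}, a second–order Taylor expansion in $(x-\mathbf s,\xi)$ yields
\begin{equation*}
\tilde p(x,\xi)=\varrho\bigl(|\xi|^2-|\mathcal W_{\mathbf s}(x-\mathbf s)|^2\bigr)+O\bigl(|(x-\mathbf s,\xi)|^3\bigr),
\end{equation*}
since all mixed $x$–$\xi$ second derivatives and all $\xi$-derivatives of the coefficient $q_0(x,i\xi)\nabla W\cdot\nabla W$ vanish at $(\mathbf s,0)$ (the first factor is even in $\xi$, and $\nabla W$ vanishes at $\mathbf s$), and likewise for the first term. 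The linearization of $H_{\tilde p}$ at $(\mathbf s,0)$ is then
\begin{equation*}
\begin{pmatrix}\dot x\\ \dot\xi\end{pmatrix}=2\varrho\begin{pmatrix}0 & I\\ \mathcal W_{\mathbf s}^2 & 0\end{pmatrix}\begin{pmatrix}x-\mathbf s\\ \xi\end{pmatrix},
\end{equation*}
whose spectrum consists of the $d$ reals $\pm 2\varrho|\mu_i|$, where the $\mu_i$ are the eigenvalues of $\mathcal W_{\mathbf s}$ (all nonzero because $W$ is Morse). Thus $(\mathbf s,0)$ is hyperbolic, with $d$ positive and $d$ negative eigenvalues exactly.

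With hyperbolicity in hand, the Hamiltonian stable-manifold theorem (as presented in \cite{DimassiSjostrand}, Ch.~3) produces smooth outgoing and incoming Lagrangian manifolds $\Lambda_\pm$ passing through $(\mathbf s,0)$, each of dimension~$d$, tangent at $(\mathbf s,0)$ to the corresponding eigenspace of the linearization; in particular they project diffeomorphically onto a neighborhood of $\mathbf s$ in $x$-space. Since each $\Lambda_\pm$ is Lagrangian and a graph over $x$, there exist $\phi_\pm\in\mathcal C^\infty(B(\mathbf s,2r),\R)$, uniquely determined up to additive constants that I fix by $\phi_\pm(\mathbf s)=0$, such that $\Lambda_\pm=\{(x,\nabla\phi_\pm(x))\}$. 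That $\nabla\phi_\pm(\mathbf s)=0$ follows from $(\mathbf s,0)\in\Lambda_\pm$.

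Finally, the sign of $\mathrm{Hess}\,\phi_\pm(\mathbf s)$ comes from the algebraic Riccati equation for the generating function of the stable/unstable Lagrangian. Substituting $\xi=\nabla\phi_\pm(x)$ into the linearized Hamilton equations, the induced dynamics on $\Lambda_\pm$ are $\dot x=2\varrho\,\mathrm{Hess}\,\phi_\pm(\mathbf s)\,(x-\mathbf s)+\cdots$; since $\Lambda_+$ is the outgoing manifold (trajectories leave $(\mathbf s,0)$ in forward time) this matrix must be positive, while on the incoming $\Lambda_-$ it must be negative. Hence $\pm\phi_\pm$ has positive definite Hessian at $\mathbf s$.

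The only place where genuine work is required is the symbol-level verification that the higher-order terms in the expansion of $\tilde p$ do not corrupt the quadratic picture—i.e., that the coefficient $q_0(x,i\xi)$ and its $\xi$-derivatives are real on the relevant slice (this is ensured by Lemma~\ref{ureelle} once one notes $\psi_0(\mathbf s,\mathbf s)=0$) and that $q_0(x,i\xi)\nabla W\cdot\nabla W$ has no quadratic cross terms at $(\mathbf s,0)$. Everything else is a direct appeal to \cite{DimassiSjostrand}; no new ingredient beyond Hypothesis~\ref{hyporw}\ref{q0id} and the Morse property of $W$ is needed.
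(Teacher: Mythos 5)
Your argument is correct and follows exactly the route the paper intends: the paper's "proof" of Lemma \ref{defpos} consists solely of the remark preceding it that the quadratic approximation of $\tilde p$ at $(\mathbf s,0)$ coincides (up to the factor $\varrho$) with that of the complexified Schr\"odinger symbol, together with the appeal to \cite{DimassiSjostrand}, chapter~3; your computation of the linearization of $H_{\tilde p}$, the hyperbolicity check, and the sign analysis of $\mathrm{Hess}\,\phi_\pm(\mathbf s)$ simply spell out what lies behind that citation.
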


%Here we use some of the notations and results from Appendix \ref{F}.
%First notice that combinig \eqref{tangent} and \eqref{lamphi}, we can deduce 
%$$\big\{\big(z,\mathrm{Hess}_{(\mathbf s,0)}\phi_- z\big)\, ; z \in \C^{2d}\big\}=\bigoplus_{\mathrm{Re}\, \lambda >0}E_\lambda.$$
%Hence %for $(z,\tilde z) \in E_\lambda$ where Re $\lambda >0$, 
%for $z \in \C^{2d}$, there exists $(\alpha_\lambda)_{\mathrm{Re}\, \lambda >0} \in \C^{2d}$ such that 
%$$z=\sum_{\mathrm{Re}\, \lambda >0} \alpha_\lambda e_{\lambda,1}\qquad \text{ and } \qquad \mathrm{Hess}_{(\mathbf s,0)}\phi_- z=\sum_{\mathrm{Re}\, \lambda >0} \alpha_\lambda e_{\lambda,2}$$
%where $(e_{\lambda,1}, e_{\lambda,2})\in E_\lambda$.
%Thus, using \eqref{propre}, \eqref{carac} and the orthogonality of the eigenspaces of $\mathrm{Hess}_{\mathbf s}V$, we see that
%$$\langle \mathrm{Hess}_{(\mathbf s,0)}\phi_- z, z \rangle=\sum_{\lambda, \lambda'} \alpha_\lambda \alpha_{\lambda'}\langle e_{\lambda,2}, e_{\lambda',1} \rangle=\sum_{\mathrm{Re}\,\lambda >0} |\alpha_\lambda|^2\langle e_{\lambda,2}, e_{\lambda,1} \rangle$$
%and using once again \eqref{propre} and \eqref{carac} as well as the fact that if $\lambda \in S_\eta$ with $\eta>m^2/4$, then $|\lambda|^2=\eta$, one can easily check that $\langle e_{\lambda,2}, e_{\lambda,1} \rangle<0$ for all $(e_{\lambda,1}, e_{\lambda,2})\in E_\lambda$.\\
%The same proof holds for $\phi_-$ by changing some of the signs.
%\end{proof}
%$$\color{green}------------------------- \color{black}$$
\hip
At this point, one can proceed as in \cite{BonyLPMichel}, Lemmas 3.2 and 3.3 to establish the following Proposition after matching the notations by setting $\Lambda(\mathbf s)=2\varrho\, \mathcal W_s$, $b^0=0$, $A^0(\mathbf s)=\varrho \, \mathrm{Id}$ and
$B(\mathbf s)=0
$.

\begin{prop}\label{phinu}
Recall the notation \eqref{hessiennes}.
There exists $\ell_0^{\mathbf s,\tilde{\mathbf m}} \in \mathcal C^\infty(B(\mathbf s,2r),\R)$ such that 
\begin{enumerate}[label=\textbullet]
\item For $x \in B(\mathbf s,2r)$, 
$$\phi_+(x)=W(x)-W(\mathbf s)+\frac{\ell_0^{\mathbf s,\tilde{\mathbf m}}(x)^2}{2}.$$
In particular, $\ell_0^{\mathbf s,\tilde{\mathbf m}}$ vanishes at $\mathbf s$.
\item 
The function $\ell_0^{\mathbf s,\tilde{\mathbf m}}$ is a solution of \eqref{eikon} in $B(\mathbf s,2r)$.
\item The vector $\nabla \ell_0^{\mathbf s,\tilde{\mathbf m}} (\mathbf s)$ that we denote $\nu^{\mathbf s,\tilde{\mathbf m}}$ is not 0 and satisfies 
$$2 \mathcal W_s \nu^{\mathbf s,\tilde{\mathbf m}}=- \big|\nu^{\mathbf s,\tilde{\mathbf m}} \big|^2 \,\nu^{\mathbf s,\tilde{\mathbf m}}.$$
%In particular, since $ \mathcal W_s$ is invertible, $\nu_2^{\mathbf s,\tilde{\mathbf m}}\neq 0$.
\item Finally,
$$\det \bigg(\mathrm{Hess}_{\mathbf s}\bigg(W+\frac{(\ell_0^{\mathbf s,\tilde{\mathbf m}})^2}{2}\bigg)\bigg)=\big|\det \mathcal W_{\mathbf s}\big|.$$
\end{enumerate}
\end{prop}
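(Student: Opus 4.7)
The plan is to define $\ell_0^{\mathbf s,\tilde{\mathbf m}}$ as a smooth square root of $2(\phi_+ - W + W(\mathbf s))$ in $B(\mathbf s, 2r)$, and then check each of the four bullet points in turn. The crux is a Hessian computation that exploits the linearization of the Hamiltonian flow of $\tilde p$ at $(\mathbf s,0)$.

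First, I would compute $\mathrm{Hess}(\phi_+)|_{\mathbf s}$ from the identification $\Lambda_+ = \{(x, \nabla \phi_+(x))\}$. Using Hypothesis \ref{hyporw}\ref{q0id}, the quadratic approximation of $\tilde p$ at $(\mathbf s, 0)$ is $\varrho(|\xi|^2 - (x-\mathbf s)^t \mathcal W_{\mathbf s}^2 (x-\mathbf s))$, so the linearized Hamiltonian matrix is
\[
F = \begin{pmatrix} 0 & 2\varrho\,\mathrm{Id} \\ 2\varrho\,\mathcal W_{\mathbf s}^2 & 0 \end{pmatrix},
\]
whose unstable subspace (tangent to $\Lambda_+$) is the graph of $|\mathcal W_{\mathbf s}|:=\sqrt{\mathcal W_{\mathbf s}^2}$. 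Hence $\mathrm{Hess}(\phi_+)|_{\mathbf s}=|\mathcal W_{\mathbf s}|$, and $f:=\phi_+ - W + W(\mathbf s)$ has Hessian $|\mathcal W_{\mathbf s}|-\mathcal W_{\mathbf s}$ at $\mathbf s$. Since $\mathbf s$ is a Morse index-$1$ critical point of $W$, there is a unique negative eigenvalue $\mu_1<0$ with eigenvector $e_1$, and the difference is exactly $2|\mu_1|\, e_1 e_1^t$, a rank-one positive-semidefinite matrix.

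Next, a Morse-type splitting (apply the parametric Morse lemma in the direction $e_1$, with parameters the remaining $d-1$ coordinates, in which $f$ is identically zero to second order along $\mathbf s$ but everywhere positive by Lemma \ref{defpos}) produces a smooth real-valued $\ell_0^{\mathbf s,\tilde{\mathbf m}}$ on $B(\mathbf s,2r)$ with $f=(\ell_0^{\mathbf s,\tilde{\mathbf m}})^2/2$, vanishing at $\mathbf s$, and $\nabla \ell_0^{\mathbf s,\tilde{\mathbf m}}(\mathbf s)=\pm\sqrt{2|\mu_1|}\, e_1$. This gives the first bullet; the sign is fixed so that $\ell_0^{\mathbf s,\tilde{\mathbf m}}>0$ on the component of $\{W<W(\mathbf s)\}\cap B(\mathbf s,2r)$ containing $\tilde{\mathbf m}$ (using Lemma \ref{1.4}).

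For the eikonal equation, I would use that $\tilde p\equiv 0$ on $\Lambda_+$, i.e.\ $p(x, i\nabla\phi_+)=0$ on $B(\mathbf s,2r)$. Substituting $\nabla\phi_+=\nabla W + \ell_0^{\mathbf s,\tilde{\mathbf m}}\nabla \ell_0^{\mathbf s,\tilde{\mathbf m}}$ into
$p(x,\xi)=q_0(x,\xi)(i\xi+\nabla W)\cdot (-i\xi+\nabla W)$
and expanding gives
\[
0 = p(x,i\nabla\phi_+) = -\ell_0^{\mathbf s,\tilde{\mathbf m}}\, q_0\bigl(x, i\nabla\phi_+\bigr)\bigl(2\nabla W + \ell_0^{\mathbf s,\tilde{\mathbf m}}\nabla \ell_0^{\mathbf s,\tilde{\mathbf m}}\bigr)\cdot\nabla \ell_0^{\mathbf s,\tilde{\mathbf m}}.
\]
Dividing by $\ell_0^{\mathbf s,\tilde{\mathbf m}}$ on the dense open set $\{\ell_0^{\mathbf s,\tilde{\mathbf m}}\neq 0\}$ and invoking smoothness of the right-hand factor yields \eqref{eikon} throughout $B(\mathbf s,2r)$, proving the second bullet.

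Setting $\nu:=\nabla \ell_0^{\mathbf s,\tilde{\mathbf m}}(\mathbf s)=\pm\sqrt{2|\mu_1|}\,e_1$, we have $|\nu|^2=2|\mu_1|$ and $\mathcal W_{\mathbf s}\nu=\mu_1\nu=-\tfrac{|\nu|^2}{2}\nu$, giving the third bullet. For the determinant, $\mathrm{Hess}_{\mathbf s}(W+(\ell_0^{\mathbf s,\tilde{\mathbf m}})^2/2)=\mathcal W_{\mathbf s}+\nu\nu^t$ acts as $\mu_i$ on $e_i$ for $i\geq 2$ and as $\mu_1+2|\mu_1|=|\mu_1|$ on $e_1$, so its determinant is $|\mu_1|\prod_{i\geq 2}\mu_i=|\det \mathcal W_{\mathbf s}|$. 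The main obstacle is the rank-one splitting step: because $\mathrm{Hess}(f)|_{\mathbf s}$ is degenerate, the standard Morse lemma does not apply directly, and one has to rely on a parametric Morse factorization (as in the cited Lemma~3.2 of \cite{BonyLPMichel}) to produce $\ell_0^{\mathbf s,\tilde{\mathbf m}}$ as a genuine smooth function, rather than just a continuous one.
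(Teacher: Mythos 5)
Your proposal is correct and follows essentially the same route as the paper, which itself only says "proceed as in \cite{BonyLPMichel}, Lemmas 3.2 and 3.3" after matching notations: you compute $\mathrm{Hess}_{\mathbf s}\phi_+ = |\mathcal W_{\mathbf s}|$ from the unstable subspace of the linearized Hamiltonian flow, obtain $\ell_0$ from the rank-one Morse splitting of $\phi_+ - W + W(\mathbf s)$, and deduce the eikonal equation from $\tilde p \equiv 0$ on $\Lambda_+$. One small imprecision: the nonnegativity of $f = \phi_+ - W + W(\mathbf s)$ near $\mathbf s$ is not a consequence of Lemma~\ref{defpos} (which concerns only $\mathrm{Hess}_{\mathbf s}\phi_+$); it is established inside the parametric Morse factorization of \cite{BonyLPMichel}, Lemma~3.2 by exploiting the Hamilton--Jacobi equation $q_0\bigl(2\nabla W + \nabla f\bigr)\cdot\nabla f = 0$, which you implicitly invoke when you cite that lemma anyway.
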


%\begin{proof} The proof is the same as in \cite{BonyLPMichel}, Lemma 3.3 after matching the notations by setting $\Lambda(\mathbf s)=2 \mathcal W_s$, $b^0=H_W$,
%$$A^0(\mathbf s)=\begin{pmatrix}0&0\\
%0&q_0(\mathbf s,0,0)
%\end{pmatrix} \quad \text{and }\quad
%B(\mathbf s)=\begin{pmatrix}0&\mathrm{Id}\\
%-\mathrm{Hess}_{\mathbf s}V&0
%\end{pmatrix}. 
%$$
%In particular, it is by a Taylor expansion at $(\mathbf s,0)$ in \eqref{eikon} that we get 
%$$\begin{pmatrix}
%x-\mathbf s\\
%v
%\end{pmatrix}\cdot \bigg[ \begin{pmatrix}
%0&-\mathrm{Hess}_{\mathbf s} V\\
%\mathrm{Id}&0
%\end{pmatrix}\nu^{\mathbf s,\tilde{\mathbf m}} +\begin{pmatrix}
%0\\
%q_0(\mathbf s,0,0) \nu_2^{\mathbf s,\tilde{\mathbf m}}
%\end{pmatrix}+q_0(\mathbf s,0,0) \nu_2^{\mathbf s,\tilde{\mathbf m}} \cdot \nu_2^{\mathbf s,\tilde{\mathbf m}} \,\nu^{\mathbf s,\tilde{\mathbf m}} \bigg]=0
%$$
%from which we deduce that $\nu^{\mathbf s,\tilde{\mathbf m}}$ is an eigenvector of $2 \mathcal W_s$ associated to the eigenvalue $-q_0(\mathbf s,0,0) \nu_2^{\mathbf s,\tilde{\mathbf m}} \cdot \nu_2^{\mathbf s,\tilde{\mathbf m}}$.
%\end{proof}

\subsection{Solving for $(\ell_j^{\mathbf s,\tilde{\mathbf m}})_{j\geq 1}$}

Once again we drop some exponents $\mathbf s$ and $\tilde{\mathbf m}$ for shortness.
Now that $\ell_0$ is given by Proposition \ref{phinu}, we can solve the transport equations \eqref{transport} by induction, so we suppose that $\ell_0,\dots,\ell_{j-1}$ are given and we want to find a solution $\ell_j$ to \eqref{transport}.
Denote 
$$\widetilde U=q_0\Big(x,i\big(\nabla W +\ell_0 \, \nabla \ell_0\big)\Big)( 2\nabla W+\ell_0 \nabla \ell_0)+\ell_0\, U\in \mathcal C^\infty(B(\mathbf s,2r))$$
and 
$$\tau=\nabla \ell_0 \cdot U \in \mathcal C^\infty(B(\mathbf s,2r))$$
where $U$ was introduced in \eqref{U}.
The function $\ell_j$ must satisfy $(\widetilde U \cdot \nabla +\tau) \ell_j=-R_j(\ell_0,\dots,\ell_{j-1})$ so we are intersted in the operaor $\mathcal L=\widetilde U \cdot \nabla +\tau$ that we decompose as $\mathcal L=\mathcal L_0^{\mathbf s}+\mathcal L_>$
with 
\begin{align*}
\mathcal L_0^{\mathbf s}
%=\begin{pmatrix}
%				0&\mathrm{Id} \\
%				-\mathrm{Hess}_{\mathbf s}V +2q_0(\mathbf s,0,0)\nu_2\, {}^t\nu_1 & q_0(\mathbf s,0,0)(\mathrm{Id}+ 2 \nu_2\, {}^t\nu_2)
%				\end{pmatrix}
%		\begin{pmatrix}x-\mathbf s\\
%				v
%		\end{pmatrix}\cdot \nabla+q_0(\mathbf s,0,0) \nu_2 \cdot \nu_2\\
		=\widetilde U_0^{\mathbf s}\big(x-\mathbf s\big) \cdot \nabla+\tau_0^{\mathbf s}
\end{align*}
where $\widetilde U_0^{\mathbf s}$ is the differential of $\widetilde U$ at $\mathbf s$ and $\tau_0^{\mathbf s}=\tau(\mathbf s)$, that is with \eqref{hessiennes}
\begin{align}\label{alpha0} 
\widetilde U_0^{\mathbf s}=2\varrho \big( \mathcal W_s + \nu \nu ^t \big)  \qquad \text{and} \qquad  \tau_0^{\mathbf s}=\varrho \big|\nu^{\mathbf s,\tilde{\mathbf m}}\big|^2.
\end{align}
As usual, we will often omit the exponents $\mathbf s$ in the notations.
Notice that if we denote $\mathcal P^n_{hom}$ the space of homogeneous polynomials of degree $n$ in the variables $(x-\mathbf s)$, we have $\mathcal L_0 \in \mathscr L(\mathcal P^n_{hom})$
and for $P \in \mathcal P^n_{hom}$, $\mathcal L_>P(x) =O\big((x-\mathbf s)^{n+1}\big)$ near $\mathbf s$.
Using Proposition \ref{phinu}, it is easy to check that the spectrum of $\widetilde U_0^{\mathbf s}$ is exactly the spectrum of $2\varrho \mathcal W_s$ except that the negative eigenvalue $-\tau_0^{\mathbf s}$ is replaced by $\tau_0^{\mathbf s}$.
We can then apply Lemma A.1 from \cite{BonyLPMichel} to get that $\mathcal L_0^{\mathbf s}$ is invertible on $\mathcal P^n_{hom}$.
Thanks to this fact, one can proceed as in \cite{BonyLPMichel}, section 3.3 (see also \cite{DimassiSjostrand}, chapter 3), i.e find an approximate solution of \eqref{transport} using formal power series and then refine it into an actual solution using the characteristic method.
This gives the following result.
\begin{prop}
For all $j\geq 1$, there exists $\ell^{\mathbf s,\tilde{\mathbf m}}_j \in \mathcal C^\infty(B(\mathbf s,2r))$ solving \eqref{transport}.
Moreover, $\ell_j^{\mathbf s,\tilde{\mathbf m}}$ is real valued in view of Lemmas \ref{ureelle} and \ref{rreelle}.
\end{prop}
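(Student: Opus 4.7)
The plan is to build $\ell_j^{\mathbf s,\tilde{\mathbf m}}$ by the classical two-step scheme for transport equations near a hyperbolic critical point of the drift, along the lines outlined in \cite{BonyLPMichel}, Section 3.3: first produce a formal Taylor-series solution at $\mathbf s$, then upgrade the resulting Borel summation into an exact smooth solution by integrating along characteristics of $\widetilde U$. I would proceed by induction on $j$; the inductive hypothesis is that real-valued functions $\ell_0^{\mathbf s,\tilde{\mathbf m}},\dots,\ell_{j-1}^{\mathbf s,\tilde{\mathbf m}}\in \mathcal C^\infty(B(\mathbf s,2r))$ have been constructed, so that $R_j:=R_j(\ell_0^{\mathbf s,\tilde{\mathbf m}},\dots,\ell_{j-1}^{\mathbf s,\tilde{\mathbf m}})$ is a real-valued smooth given datum (Lemma \ref{rreelle}), and $\widetilde U$, $\tau$ are smooth and real on $B(\mathbf s,2r)$ (Lemma \ref{ureelle}).

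The key structural fact is that $\mathbf s$ is a hyperbolic source for the flow of $\widetilde U$: since $\nabla W(\mathbf s)=0$ and $\ell_0^{\mathbf s,\tilde{\mathbf m}}(\mathbf s)=0$, the field $\widetilde U$ vanishes at $\mathbf s$, and by \eqref{alpha0} its linearization reads $\widetilde U_0^{\mathbf s}=2\varrho\,(\mathcal W_{\mathbf s}+\nu\nu^{t})$. Using Proposition \ref{phinu}, the rank-one perturbation $2\varrho\,\nu\nu^{t}$ flips the sign of the sole negative eigenvalue of $2\varrho\,\mathcal W_{\mathbf s}$ (carried by $\nu$) while leaving its positive eigenvalues untouched, so $\widetilde U_0^{\mathbf s}$ is positive definite with a uniform spectral gap $c>0$. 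Lemma A.1 of \cite{BonyLPMichel} then applies verbatim and yields the invertibility of $\mathcal L_0\in \mathscr L(\mathcal P^n_{hom})$ for every $n\geq 0$.

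For the formal step I look for $\sum_{n\geq 0}P_n$, $P_n\in \mathcal P^n_{hom}$, such that $\mathcal L\big(\sum_n P_n\big)+R_j$ vanishes order by order at $\mathbf s$. Because $\mathcal L_>:=\mathcal L-\mathcal L_0$ strictly raises the degree of a homogeneous polynomial, projecting onto $\mathcal P^n_{hom}$ gives
\[
\mathcal L_0 P_n=F_n(P_0,\dots,P_{n-1};R_j),
\]
which the invertibility statement above solves uniquely at each order; real-valuedness of the data keeps every $P_n$ real. A Borel summation then provides $\tilde\ell_j\in \mathcal C^\infty(B(\mathbf s,2r),\R)$ whose Taylor series at $\mathbf s$ is $\sum_n P_n$, and by construction $\rho_j:=-R_j-\mathcal L\tilde\ell_j$ is real valued and flat at $\mathbf s$.

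It remains to solve $\mathcal L u=\rho_j$ with $u$ itself flat at $\mathbf s$. Denoting by $\Phi_t$ the flow of $\widetilde U$, the positivity of $\widetilde U_0^{\mathbf s}$ ensures that, after possibly shrinking $r$, the backward trajectories $t\mapsto \Phi_t(x)$, $t\leq 0$, stay in $B(\mathbf s,2r)$ and satisfy $|\Phi_t(x)-\mathbf s|\leq C\e^{ct}$ for some $c>0$. The Duhamel formula
\[
u(x)=\int_{-\infty}^{0}\rho_j\big(\Phi_t(x)\big)\,\exp\!\Big(\!\int_0^t \tau\big(\Phi_\sigma(x)\big)\,\D \sigma\Big)\,\D t
\]
then converges absolutely, since the flatness of $\rho_j$ at $\mathbf s$ dominates the at-most-exponential growth of the integrating factor, and $u$ is itself flat at $\mathbf s$ by uniqueness of the formal solution of $\mathcal L v=0$ with zero initial Taylor data. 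Setting $\ell_j^{\mathbf s,\tilde{\mathbf m}}:=\tilde\ell_j+u$ produces the required smooth real-valued solution to \eqref{transport}. The main technical obstacle is the $\mathcal C^\infty$ dependence of $u$ on $x$ through the improper integral, which reduces to differentiating under the integral sign after writing $\partial^\alpha u$ in terms of Jacobi fields along $\Phi$ and using the spectral gap of $\widetilde U_0^{\mathbf s}$ to control them uniformly on backward semi-infinite time.
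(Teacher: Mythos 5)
Your proposal follows the same route as the paper, which itself simply defers to \cite{BonyLPMichel}, Section 3.3 (and \cite{DimassiSjostrand}): invert $\mathcal L_0$ on $\mathcal P^n_{hom}$ via Lemma A.1 of \cite{BonyLPMichel} (applicable here because $\widetilde U_0^{\mathbf s}$ is positive definite), Borel-sum the resulting formal power series, then remove the flat remainder by integrating backwards along the characteristics of $\widetilde U$, keeping everything real by Lemmas \ref{ureelle}--\ref{rreelle}. One small correction: since $\tau>0$ near $\mathbf s$, the integrating factor $\exp\big(\int_0^t \tau\big)$ in fact \emph{decays} as $t\to -\infty$, so convergence of the Duhamel integral requires only boundedness of $\rho_j$; the flatness of $\rho_j$ is instead what guarantees that $u$ and all its derivatives vanish to infinite order at $\mathbf s$.
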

\vskip 0.07cm
\subsection{Construction of $\ell^{\mathbf s,\tilde{\mathbf m}}$}
Now that we have found $(\ell_j)_{j\geq 0} \subset \mathcal C^\infty(B(\mathbf s,2r),\R)$ solving \eqref{eikon} and \eqref{transport} with $\ell_0$ vanishing at $\mathbf s$, we can use a Borel procedure to construct $\ell \in \mathcal C^\infty(\R^d,\R)$ supported in $B(\mathbf s,3r)$ and satisfying $\ell \sim \sum_{j \geq 0}h^j \ell_j$ on $B(\mathbf s,2r)$. 
%as well as the last statement from Lemma \ref{phinu} to obtain the following:
%\begin{prop}\label{lexist}
%There exists a function $\ell$ satisfying all the properties listed in \eqref{hypol} and such that the coefficients from its classical expansion solve \eqref{eikon} and \eqref{transport}.
%\end{prop}
%Moreover, using Lemmas \ref{defpos} and \ref{l0exist}, we see that $\ell$ satisfies the assumptions of Lemma \ref{llisse}.
\begin{rema}\label{signl}
The properties \ref{hypol1}-\ref{hypoln-1} from \eqref{hypol} are satisfied by both the functions $\ell^{\mathbf s,\tilde{\mathbf m}}$ and $-\ell^{\mathbf s,\tilde{\mathbf m}}$.
Moreover, by Lemma \ref{rreelle}, $(-\ell_j^{\mathbf s,\tilde{\mathbf m}})_{j\geq 0}$ also solve \eqref{eikon} and \eqref{transport}. 
\end{rema}
\hip
At this point, a straightforward adaptation of the proof of Proposition 5.2 from \cite{me} yields the following result which states that all the properties from \eqref{hypol} are satisfied.

\begin{prop}\label{lexist}
We can choose the signs of the functions $(\ell^{\mathbf s,\tilde{\mathbf m}})_{\mathbf s \in \mathbf j^\alpha(\tilde{\mathbf m})}$ such that \eqref{hypol} holds true and the coefficients from the classical expansion of $\ell^{\mathbf s,\tilde{\mathbf m}}$ solve \eqref{eikon} and \eqref{transport}.
%Suppose that the $(\ell^{\mathbf s,\tilde{\mathbf m}})_{(\mathbf s,0)\in \mathbf j(m)}$ have been chosen such that there exists a smooth function $\phi$ admitting a non degenerate minimum at $(\mathbf s,0)$ and satisfying 
%$$\phi=W-W(\mathbf s,0)+\frac{(\ell^{\mathbf s,\tilde{\mathbf m}})^2}{2}+O(h)\qquad \text{on }B(\mathbf s,r).$$ 
\end{prop}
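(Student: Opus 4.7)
The argument splits the five items of \eqref{hypol} according to whether they depend on the global sign of $\ell^{\mathbf s,\tilde{\mathbf m}}$. Starting from the sequence $(\ell_j^{\mathbf s,\tilde{\mathbf m}})_{j \geq 0}$ produced by the previous two subsections, a standard Borel summation in $h$ yields $\ell^{\mathbf s,\tilde{\mathbf m}} \in \mathcal C^\infty(\R^d, \R)$ supported in $B(\mathbf s, 3r)$ and admitting the classical expansion $\sum h^j \ell_j^{\mathbf s,\tilde{\mathbf m}}$ on $B(\mathbf s, 2r)$, regardless of the sign choice (thanks to Remark \ref{signl}). This immediately provides items \ref{hypol1}--\ref{hypoln-2}. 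Item \ref{hypoln-1} follows from Proposition \ref{phinu}: since $\phi_+ = W - W(\mathbf s) + (\ell_0^{\mathbf s,\tilde{\mathbf m}})^2/2$ on $B(\mathbf s, 2r)$ and $\mathrm{Hess}_{\mathbf s}\phi_+$ is positive definite by Lemma \ref{defpos}, the function $W + (\ell_0^{\mathbf s,\tilde{\mathbf m}})^2/2$ has a non-degenerate local minimum at $\mathbf s$. As the fact that the expansion solves \eqref{eikon} and \eqref{transport} is also built into the construction, all that is left is item \ref{hypoln}, which is precisely where the sign has to be fixed.

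Smoothness of $\theta^\alpha_{\tilde{\mathbf m},h}$ near $\mathrm{supp}\,\chi_\alpha$ requires the three pieces of its definition \eqref{thetainte}--\eqref{theta0} to glue to a $\mathcal C^\infty$ function. Because $\zeta$ is supported in $[-\gamma, \gamma]$, the integral formula in \eqref{thetainte} equals \emph{exactly} $1$ wherever $\ell^{\mathbf s,\tilde{\mathbf m}}(x) \geq \gamma$ and \emph{exactly} $0$ wherever $\ell^{\mathbf s,\tilde{\mathbf m}}(x) \leq -\gamma$. For this to match the constant extensions in \eqref{theta1}--\eqref{theta0} on the boundary of $B(\mathbf s, r) \cap \{|\ell_0^{\mathbf s,\tilde{\mathbf m}}| \leq 2\gamma\}$, the sign of $\ell^{\mathbf s,\tilde{\mathbf m}}$ must be positive on the portion of a small neighborhood of $\mathbf s$ lying in $E^\alpha(\tilde{\mathbf m})$ and negative on the opposite portion. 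To see that this can be arranged, I use the structural information in Proposition \ref{phinu}: the vector $\nu^{\mathbf s,\tilde{\mathbf m}} = \nabla \ell_0^{\mathbf s,\tilde{\mathbf m}}(\mathbf s)$ is a nonzero eigenvector of $\mathcal W_{\mathbf s}$ with eigenvalue $-|\nu^{\mathbf s,\tilde{\mathbf m}}|^2/2 < 0$. Since $\mathbf s$ is a saddle of index one, this is the unique unstable eigendirection, which by the Morse lemma is transverse to the hypersurface locally separating the two CCs of $\{W < W(\mathbf s)\}$ meeting at $\mathbf s$. Consequently $\ell_0^{\mathbf s,\tilde{\mathbf m}}$ has a well-defined constant sign on each of these two CCs near $\mathbf s$, and Remark \ref{signl} allows us to flip the global sign of $\ell^{\mathbf s,\tilde{\mathbf m}}$ so as to orient the positive side toward $E^\alpha(\tilde{\mathbf m})$.

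The main obstacle is verifying that the parameters $r$, $\gamma$ and $\varepsilon$ can be taken small enough (in this order) so that the transition region $B(\mathbf s, r) \cap \{|\ell_0^{\mathbf s,\tilde{\mathbf m}}| \leq 2\gamma\}$ is contained in $\mathrm{supp}\,\chi_\alpha$ and so that the gluing on every component of its boundary is exact rather than merely $O(h^\infty)$; this is where the support description $\mathrm{supp}\,\ell^{\mathbf s,\tilde{\mathbf m}} \subset B(\mathbf s, 3r)$ from the Borel step also has to be kept compatible with the oriented sign choice. I expect no new difficulty beyond a careful geometric bookkeeping: this is precisely the content of the adaptation of Proposition 5.2 in \cite{me} alluded to in the statement.
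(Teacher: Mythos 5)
Your strategy matches what the paper implicitly does by deferring to Proposition 5.2 of \cite{me}: items \ref{hypol1}--\ref{hypoln-1} come for free from the Borel summation and Proposition \ref{phinu} regardless of sign, and the sign of $\ell^{\mathbf s,\tilde{\mathbf m}}$ is then fixed using the fact that $\nu^{\mathbf s,\tilde{\mathbf m}}=\nabla \ell_0^{\mathbf s,\tilde{\mathbf m}}(\mathbf s)$ spans the unique negative eigendirection of $\mathcal W_{\mathbf s}$, so $\ell_0^{\mathbf s,\tilde{\mathbf m}}$ has a definite sign on each of the two local CCs of $\{W<W(\mathbf s)\}$ at $\mathbf s$, and Remark \ref{signl} lets you orient the positive one toward $E^\alpha(\tilde{\mathbf m})$. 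That part is correct and is exactly the key idea.

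The last paragraph, however, gets the geometric requirement backwards. You ask that the transition region $B(\mathbf s,r)\cap\{|\ell_0^{\mathbf s,\tilde{\mathbf m}}|\leq 2\gamma\}$ be \emph{contained in} $\mathrm{supp}\,\chi_\alpha$. But the transition region has three types of boundary pieces: the two ``lateral'' ones $\{\ell_0^{\mathbf s,\tilde{\mathbf m}}=\pm 2\gamma\}$, where \eqref{thetainte} exactly matches the constants $1$ and $0$ from \eqref{theta1}--\eqref{theta0} once $h$ is small (so the gluing is smooth), and the ``spherical'' piece $\partial B(\mathbf s,r)\cap\{|\ell_0^{\mathbf s,\tilde{\mathbf m}}|<2\gamma\}$, where \eqref{thetainte} gives a value in $(0,1)$ that does \emph{not} match the outer definition and where $\theta^\alpha_{\tilde{\mathbf m},h}$ genuinely fails to be continuous. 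For item \ref{hypoln} to hold you therefore need this spherical boundary piece, and hence part of the transition region, to lie \emph{outside} a neighborhood of $\mathrm{supp}\,\chi_\alpha$; containing the whole transition region inside $\mathrm{supp}\,\chi_\alpha$ would put the discontinuity inside it and break item \ref{hypoln}. Concretely, on $\partial B(\mathbf s,r)\cap\{|\ell_0^{\mathbf s,\tilde{\mathbf m}}|<2\gamma\}$ one has $W-W(\mathbf s)=\phi_+-(\ell_0^{\mathbf s,\tilde{\mathbf m}})^2/2\geq cr^2-2\gamma^2$, and this set lies at distance comparable to $r$ from $\Omega_\alpha$; so one should take $\gamma$ and then $\varepsilon$ (hence $\tilde\varepsilon$) small compared with $r$, which expels the bad boundary piece from $\Omega_\alpha+B(0,\varepsilon)\supset\mathrm{supp}\,\chi_\alpha$. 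With that correction the argument goes through as you intend.
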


\hip
We end this section with the following observation from \cite{BonyLPMichel} (Lemma 6.4).
\begin{lem}\label{l=-l}
If $\mathbf s\in \mathbf j^\alpha(\mathbf m)\cap \mathbf j^{\alpha'} (\mathbf m')$ with $\mathbf m\neq \mathbf m'$, we can suppose (up to a modification by $O(h^\infty)$) that
$$\ell^{\mathbf s,\mathbf m}=-\ell^{\mathbf s,\mathbf m'}$$
and consequently,
$$\theta^\alpha_{\mathbf m,h}=1-\theta^{\alpha'}_{\mathbf m',h}\qquad \text{on } B(\mathbf s,r)\cap \left(\mathrm{supp}\, \theta^\alpha_{\mathbf m,h}\cup \mathrm{supp}\, \theta^{\alpha'}_{\mathbf m',h} \right).$$
%The same holds with $\widehat{\mathbf j}(\mathbf m)$, $\widehat\ell^{\mathbf s,\mathbf m}$, $\widehat \theta_{\mathbf m,h}$ instead of $\mathbf j(\mathbf m)$, $\ell^{\mathbf s,\mathbf m}$, $\theta_{\mathbf m,h}$; or with $\widehat{\mathbf j}(\mathbf m')$, $\widehat\ell^{\mathbf s,\mathbf m'}$, $\widehat \theta_{\mathbf m',h}$ instead of $\mathbf j(\mathbf m')$, $\ell^{\mathbf s,\mathbf m'}$, $\theta_{\mathbf m',h}$; or both.
\end{lem}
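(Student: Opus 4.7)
The plan is to exploit the fact that the eikonal equation \eqref{eikon} and the transport equations \eqref{transport} that determine the coefficients $\ell_j^{\mathbf s,\mathbf m}$ depend on the saddle point $\mathbf s$ only through the local data of $W$ and $q_0$, and not on the particular minimum $\mathbf m$. Therefore, reading the construction of Section \ref{sectionequations} at the saddle $\mathbf s$ yields a unique (up to sign) formal series $\sum_{j\geq 0}h^j \tilde\ell_j$ of real smooth functions solving \eqref{eikon} and \eqref{transport} on $B(\mathbf s,2r)$, with $\tilde\ell_0$ vanishing at $\mathbf s$. Both $\ell^{\mathbf s,\mathbf m}$ and $\ell^{\mathbf s,\mathbf m'}$ must therefore agree with $\tilde\ell=\sum h^j\tilde\ell_j$ up to an overall sign and up to an $O(h^\infty)$ modification (coming from the Borel resummation). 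Performing simultaneous Borel constructions from the same formal series $\tilde\ell$, one obtains a single smooth function $\ell$ supported in $B(\mathbf s,3r)$, and by replacing $\ell^{\mathbf s,\mathbf m}$ and $\ell^{\mathbf s,\mathbf m'}$ by $\pm \ell$ we change each by at most $O(h^\infty)$ while preserving \eqref{eikon} and \eqref{transport} and the properties \ref{hypol1}-\ref{hypoln-1} of \eqref{hypol}.

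The next step is to verify that Proposition \ref{lexist} (which reflects condition \ref{hypoln} of \eqref{hypol}, namely the smoothness of $\theta^\alpha_{\mathbf m,h}$ on $\mathrm{supp}\,\chi_\alpha$) forces the two signs to be opposite. By Lemma \ref{1.4}, the set $B(\mathbf s,2r)\cap\{W<W(\mathbf s)\}$ has exactly two connected components; since $\mathbf s$ is a separating saddle lying in $\partial E^\alpha(\mathbf m)\cap \partial E^{\alpha'}(\mathbf m')$ and $\mathbf m\neq\mathbf m'$, one of these two components lies in $E^\alpha(\mathbf m)$ and the other in $E^{\alpha'}(\mathbf m')$. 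The sign required by Proposition \ref{lexist} is the one for which $\theta^\alpha_{\mathbf m,h}$ interpolates between the value $1$ on the side containing $\mathbf m$ and the value $0$ on the opposite side. Hence the sign chosen to orient $\ell^{\mathbf s,\mathbf m'}$ is necessarily opposite to the one chosen for $\ell^{\mathbf s,\mathbf m}$, which gives $\ell^{\mathbf s,\mathbf m}=-\ell^{\mathbf s,\mathbf m'}$ modulo $O(h^\infty)$ as required.

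For the statement concerning the cut-off functions, one decomposes $B(\mathbf s,r)\cap\bigl(\mathrm{supp}\,\theta^\alpha_{\mathbf m,h}\cup\mathrm{supp}\,\theta^{\alpha'}_{\mathbf m',h}\bigr)$ into the overlap region $\{|\ell_0^{\mathbf s,\mathbf m}|\leq 2\gamma\}$, where both $\theta$'s are given by formula \eqref{thetainte}, and its complement in the supports. On the overlap region, the identity $\ell^{\mathbf s,\mathbf m'}=-\ell^{\mathbf s,\mathbf m}$ combined with the evenness of $s\mapsto\zeta(s)\e^{-s^2/2h}$ gives
\begin{equation*}
\theta^{\alpha'}_{\mathbf m',h}(x)=\tfrac12\Bigl(1+A_h^{-1}\int_0^{-\ell^{\mathbf s,\mathbf m}(x)}\zeta(s)\e^{-s^2/2h}\D s\Bigr)=1-\theta^\alpha_{\mathbf m,h}(x).
\end{equation*}
On the complementary region, the set $\{|\ell_0^{\mathbf s,\mathbf m}|\leq 2\gamma\}$ being unchanged under $\ell_0\mapsto-\ell_0$, definitions \eqref{theta1}-\eqref{theta0} imply that $\theta^\alpha_{\mathbf m,h}$ and $\theta^{\alpha'}_{\mathbf m',h}$ take values in $\{0,1\}$, and the two opposite sides of $\mathbf s$ in $\{W<W(\mathbf s)\}$ are distributed between $E^\alpha(\mathbf m)+B(0,\varepsilon)$ and $E^{\alpha'}(\mathbf m')+B(0,\varepsilon)$, so that exactly one of $\theta^\alpha_{\mathbf m,h}$, $\theta^{\alpha'}_{\mathbf m',h}$ equals $1$ at each such point and their sum is still $1$.

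The main technical obstacle will be controlling the $O(h^\infty)$ correction uniformly while simultaneously enforcing both \ref{hypoln} from \eqref{hypol} (which is a \emph{global} smoothness constraint on $\theta^\alpha_{\mathbf m,h}$) and the identity $\ell^{\mathbf s,\mathbf m}=-\ell^{\mathbf s,\mathbf m'}$ (a \emph{local} constraint near $\mathbf s$). This requires performing the Borel summation for the pair $(\ell^{\mathbf s,\mathbf m},\ell^{\mathbf s,\mathbf m'})$ consistently rather than independently, as in \cite{BonyLPMichel}, Lemma 6.4.
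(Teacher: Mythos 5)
The paper gives no proof of this statement: it is quoted verbatim as an ``observation from \cite{BonyLPMichel} (Lemma 6.4)''. Your reconstruction is nonetheless essentially the correct argument, and it isolates the two ingredients that make the lemma work: (i) the eikonal/transport system \eqref{eikon}--\eqref{transport} at a fixed saddle $\mathbf s$ involves only $W$ and $q_0$ near $\mathbf s$, not the minimum being labelled, so the asymptotic series of $\ell^{\mathbf s,\mathbf m}$ and $\ell^{\mathbf s,\mathbf m'}$ can differ at most by a global sign; and (ii) the sign at $\mathbf s$ is pinned down by the requirement that $\theta$ interpolate from $1$ on the component of $\{W<W(\mathbf s)\}$ adjacent to $E^\alpha(\mathbf m)$ to $0$ on the other side, and since $E^\alpha(\mathbf m)\ne E^{\alpha'}(\mathbf m')$ (both are connected components of the same sublevel set, and $T_{k(\alpha)}$ is injective on $\mathtt U^{(0)}_{k(\alpha)}\ni\mathbf m,\mathbf m'$) these two components are distributed one on each side of $\mathbf s$, forcing opposite signs. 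The passage from $\ell^{\mathbf s,\mathbf m}=-\ell^{\mathbf s,\mathbf m'}$ to $\theta^\alpha_{\mathbf m,h}=1-\theta^{\alpha'}_{\mathbf m',h}$ via the evenness of $\zeta(s)\e^{-s^2/2h}$ and the case split on $\{|\ell_0|\le 2\gamma\}$ is correct.

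Two places deserve tightening. First, you assert the formal series is ``unique up to sign'' without argument. For $\ell_0$ this follows cleanly from Proposition \ref{phinu}: $\ell_0^2=2(\phi_+-W+W(\mathbf s))$ with $\phi_+$ determined intrinsically by $\Lambda_+$, and the nondegeneracy of $W+\ell_0^2/2$ at $\mathbf s$ forces $\ell_0$ to be determined up to sign. For $j\ge 1$ the transport operator $\mathcal L=\widetilde U\cdot\nabla+\tau$ has a source at $\mathbf s$ ($\widetilde U_0^{\mathbf s}$ has all positive eigenvalues) with $\tau_0^{\mathbf s}>0$; invertibility of $\mathcal L_0^{\mathbf s}$ on each $\mathcal P^n_{\mathrm{hom}}$ fixes the Taylor jet at $\mathbf s$, and the expanding flow then forces any smooth solution of $\mathcal L v=0$ near $\mathbf s$ to vanish identically, which gives uniqueness of $\ell_j$ on $B(\mathbf s,2r)$. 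This justification should be spelled out, since without it the $O(h^\infty)$ modification claim would be unfounded. Second, your closing paragraph overstates the remaining difficulty: the sign conditions imposed by \eqref{hypol}\,\ref{hypoln} at different saddle points of a fixed $\mathbf m$ are independent of one another, so once Proposition \ref{lexist} fixes a globally admissible sign configuration, the identity $\ell^{\mathbf s,\mathbf m}=-\ell^{\mathbf s,\mathbf m'}$ at each shared saddle is automatic and there is no further compatibility to check.
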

\hip

\section{Interaction between two wells}

%\color{red} Il faudra se donner une numérotation qui va bien. \color{black}
Let $\alpha$, $\alpha'\in \mathcal A$ as well as $\mathbf m\in \mathcal U^{(0)}_\alpha$ and $\mathbf m'\in \mathcal U^{(0)}_{\alpha'}$.

\begin{lem}\label{alpha=alpha'}
For all $\tilde {\mathbf m}\in \widehat{\mathcal U}^{(0)}_\alpha$ and $\tilde{\mathbf m}'\in \widehat{\mathcal U}^{(0)}_{\alpha'}$ such that $\mathbf j^\alpha(\tilde{\mathbf m})\cap \mathbf j^{\alpha'}(\tilde{\mathbf m}')\neq \emptyset$, the following holds:
$$\alpha= \alpha' \qquad \text{or} \qquad \varphi^\alpha_{\mathbf m}(\tilde{\mathbf m}) \varphi^{\alpha'}_{\mathbf m'}(\tilde{\mathbf m}')=0.$$
\end{lem}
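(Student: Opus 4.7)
The strategy is by contrapositive: suppose $\alpha\neq \alpha'$ and $\varphi^\alpha_{\mathbf m}(\tilde{\mathbf m})\,\varphi^{\alpha'}_{\mathbf m'}(\tilde{\mathbf m}')\neq 0$, and construct an $\mathcal R$-chain from $\mathbf m$ to $\mathbf m'$, which forces $\alpha=\alpha'$ and yields the desired contradiction. A shared saddle $\mathbf s\in \mathbf j^\alpha(\tilde{\mathbf m})\cap \mathbf j^{\alpha'}(\tilde{\mathbf m}')$ automatically enforces $\boldsymbol\sigma(\alpha)=\boldsymbol\sigma(\alpha')=W(\mathbf s)=:\sigma_k$, so $k(\alpha)=k(\alpha')=k$ and $\tilde{\mathbf m},\tilde{\mathbf m}'\in \mathtt U^{(0)}_k$. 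By Lemma \ref{1.4}, $T_k(\tilde{\mathbf m})$ and $T_k(\tilde{\mathbf m}')$ are among the (at most two) CCs of $\{W<\sigma_k\}$ adjacent to $\mathbf s$.

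The main case is when $\tilde{\mathbf m}\in \mathcal U^{(0)}_\alpha$ and $\tilde{\mathbf m}'\in \mathcal U^{(0)}_{\alpha'}$, so that both are label-$k$ minima; the one-step chain $\tilde{\mathbf m},\tilde{\mathbf m}'$ with the adjacency $\overline{T_k(\tilde{\mathbf m})}\cap \overline{T_k(\tilde{\mathbf m}')}\ni \mathbf s$ directly produces $\tilde{\mathbf m}\,\mathcal R\,\tilde{\mathbf m}'$, contradicting $\alpha\neq \alpha'$. When instead $\tilde{\mathbf m}'=\widehat{\mathbf m}_{\alpha'}$ (and symmetrically for $\tilde{\mathbf m}=\widehat{\mathbf m}_\alpha$), item \ref{phidelta} of Lemma \ref{phiexist} forces $\mathbf m'$ to be of type II: otherwise $\varphi^{\alpha'}_{\mathbf m'}(\widehat{\mathbf m}_{\alpha'})=\delta_{\mathbf m',\widehat{\mathbf m}_{\alpha'}}=0$ since $\mathbf m'\in \mathcal U^{(0)}_{\alpha'}$ while $\widehat{\mathbf m}_{\alpha'}\notin \mathcal U^{(0)}_{\alpha'}$, contradicting our nonvanishing assumption. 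Writing $\mathbf m'=\mathbf m_{k,j}$, this promotes $\widehat{\mathbf m}_{\alpha'}=\widehat{\mathbf m}_{k,j}$ to a legitimate intermediate element of any $\mathcal R$-chain.

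The chain is then assembled in three blocks: a chain inside $\mathcal U^{(0)}_\alpha$ linking $\mathbf m$ to $\tilde{\mathbf m}$ (trivial when $\tilde{\mathbf m}\in \mathcal U^{(0)}_\alpha$, and routed through $\widehat{\mathbf m}_\alpha$ when $\tilde{\mathbf m}=\widehat{\mathbf m}_\alpha$, using the type-II property of $\mathbf m$); the one-step link $\tilde{\mathbf m}\to \tilde{\mathbf m}'$ via $\mathbf s$; and a chain from $\tilde{\mathbf m}'$ back to $\mathbf m'$ staying inside the connected region $E_-(\mathbf m')\subset \{W<\sigma_{k-1}\}$, produced by the connectedness of the adjacency graph of CCs of $\{W<\sigma_k\}\cap E_-(\mathbf m')$ at separating saddles of level $\sigma_k$. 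The \textbf{main obstacle} is the combinatorial verification that every intermediate CC encountered in this last block corresponds to an allowed element (a label-$k$ minimum, or the hat of a type-II label-$k$ minimum); this is the only delicate point, and it follows by an argument essentially identical to the one used in the proof of \cite[Proposition 3.10]{michel}, which is made possible precisely because the minima involved lie in a single $\{W<\sigma_{k-1}\}$-component and the type-II condition has been forced by the preceding step.
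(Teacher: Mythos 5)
Your proposal is correct and follows essentially the same route as the paper, but it does more work than necessary in one place. The two key mechanisms match: the shared saddle gives $k(\alpha)=k(\alpha')$ and a one-step $\mathcal R$-link between $\tilde{\mathbf m}$ and $\tilde{\mathbf m}'$ whenever both are genuine (non-hat) minima, and item \ref{phidelta} of Lemma~\ref{phiexist} either annihilates the $\varphi$-product or forces the relevant $\mathbf m$ (or $\mathbf m'$) to be of type II so that the hat becomes a legitimate chain element. The paper, however, organizes this as a direct case split on whether $\widehat{\mathbf m}\in\{\tilde{\mathbf m},\tilde{\mathbf m}'\}$ and then on the types of $\mathbf m,\mathbf m'$, and it disposes of the chain ``combinatorial verification'' that you flag as the main obstacle in one stroke: it first notes $E_-(\mathbf m)=E_-(\mathbf m')$ (hence $\widehat{\mathbf m}=\widehat{\mathbf m}'$), invokes Lemma~3.4 from \cite{me} to place $\mathbf m,\mathbf m'$ in the same component $E_{\leqslant}$ of $\{W\leq \boldsymbol\sigma(\mathbf m)\}$, and observes that by uniqueness of $\widehat{\mathbf m}$ in $E_-(\mathbf m)$ every CC of $\{W<\boldsymbol\sigma(\mathbf m)\}\cap E_{\leqslant}$ carries exactly one element of $\boldsymbol\sigma^{-1}(\{\boldsymbol\sigma(\mathbf m)\})\cup\{\widehat{\mathbf m}\}$ --- so every CC the chain can traverse automatically holds an allowed element, and no further verification is needed. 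You instead defer this point to \cite[Proposition~3.10]{michel}, which is cited in this paper for the orthogonality estimates rather than for this labeling combinatorics; the actual crutch the paper leans on is Lemma~3.4 of \cite{me} together with the bijectivity of $T_k$ built into the adapted labeling. So: the argument is sound and the blocking structure of your chain is the right picture, but you could have avoided the ``main obstacle'' entirely by making the paper's structural observation rather than appealing to an external combinatorial argument.
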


\begin{proof}
First, notice that since $\widehat{\mathcal U}^{(0)}_{\alpha}\subset E_-(\mathbf m)$ and $\widehat{\mathcal U}^{(0)}_{\alpha'}\subset E_-(\mathbf m')$, our hypothesis implies that $ E_-(\mathbf m)=  E_-(\mathbf m')$.
In particular, $\widehat{\mathbf m}'=\widehat{\mathbf m}$.
If $\widehat{\mathbf m}\notin \{ \tilde{\mathbf m}, \, \tilde{\mathbf m}' \}$, we easily have $\tilde{\mathbf m} \mathcal R \tilde{\mathbf m}'$ and $\alpha=\alpha'$.
Let us now suppose that 
\begin{align}\label{mchapeaumtilde}
\widehat{\mathbf m}\in \{ \tilde{\mathbf m}, \, \tilde{\mathbf m}' \}.
\end{align}
According to Lemma 3.4 from \cite{me}, $\mathbf m$ and $\mathbf m'$ are in the same CC of $\{W\leq \boldsymbol \sigma (\mathbf m)\}$ that we denote $E_\leqslant$.
By uniqueness of $\widehat{\mathbf m}$ in $E_-(\mathbf m)$, each CC of $\{W<\boldsymbol \sigma(\mathbf m)\}\cap E_\leqslant$ contains exactly one element from $\boldsymbol \sigma^{-1} (\{\boldsymbol \sigma(\mathbf m)\}) \cup \{\widehat{\mathbf m}\}$.
If $\mathbf m$ or $\mathbf m'$ is of type II, we get by definition of $\mathcal R$ that $\alpha=\alpha'$.
%\\ If $\tilde{\mathbf m}=\widehat{\mathbf m}$ (resp. $\tilde{\mathbf m}'=\widehat{\mathbf m}'$) and $\mathbf m$ is of type I (resp. $\mathbf m'$ is of type I), then item \ref{phidelta} from Lemma \ref{phiexist} gives $\varphi^\alpha_{\mathbf m}(\tilde{\mathbf m})=0$ (resp. $\varphi^\alpha_{\mathbf m}(\tilde{\mathbf m})=0$).
Otherwise, \eqref{mchapeaumtilde} combined with item \ref{phidelta} from Lemma \ref{phiexist} yield $\varphi^\alpha_{\mathbf m}(\tilde{\mathbf m}) \varphi^{\alpha'}_{\mathbf m'}(\tilde{\mathbf m}')=0.$
\end{proof}

\hip
With the notations from Section \ref{sectquasim}, let us denote for $\tilde {\mathbf m}\in \widehat{\mathcal U}^{(0)}_\alpha$ and $\tilde{\mathbf m}'\in \widehat{\mathcal U}^{(0)}_{\alpha'}$
\begin{align}
\mathcal N^{\alpha,\alpha'}_{\tilde {\mathbf m}, \tilde {\mathbf m}'}=h^{-d/2}c^\alpha_h(\tilde{\mathbf m}) c^{\alpha'}_h(\tilde{\mathbf m}')\e^{\frac{W(\tilde{\mathbf m})+W(\tilde{\mathbf m}')}{h}}  \left\langle P_h \left((a^h)^{-1}\chi_\alpha\theta^{\alpha}_{\tilde{\mathbf m},h}\e^{-W/h}\right), (a^h)^{-1}\chi_{\alpha'} \theta^{\alpha'}_{\tilde{\mathbf m}',h}\e^{-W/h} \right\rangle.
\end{align}
When $\alpha=\alpha'$, we denote for shortness $\mathcal N^{\alpha,\alpha}_{\tilde {\mathbf m}, \tilde {\mathbf m}'}=\mathcal N^{\alpha}_{\tilde {\mathbf m}, \tilde {\mathbf m}'}$.

\begin{lem}\label{N0}
Let $\tilde {\mathbf m}\in \widehat{\mathcal U}^{(0)}_\alpha$ and $\tilde{\mathbf m}'\in \widehat{\mathcal U}^{(0)}_{\alpha'}$.
\begin{enumerate}[label=\textbullet]
\item If $\mathbf j^\alpha(\tilde{\mathbf m})\cap \mathbf j^{\alpha'}(\tilde{\mathbf m}')= \emptyset$, we have
$$\mathcal N^{\alpha,\alpha'}_{\tilde {\mathbf m}, \tilde {\mathbf m}'}=O\Big(h^\infty \e^{-\frac{\boldsymbol \sigma (\alpha)+\boldsymbol \sigma (\alpha')-W(\tilde{\mathbf m})-W(\tilde{\mathbf m}')}{h}}\Big).$$
\item When $\alpha=\alpha'$, we have
$$\mathcal N^{\alpha}_{\tilde {\mathbf m}, \tilde {\mathbf m}'}=h \e^{-\frac{2\boldsymbol \sigma (\alpha)-W(\tilde{\mathbf m})-W(\tilde{\mathbf m}')}{h}} N^{\alpha}_{\tilde {\mathbf m}, \tilde {\mathbf m}'}$$
with $N^{\alpha}_{\tilde {\mathbf m}, \tilde {\mathbf m}'}$ admitting an asymptotic expansion whose first term is
$$N_{\tilde {\mathbf m}, \tilde {\mathbf m}'}^{\alpha,0}=  \frac{(-1)^{1-\delta_{\tilde{\mathbf m}, \tilde{\mathbf m}'}}}{2\pi}  \Big(\sum_{\mathbf m \in H^\alpha(\tilde{\mathbf m})} \det\mathcal W_{\mathbf m}^{-1/2}\Big)^{-1/2} \Big(\sum_{\mathbf m' \in H^{\alpha}(\tilde{\mathbf m}')} \det\mathcal W_{\mathbf m'}^{-1/2}\Big)^{-1/2} \sum_{\mathbf s\in \mathbf j^\alpha(\tilde{\mathbf m})\cap \mathbf j^{\alpha}(\tilde{\mathbf m}')} |\det \mathcal W_{\mathbf s}|^{-1/2}   \tau_0^{\mathbf s}$$
where we recall the notation \eqref{hessiennes} and that $-\tau_0^{\mathbf s}$ is the negative eigenvalue of $2\varrho \,\mathcal W_{\mathbf s}$.
\end{enumerate}
\end{lem}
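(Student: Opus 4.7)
The plan is to exploit the factorization $P_h=a^h d_W^*\widehat Q\,d_W\, a^h$ from Hypothesis~\ref{hyporw}. Since $a^h$ cancels the $(a^h)^{-1}$ appearing in each building block, I will first rewrite
\[
\mathcal N^{\alpha,\alpha'}_{\tilde{\mathbf m},\tilde{\mathbf m}'} = h^{-d/2}c^\alpha_h(\tilde{\mathbf m})c^{\alpha'}_h(\tilde{\mathbf m}')\e^{(W(\tilde{\mathbf m})+W(\tilde{\mathbf m}'))/h}\big\langle \widehat Q\,d_W v_1,\,d_W v_2\big\rangle
\]
with $v_i = \chi_{\alpha_i}\theta^{\alpha_i}_{\tilde{\mathbf m}_i,h}\e^{-W/h}$. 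The identity $d_W(f\e^{-W/h})=h(\nabla f)\e^{-W/h}$ and \eqref{nablatheta} will give $d_W v_i = \gamma_i+\rho_i$, where $\gamma_i = \tfrac{h A_h^{-1}}{2}\chi_{\alpha_i}\sum_\mathbf s \e^{-(\ell^{\mathbf s,\tilde{\mathbf m}_i})^2/2h}\zeta(\ell^{\mathbf s,\tilde{\mathbf m}_i})\nabla\ell^{\mathbf s,\tilde{\mathbf m}_i}\mathbf 1_{B(\mathbf s,r)}\e^{-W/h}$ is localized near the saddles of $\mathbf j^{\alpha_i}(\tilde{\mathbf m}_i)$ and $\rho_i = h(\nabla\chi_{\alpha_i})\theta^{\alpha_i}_{\tilde{\mathbf m}_i,h}\e^{-W/h}$ carries an extra factor $\e^{-\tilde\varepsilon/h}=O(h^\infty)$ coming from \eqref{suppchi}. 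Cauchy--Schwarz will then handle every contribution involving a $\rho_i$ within the announced bounds, leaving $\langle \widehat Q\gamma_1,\gamma_2\rangle=\sum_{\mathbf s,\mathbf s'}\langle \widehat Q\gamma_{1,\mathbf s},\gamma_{2,\mathbf s'}\rangle$ to be analysed.

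For the first bullet, $\mathbf j^\alpha(\tilde{\mathbf m})\cap\mathbf j^{\alpha'}(\tilde{\mathbf m}')=\emptyset$ guarantees (for $r$ small enough) a uniform lower bound $d_0>0$ on the distance between $\mathrm{supp}\,\gamma_{1,\mathbf s}$ and $\mathrm{supp}\,\gamma_{2,\mathbf s'}$. The analyticity of $q^h$ in $\xi$ on a strip $|\mathrm{Im}\,\xi|<\kappa$ (item d) of Hypothesis~\ref{hyporw}) will allow a contour shift in the oscillatory integral defining the Weyl kernel $K(x,y)$ of $\widehat Q$, yielding $|K(x,y)|\leq C h^{-d}\e^{-\kappa'|x-y|/h}$ for any $0<\kappa'<\kappa$. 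Combined with the Laplace bound $\|\gamma_{i,\mathbf s}\|_{L^2}=O(h^{(d+2)/4}\e^{-\boldsymbol\sigma(\alpha_i)/h})$ and with $\e^{-\kappa' d_0/h}=O(h^\infty)$, this will give the first bullet after inserting the $\mathcal N$-prefactor. In the case $\alpha=\alpha'$, the same off-diagonal estimate disposes of all cross terms with $\mathbf s\neq \mathbf s'$, while pairs with $\mathbf s\in\mathbf j^\alpha(\tilde{\mathbf m})\setminus\mathbf j^\alpha(\tilde{\mathbf m}')$ vanish identically, so only the common saddles $\mathbf s\in\mathbf j^\alpha(\tilde{\mathbf m})\cap\mathbf j^\alpha(\tilde{\mathbf m}')$ will contribute at leading order.

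For such a common $\mathbf s$, Lemma~\ref{l=-l} allows taking $\ell^{\mathbf s,\tilde{\mathbf m}'}=-\ell^{\mathbf s,\tilde{\mathbf m}}$ modulo $O(h^\infty)$; since $\zeta$ and $\ell\mapsto\ell^2$ are even and $\nabla\ell^{\mathbf s,\tilde{\mathbf m}'}=-\nabla\ell^{\mathbf s,\tilde{\mathbf m}}$, this will yield $\gamma_{2,\mathbf s}=(-1)^{1-\delta_{\tilde{\mathbf m},\tilde{\mathbf m}'}}\gamma_{1,\mathbf s}$ up to an $O(h^\infty)$ error. Proposition~\ref{phinu} ensures that $W+(\ell^{\mathbf s,\tilde{\mathbf m}}_0)^2/2$ has a non-degenerate minimum at $\mathbf s$, so $\gamma_{1,\mathbf s}$ is a Gaussian wave packet concentrated at the phase-space point $(\mathbf s,0)$. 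Weyl symbolic calculus combined with $q_0(\mathbf s,0)=\varrho\,\mathrm{Id}$ from item~\ref{q0id} of Hypothesis~\ref{hyporw} will then give $\langle \widehat Q\gamma_{1,\mathbf s},\gamma_{1,\mathbf s}\rangle=\varrho\,\|\gamma_{1,\mathbf s}\|^2\big(1+O(h)\big)$, and a direct Laplace computation using $A_h^{-2}=2/(\pi h)+O(\e^{-c/h})$ from \eqref{approxa}, the identity $\det\mathrm{Hess}_{\mathbf s}(W+(\ell_0^{\mathbf s,\tilde{\mathbf m}})^2/2)=|\det \mathcal W_\mathbf s|$ and $\varrho|\nu^{\mathbf s,\tilde{\mathbf m}}|^2=\tau_0^\mathbf s$ from Proposition~\ref{phinu} will yield
\[
\varrho\,\|\gamma_{1,\mathbf s}\|^2=\frac{h}{2\pi}(\pi h)^{d/2}\,\tau_0^\mathbf s\,|\det \mathcal W_\mathbf s|^{-1/2}\e^{-2\boldsymbol\sigma(\alpha)/h}\big(1+O(h)\big).
\]
Summing over common saddles, multiplying by $(-1)^{1-\delta_{\tilde{\mathbf m},\tilde{\mathbf m}'}}$ and substituting the leading terms \eqref{expch} of $c^\alpha_h(\tilde{\mathbf m})$ and $c^\alpha_h(\tilde{\mathbf m}')$ will then reproduce exactly $N^{\alpha,0}_{\tilde{\mathbf m},\tilde{\mathbf m}'}$; the full classical expansion of $N^\alpha_{\tilde{\mathbf m},\tilde{\mathbf m}'}$ will follow by pushing the Laplace and symbolic expansions to every order. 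The main difficulty will be the rigorous justification of the identity $\langle \widehat Q\gamma_{1,\mathbf s},\gamma_{1,\mathbf s}\rangle=\varrho\|\gamma_{1,\mathbf s}\|^2(1+O(h))$ and its higher-order refinements: $\widehat Q$ is genuinely pseudo-differential, so one must rely on Weyl calculus or a direct oscillatory-integral treatment rather than any crude pointwise bound on the symbol.
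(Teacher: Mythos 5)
Your decomposition $d_W v_i=\gamma_i+\rho_i$ and your identification $\gamma_{2,\mathbf s}=(-1)^{1-\delta_{\tilde{\mathbf m},\tilde{\mathbf m}'}}\gamma_{1,\mathbf s}+O(h^\infty)$ via Lemma~\ref{l=-l} are exactly the skeleton of the paper's proof (see \eqref{dwfloc}--\eqref{interact1.5}), and the final Laplace/Weyl computation you sketch reproduces the $\vartheta^{\mathbf s,h}$ of the paper. The main methodological divergence is in the localization step: the paper establishes \eqref{opadwfloc} by non-stationary phase (repeated integration by parts in $\xi$ gives $O(h^N)$), whereas you propose a pointwise kernel bound $|K(x,y)|\lesssim h^{-d}\e^{-\kappa'|x-y|/h}$ obtained by shifting the $\xi$-contour. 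Be careful there: the entries of $q^h$ lie in $S^0_\kappa(\langle\xi_m\rangle^{-1}\langle\xi_p\rangle^{-1})$, so they decay in only two of the $d$ dual variables and the Weyl kernel is not a bona fide function for $d\geq 3$; the contour shift must be preceded by a few integrations by parts in $\xi$ (which is fine because analytic symbols remain analytic after differentiation). Since a finite number of integrations by parts already yield $O(h^N)$ for any $N$ at distance $\geq d_0$, the paper's route is slightly more economical, while yours buys the stronger exponential off-diagonal decay at the cost of this regularization step. The step you correctly flag as the real work, namely turning $\langle\widehat Q\gamma_{1,\mathbf s},\gamma_{1,\mathbf s}\rangle=\varrho\|\gamma_{1,\mathbf s}\|^2\bigl(1+O(h)\bigr)$ into a full asymptotic expansion, is exactly what the paper delegates to \cite{me} (proof of Lemma~5.3): one needs the Cauchy-formula contour shift inside the analyticity strip (cf.\ the construction of $\omega^{\tilde{\mathbf m},\alpha}$ in Proposition~\ref{phf}) plus stationary phase, not merely a crude pointwise evaluation $q_0(\mathbf s,0)=\varrho\,\mathrm{Id}$; absent that you only get the leading order, not the complete expansion asserted for $N^\alpha_{\tilde{\mathbf m},\tilde{\mathbf m}'}$. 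Modulo these two points, your plan is correct and essentially mirrors the paper.
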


\begin{proof}
First, notice that by Hypothesis \ref{hyporw}, we have
$$\left\langle P_h \left((a^h)^{-1}\chi_\alpha\theta^{\alpha}_{\tilde{\mathbf m},h}\e^{-W/h}\right), (a^h)^{-1}\chi_{\alpha'} \theta^{\alpha'}_{\tilde{\mathbf m}',h}\e^{-W/h} \right\rangle=\left\langle \tilde P_h \left(\chi_\alpha\theta^{\alpha}_{\tilde{\mathbf m},h}\e^{-W/h}\right), \chi_{\alpha'} \theta^{\alpha'}_{\tilde{\mathbf m}',h}\e^{-W/h} \right\rangle.$$
We will use the following localizations and estimates obtained thanks to \eqref{suppchi} and \eqref{nablatheta}:
%\color{red}Ces calculs sont surement à placer plus tôt:\color{black}
%\begin{align}
%d_W \big(\chi_\alpha\theta_{\tilde{\mathbf m},h}\e^{-W_{\mathbf m}/h}\big)&=\chi_\alpha\nabla\theta_{\tilde{\mathbf m},h}\e^{-W_{\mathbf m}/h}+O(h^\infty \e^{-S(\mathbf m)/h})\\
%		&=\frac{h}{2} A_h^{-1}  \sum_{\mathbf s\in \mathbf j(\tilde{\mathbf m})}  \e^{-\widetilde W_{\mathbf m}/h} \nabla \ell^{\mathbf s, \tilde{\mathbf m}} \,\1_{B(\mathbf s,r)}+O(h^\infty \e^{-S(\mathbf m)/h})\label{dwfloc}\\
%		&=d_W\big(\theta_{\tilde{\mathbf m},h}\e^{-W_{\mathbf m}/h}\big)+O(h^\infty \e^{-S(\mathbf m)/h})\label{dwf=dwtheta}\\		
%		&=O\Big(h^{\frac{1+d}{2}}\e^{-\frac{S(\mathbf m)}{h}}\Big). \label{dwf=o}
%\end{align}
\begin{align}\label{dwfloc}
d_W\big(\chi_\alpha\theta^\alpha_{\tilde{\mathbf m},h}\e^{-W/h}\big)\,\1_{\big\{\mathrm{dist}\big(\cdot,\, \mathbf j^\alpha(\widehat{\mathcal U}^{(0)}_\alpha)\big)\geq r\big\}}=O\Big(h^\infty \e^{-\frac{\boldsymbol \sigma(\alpha)}{h}}\Big) ;
\end{align}
\begin{align}\label{dwf=o}
d_W\big(\chi_\alpha\theta^\alpha_{\tilde{\mathbf m},h}\e^{-W/h}\big) =O\Big(h^{\frac{2+d}{4}}\e^{-\frac{\boldsymbol \sigma(\alpha)}{h}}\Big) 
\end{align}
and
%For $a^h\in \{g_h,\, q^h\}$, we have 
by the non stationnary phase applied as for \eqref{vgrand}
%\begin{align}
%\mathrm{Op}_h(a_h)\Big(d_W \big(\chi_\alpha\theta_{\tilde{\mathbf m},h}\e^{-W_{\mathbf m}/h}\big) \Big) &=\mathrm{Op}_h(a_h)\Big(d_W \big(\theta_{\tilde{\mathbf m},h}\e^{-W_{\mathbf m}/h}\big) \Big)+O_{L^2}\Big(h^\infty\e^{-\frac{S(\mathbf m)}{h}}\Big)\label{opadwf} \\
%		&=\frac h2 A_h^{-1}\sum_{ \mathbf s \in \mathbf j(\tilde{\mathbf m})}I^{\mathbf s,\tilde{\mathbf m}}_h(a_h)\,\e^{\frac{-\widetilde W_{\mathbf m,h}}{h}}\1_{B(\mathbf s,2r)}+O_{L^2}\Big(h^\infty\e^{-\frac{S(\mathbf m)}{h}}\Big). \label{opadwfloc}
%\end{align}
\begin{align}
\mathrm{Op}_h(q^h)\Big(d_W \big(\chi_\alpha\theta^\alpha_{\tilde{\mathbf m},h}\e^{-W/h}\big) \Big) \,\1_{\big\{\mathrm{dist}\big(\cdot,\, \mathbf j^\alpha(\widehat{\mathcal U}^{(0)}_\alpha)\big)\geq 2r\big\}}=O\Big(h^\infty \e^{-\frac{\boldsymbol \sigma(\alpha)}{h}}\Big).\label{opadwfloc}
		%&=\frac h2 A_h^{-1}\sum_{ \mathbf s \in \mathbf j(\tilde{\mathbf m})}I^{\mathbf s,\tilde{\mathbf m}}_h(a_h)\,\e^{\frac{-\widetilde W_{\mathbf m,h}}{h}}\1_{B(\mathbf s,2r)}+O_{L^2}\Big(h^\infty\e^{-\frac{S(\mathbf m)}{h}}\Big). 
\end{align}
%\color{red}Vrai début de la preuve:\color{black}\\
Using the factorized structure of $\tilde P_h$, the boundedness of $\mathrm{Op}_h(q^h)$ as well as \eqref{dwfloc}, \eqref{dwf=o} and \eqref{opadwfloc}, we can write that 
\begin{align}\label{interact1}
 \Big\langle \tilde P_h \big(\chi_\alpha\theta^\alpha_{\tilde{\mathbf m},h}\e^{-W/h}\big), \chi_{\alpha'}\theta^{\alpha'}_{\tilde{\mathbf m}',h}&\e^{-W/h} \Big\rangle+O\left(h^\infty \e^{-\frac{\boldsymbol{\sigma}(\alpha)+\boldsymbol{\sigma}(\alpha')}{h}}\right)\nonumber\\
		&=\sum_{\mathbf s\in \mathbf j^\alpha(\tilde{\mathbf m})\cap \mathbf j^{\alpha'}(\tilde{\mathbf m}')} \Big\langle \mathrm{Op}_h(q^h)\Big(d_W \big(\chi_\alpha\theta^\alpha_{\tilde{\mathbf m},h}\e^{-W/h}\big) \Big) , d_W\big(\chi_{\alpha'}\theta^{\alpha'}_{\tilde{\mathbf m}',h}\e^{-W/h}\big) \Big\rangle_{\mathbf s}
%		&= \frac{h^2}{4} A_h^{-2}  \sum_{\mathbf s\in \mathbf j(\tilde{\mathbf m})\cap \mathbf j(\tilde{\mathbf m}')} \int_{B(\mathbf s,r)}  I^{\mathbf s,\tilde{\mathbf m}}_h(q^h) \e^{-\frac{\widetilde W_{\mathbf m}+\widetilde W_{\mathbf m'}}{h}} \nabla \ell^{\mathbf s', \tilde{\mathbf m}'} \D x \\
%		&\qquad +O\big(h^\infty \e^{-\frac{S(\mathbf m)+S(\mathbf m')}{h}}\big).
\end{align}
where $\langle \cdot , \cdot \rangle_{\mathbf s}$ denotes the inner product on $L^2(B(\mathbf s,r))$.
This already proves the first statement.
Now when $\alpha=\alpha'$, thanks to the fact that $\e^{-W/h}\in \mathrm{Ker}$ $d_W$ and by Lemma \ref{l=-l}, we have for $\mathbf s\in \mathbf j^\alpha(\tilde{\mathbf m})\cap \mathbf j^{\alpha}(\tilde{\mathbf m}')$
\begin{align}\label{Ph1-}
d_W \big(\chi_\alpha\theta^\alpha_{\tilde{\mathbf m},h}\e^{-W/h}\big)&=d_W \big((\chi_\alpha\theta^\alpha_{\tilde{\mathbf m},h}-1)\e^{-W/h}\big)=d_W \big(\chi_\alpha(\theta^\alpha_{\tilde{\mathbf m},h}-1)\e^{-W/h}\big)+O\big(h^\infty \e^{-\frac{\boldsymbol{\sigma}(\alpha)}{h}}\big)\\
		&=(-1)^{1-\delta_{\tilde{\mathbf m}, \tilde{\mathbf m}'}}d_W \big(\chi_\alpha\theta^{\alpha}_{\tilde{\mathbf m}',h}\e^{-W/h}\big) +O\big(h^\infty \e^{-\frac{\boldsymbol{\sigma}(\alpha)}{h}}\big) \qquad \text{on } B(\mathbf s,r).
\end{align}
Thus, \eqref{interact1} becomes 
\begin{align}\label{interact1.5}
\Big\langle \tilde P_h \big(\chi_\alpha\theta^\alpha_{\tilde{\mathbf m},h}&\e^{-W/h}\big), \chi_\alpha\theta^{\alpha}_{\tilde{\mathbf m}',h}\e^{-W/h} \Big\rangle+O\big(h^\infty \e^{-\frac{2\boldsymbol{\sigma}(\alpha)}{h}}\big)\nonumber\\
			&=(-1)^{1-\delta_{\tilde{\mathbf m}, \tilde{\mathbf m}'}}\sum_{\mathbf s\in \mathbf j^\alpha(\tilde{\mathbf m})\cap \mathbf j^{\alpha}(\tilde{\mathbf m}')}  \Big\langle \mathrm{Op}_h(q^h)\Big(d_W \big(\chi_\alpha\theta^\alpha_{\tilde{\mathbf m},h}\e^{-W/h}\big) \Big) , d_W\big(\chi_{\alpha}\theta^{\alpha}_{\tilde{\mathbf m},h}\e^{-W/h}\big) \Big\rangle_{\mathbf s}.
\end{align}
We can now work as in \cite{me}, proof of Lemma 5.3, to get that
$$ \Big\langle \mathrm{Op}_h(q^h)\Big(d_W \big(\chi_\alpha\theta^\alpha_{\tilde{\mathbf m},h}\e^{-W/h}\big) \Big) , d_W\big(\chi_{\alpha}\theta^{\alpha}_{\tilde{\mathbf m},h}\e^{-W/h}\big) \Big\rangle_{\mathbf s} = \frac{h}{2\pi} (\pi h)^{d/2} \e^{-\frac{2\boldsymbol \sigma(\alpha)}{h}}\vartheta^{\mathbf s,h} $$
with $\vartheta^{\mathbf s,h}$ admitting a classical expansion whose first term is $|\det \mathcal W_{\mathbf s}|^{-1/2} \, \tau_0^{\mathbf s}$.
Combining this with \eqref{expch}, we get the announced result.
\end{proof}

\section{Interaction between two quasimodes}
\hip
By \eqref{quasim}, we have for $\mathbf m\in \mathcal U^{(0)}_\alpha$ and $\mathbf m' \in \mathcal U^{(0)}_{\alpha'}$
\begin{align}\label{pff1}
\langle P_h f_{\mathbf m,h},  f_{\mathbf m',h} \rangle&= h^{-d/2}\sum_{\tilde{\mathbf m} \in \widehat{\mathcal U}^{(0)}_\alpha} \sum_{\tilde{\mathbf m}' \in \widehat{\mathcal U}^{(0)}_{\alpha'}} \varphi^\alpha_{\mathbf m}(\tilde{\mathbf m}) \varphi^{\alpha'}_{\mathbf m'}(\tilde{\mathbf m}')c_h^\alpha(\tilde{\mathbf m})c_h^{\alpha'}(\tilde{\mathbf m}')\\
		&\qquad \qquad \qquad \qquad \qquad  \times \left\langle P_h \left((a^h)^{-1}\chi_\alpha\theta^{\alpha}_{\tilde{\mathbf m},h}\e^{-W_{\mathbf m}/h}\right), (a^h)^{-1}\chi_{\alpha'} \theta^{\alpha'}_{\tilde{\mathbf m}',h}\e^{-W_{\mathbf m'}/h} \right\rangle\\
%\end{align}
%Using the fact that %the support properties of $\varphi_{\mathbf m}$ and $\varphi_{\mathbf m'}$ as well as 
%$$d_W \big(\chi_\alpha\theta^\alpha_{\tilde{\mathbf m},h}\e^{-W_{\mathbf m}/h}\big)=d_W\big(\theta^\alpha_{\tilde{\mathbf m},h}\e^{-W_{\mathbf m}/h}\big)+O(h^\infty \e^{-S(\mathbf m)/h})$$
%and \eqref{dwf=o}, equation \eqref{pff1} becomes
%\begin{align}%\label{pff2}
%\langle P_h f_{\mathbf m,h},  f_{\mathbf m',h} \rangle
		&= \sum_{\tilde{\mathbf m} \in \widehat{\mathcal U}^{(0)}_\alpha} \sum_{\tilde{\mathbf m}' \in \widehat{\mathcal U}^{(0)}_{\alpha'}} \varphi^\alpha_{\mathbf m}(\tilde{\mathbf m}) \varphi^{\alpha'}_{\mathbf m'}(\tilde{\mathbf m}')\mathcal N^{\alpha,\alpha'}_{\tilde{\mathbf m}, \tilde{\mathbf m}'} \\
		&=\mathop{\sum_{\tilde{\mathbf m} \in \widehat{\mathcal U}^{(0)}_\alpha;\vphantom{\widehat{\mathcal U}^{(0)}_{\alpha'}}}}_{\substack{W(\tilde{\mathbf m})=W(\mathbf m)}}\quad \mathop{\sum_{\tilde{\mathbf m}' \in \widehat{\mathcal U}^{(0)}_{\alpha'};}}_{\substack{W(\tilde{\mathbf m}')=W(\mathbf m')}}  \varphi^\alpha_{\mathbf m}(\tilde{\mathbf m}) \varphi^{\alpha'}_{\mathbf m'}(\tilde{\mathbf m}')\mathcal N^{\alpha,\alpha'}_{\tilde{\mathbf m}, \tilde{\mathbf m}'} 
%+O\big(h^\infty \e^{-\frac{S(\mathbf m)+S(\mathbf m')}{h}}\big).
\end{align}
by item \ref{phinonnul} from Lemma \ref{phiexist}.
According to Lemma \ref{N0}, the leading terms in the previous sum are the ones for which $\tilde{\mathbf m}$, $\tilde{\mathbf m}'$ are such that $\mathbf j^\alpha(\tilde{\mathbf m})\cap \mathbf j^{\alpha'}(\tilde{\mathbf m}')\neq \emptyset$.
Combined with Lemma \ref{alpha=alpha'}, we get
\begin{align}\label{pff2}
\langle P_h f_{\mathbf m,h},  f_{\mathbf m',h} \rangle=\delta_{\alpha,\alpha'}  \sum_{\tilde{\mathbf m},\tilde{\mathbf m}' \in \widehat{\mathcal U}^{(0)}_{\alpha}} \varphi^\alpha_{\mathbf m}(\tilde{\mathbf m}) \varphi^{\alpha}_{\mathbf m'}(\tilde{\mathbf m}')\mathcal N^{\alpha}_{\tilde{\mathbf m}, \tilde{\mathbf m}'} +O\big(h^\infty \e^{-\frac{S(\mathbf m)+S(\mathbf m')}{h}}\big).
\end{align}

We now want to go from quasimodes to actual eigenfunctions.
This is where the optimization on the choice of the functions $\ell^{\mathbf s,\mathbf m}$ will enable us to have the correct error terms.
Here we briefly remind the procedure and give the main arguments.
We refer to \cite{LPMichel} for more details.
First, combining Proposition \ref{phf}, item \ref{hypoln-1} from \eqref{hypol}, Proposition \ref{lexist} and a standard Laplace method, we obtain the following fundamental estimate.

\begin{lem}\label{Pf^2}
Let $\mathbf m \in \underline{\mathcal U}^{(0)}$.
We have 
$$\|P_hf_{\mathbf m,h}\|=O\big(h^\infty \e^{-\frac{S(\mathbf m)}{h}}\big).$$
\end{lem}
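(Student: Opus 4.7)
The plan is to combine the explicit asymptotic formula for $P_h f_{\mathbf m,h}$ given by Proposition \ref{phf} with the fact that the functions $\ell^{\mathbf s,\tilde{\mathbf m}}$ were constructed in Section \ref{sectionequations} precisely so that the classical expansion of $\om^{\tilde{\mathbf m},\alpha}$ vanishes term by term, and then to close out with a standard Laplace estimate.

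First, I would recall that by Proposition \ref{phf}, up to an $O_{L^2}(h^\infty \e^{-S(\mathbf m)/h})$ term, $P_h f_{\mathbf m,h}$ is a finite linear combination of functions of the form
$$a^h \, h^{1-d/4} A_h^{-1}\, \varphi^\alpha_{\mathbf m}(\tilde{\mathbf m})\, c^\alpha_h(\tilde{\mathbf m})\, \om^{\tilde{\mathbf m},\alpha}(x)\, \e^{-\widetilde W_{\tilde{\mathbf m},h}(x)/h} \1_{\mathbf j^\alpha(\tilde{\mathbf m})+B(0,2r)}(x),$$
indexed by $\tilde{\mathbf m}\in\widehat{\mathcal U}^{(0)}_\alpha$. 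By item \ref{phinonnul} of Lemma \ref{phiexist}, the only $\tilde{\mathbf m}$ contributing satisfy $W(\tilde{\mathbf m})=W(\mathbf m)$; for such a $\tilde{\mathbf m}$ and any $\mathbf s\in \mathbf j^\alpha(\tilde{\mathbf m})$ we have $W(\mathbf s)-W(\mathbf m) = \boldsymbol\sigma(\alpha)-W(\mathbf m)=S(\mathbf m)$.

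Next, by Lemma \ref{expom} combined with the expression of the coefficients $\om_j^{\tilde{\mathbf m},\alpha}$ as the left-hand sides of the equations \eqref{eikon} and \eqref{transport}, and by Proposition \ref{lexist} which asserts that the chosen $\ell^{\mathbf s,\tilde{\mathbf m}}$ solves all these equations, every term of the classical expansion of $\om^{\tilde{\mathbf m},\alpha}$ is identically zero on each $B(\mathbf s,2r)$. Hence $\om^{\tilde{\mathbf m},\alpha}=O(h^\infty)$ uniformly on $\mathbf j^\alpha(\tilde{\mathbf m})+B(0,2r)$. Now item \ref{hypoln-1} of \eqref{hypol} guarantees that $\mathbf s$ is a non-degenerate local minimum of the leading term of $\widetilde W_{\tilde{\mathbf m},h}+W(\mathbf m)$, so a standard Laplace estimate gives
$$\big\|\e^{-\widetilde W_{\tilde{\mathbf m},h}/h}\1_{B(\mathbf s,2r)}\big\|_{L^2} = O\big(h^{d/4}\e^{-S(\mathbf m)/h}\big).$$

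Finally I would collect powers of $h$: with $A_h^{-1}=O(h^{-1/2})$ from \eqref{approxa}, $a^h$, $\varphi^\alpha_{\mathbf m}(\tilde{\mathbf m})$, $c^\alpha_h(\tilde{\mathbf m})=O(1)$, and $\|\om^{\tilde{\mathbf m},\alpha}\|_{\infty,B(\mathbf s,2r)}=O(h^\infty)$, each summand is bounded in $L^2$ by
$$O\big(h^{1-d/4}\cdot h^{-1/2}\cdot h^\infty \cdot h^{d/4} \e^{-S(\mathbf m)/h}\big)=O\big(h^\infty \e^{-S(\mathbf m)/h}\big),$$
and summing over the finitely many $\tilde{\mathbf m}\in\widehat{\mathcal U}^{(0)}_\alpha$ and $\mathbf s\in \mathbf j^\alpha(\tilde{\mathbf m})$, and adding the remainder from Proposition \ref{phf}, yields the claim. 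I do not expect a real obstacle here: the entire difficulty of the statement has already been absorbed into the careful construction of $\ell^{\mathbf s,\tilde{\mathbf m}}$ in Section \ref{sectionequations}; what remains is purely bookkeeping of $h$-powers and a Gaussian integration at a non-degenerate quadratic minimum.
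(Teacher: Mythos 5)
Your proposal is correct and is essentially the paper's own argument: the paper proves Lemma \ref{Pf^2} in one sentence by "combining Proposition \ref{phf}, item \ref{hypoln-1} from \eqref{hypol}, Proposition \ref{lexist} and a standard Laplace method," and your write-up just spells out exactly that chain of reasoning with the correct power counting (the $h^{1-d/4}\cdot h^{-1/2}\cdot h^\infty\cdot h^{d/4}$ bookkeeping, the vanishing of all $\om_j^{\tilde{\mathbf m},\alpha}$ by construction, and the Gaussian estimate at the non-degenerate minimum of $\widetilde W_{\tilde{\mathbf m},h}$ at each $\mathbf s$ with value $S(\mathbf m)$).
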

\hip
Now, considering the orthogonal projector on the generalized eigenspace associated to the small eigenvalues of $P_h$ given by
\begin{align}\label{Pi0}
\Pi_0=\frac{1}{2i\pi}\int_{|z|=c h}(z-P_h)^{-1}\D z
\end{align}
and writing
$$(1-\Pi_0) f_{\mathbf m,h}=\frac{-1}{2i\pi}\int_{|z|= c h}z^{-1}(z-P_h)^{-1}P_h f_{\mathbf m,h}\D z,$$
Lemma \ref{Pf^2} together with the resolvent estimate \eqref{resest} give that
%\begin{lem}\label{1-pi0}
for any $\mathbf m\in \mathcal U^{(0)}$, we have 
$$\|(1-\Pi_0)f_{\mathbf m,h}\|=O\big(h^\infty \e^{-\frac{S(\mathbf m)}{h}}\big).$$
%\end{lem}
Proceeding as in Proposition 4.10 from \cite{LPMichel}, one can then establish the following thanks to Proposition \ref{ortho}.
\begin{lem}\label{Pi0fon}
The family $(\Pi_0f_{\mathbf m,h})_{\mathbf m\in \mathcal U^{(0)}}$ is almost orthonormal:
there exists $c>0$ such that 
$$\langle \Pi_0f_{\mathbf m,h}, \Pi_0f_{\mathbf m',h}  \rangle=\delta_{\mathbf m, \mathbf m'}+O(\e^{-c/h}).$$
In particular, it is a basis of the space $H=\mathrm{Ran}\, \Pi_0$ introduced in \eqref{Pi0}.\\
Moreover, we have
$$\langle P_h\Pi_0f_{\mathbf m,h}, \Pi_0f_{\mathbf m',h}  \rangle= \langle P_hf_{\mathbf m,h}, f_{\mathbf m',h}  \rangle +O\big(h^\infty \e^{-\frac{S(\mathbf m)+S(\mathbf m')}{h}}\big).$$
\end{lem}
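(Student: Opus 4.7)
The plan is to exploit that $P_h$ is self-adjoint (it factors as $a^h \tilde P_h a^h$ with $\tilde P_h = d_W^* \hat Q d_W$ self-adjoint and $a^h$ a positive multiplication), so the contour-integral projector $\Pi_0$ from \eqref{Pi0} is an orthogonal projector (in particular $\Pi_0 = \Pi_0^* = \Pi_0^2$) that commutes with $P_h$. Everything will then reduce to the already-established bounds on $\|(1-\Pi_0)f_{\mathbf m,h}\|$ and on $\|P_h f_{\mathbf m,h}\|$ together with Proposition \ref{ortho}.

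For the almost-orthonormality, I would write
\[
\langle \Pi_0 f_{\mathbf m,h}, \Pi_0 f_{\mathbf m',h}\rangle = \langle \Pi_0^2 f_{\mathbf m,h}, f_{\mathbf m',h}\rangle = \langle f_{\mathbf m,h}, f_{\mathbf m',h}\rangle - \langle (1-\Pi_0) f_{\mathbf m,h}, f_{\mathbf m',h}\rangle.
\]
The first term equals $\delta_{\mathbf m,\mathbf m'} + O(\e^{-c/h})$ by Proposition \ref{ortho}, and the second term is bounded via Cauchy--Schwarz by $\|(1-\Pi_0) f_{\mathbf m,h}\|\,\|f_{\mathbf m',h}\| = O(h^\infty \e^{-S(\mathbf m)/h})$ thanks to the estimate stated just before this lemma and the fact that $\|f_{\mathbf m',h}\|=1+o(1)$. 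This yields $\langle \Pi_0 f_{\mathbf m,h}, \Pi_0 f_{\mathbf m',h}\rangle = \delta_{\mathbf m,\mathbf m'} + O(\e^{-c/h})$ for some (possibly smaller) $c>0$. Consequently the Gram matrix of the family $(\Pi_0 f_{\mathbf m,h})_{\mathbf m\in \mathcal U^{(0)}}$ is invertible for $h$ small, so the $n_0$ elements are linearly independent; since $\dim H = n_0$ by item b) of Hypothesis \ref{hyporw}, they form a basis of $H$.

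For the matrix-element identity, since $\Pi_0$ is defined by a contour integral of the resolvent of $P_h$ it commutes with $P_h$, so $P_h\Pi_0 = \Pi_0 P_h$. Combining this with $\Pi_0^* = \Pi_0 = \Pi_0^2$ gives
\[
\langle P_h\Pi_0 f_{\mathbf m,h}, \Pi_0 f_{\mathbf m',h}\rangle = \langle P_h f_{\mathbf m,h}, \Pi_0 f_{\mathbf m',h}\rangle = \langle P_h f_{\mathbf m,h}, f_{\mathbf m',h}\rangle - \langle P_h f_{\mathbf m,h}, (1-\Pi_0)f_{\mathbf m',h}\rangle,
\]
and Cauchy--Schwarz with Lemma \ref{Pf^2} and the bound on $\|(1-\Pi_0) f_{\mathbf m',h}\|$ estimates the last term by
\[
\|P_h f_{\mathbf m,h}\|\,\|(1-\Pi_0) f_{\mathbf m',h}\| = O\bigl(h^\infty \e^{-(S(\mathbf m)+S(\mathbf m'))/h}\bigr),
\]
which is exactly what is claimed.

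There is no real obstacle here: the content of the lemma lies in the ingredients already proved (Proposition \ref{ortho}, Lemma \ref{Pf^2}, the $(1-\Pi_0)$-estimate obtained from the resolvent bound \eqref{resest}), and the argument is purely an algebraic manipulation using self-adjointness of $P_h$ and $\Pi_0$. The only mild subtlety is to check that both error terms in the orthonormality statement can be absorbed into a single $O(\e^{-c/h})$ (the $h^\infty \e^{-S(\mathbf m)/h}$ contribution is clearly of this form since $S(\mathbf m)>0$), and that the basis conclusion follows from the dimension count $\#\mathcal U^{(0)} = n_0 = \dim H$.
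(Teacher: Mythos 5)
Your proof is correct, and it is essentially the standard argument. The paper itself does not write out a proof but refers to \cite{LPMichel}, Proposition 4.10, which proceeds along exactly the same lines: $P_h$ is self-adjoint (since $\tilde P_h = d_W^* \widehat Q\, d_W$ is self-adjoint and $a^h$ is a real positive multiplier), hence the Riesz projector $\Pi_0$ is orthogonal and commutes with $P_h$, and the two estimates are then obtained by Cauchy--Schwarz from Proposition \ref{ortho}, Lemma \ref{Pf^2}, and the bound $\|(1-\Pi_0)f_{\mathbf m,h}\| = O(h^\infty \e^{-S(\mathbf m)/h})$. Your reduction is sound, and the dimension count $\#\mathcal U^{(0)} = n_0 = \dim H$ for the basis claim is the right justification.
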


Let us re-label the local minima $\mathbf m_1,\dots, \mathbf m_{n_0}$ so that $(S(\mathbf m_j))_{j=1,\dots,n_0}$ is non increasing in $j$.
For shortness, we will now denote 
$$f_j=f_{\mathbf m_j,h}$$
which still depends on $h$.
%Note in particular that according to Lemma \ref{Pff}, $\tilde\lambda_j =O(\tilde\lambda_k)$ whenever $1\leq j\leq k \leq n_0$.
We also denote $(\tilde u_j)_{j=1,\dots,n_0}$ the orthogonalization by the Gram-Schmidt procedure of the family $(\Pi_0f_j)_{j=1,\dots,n_0}$ and 
$$u_j=\frac{\tilde u_j}{\|\tilde u_j\|}.$$
%In particular, thanks to Lemma \ref{Pi0fon} \color{red}+ mémoire \color{black}, it holds 
%$$u_{\mathbf m}=\Pi_0f_{\mathbf m,h}+O(\e^{-c/h}).$$
In this setting and with our previous results, we get the following (see \cite{LPMichel}, Proposition 4.12 for a proof).

\begin{lem}\label{Puu'}
For all $1\leq j,k \leq n_0$, it holds 
$$\langle P_h u_j, u_k \rangle=\langle P_hf_j, f_k  \rangle +O\big(h^\infty \e^{-\frac{S(\mathbf m_j)+S(\mathbf m_k)}{h}}\big).$$
\end{lem}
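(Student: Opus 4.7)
The plan is to express $u_j$ in terms of $\Pi_0 f_a$ via the Gram--Schmidt triangular matrix, expand the bilinear form, and then exploit Lemma \ref{Pi0fon} to reduce everything to $\langle P_h f_a, f_b\rangle$ at the cost of an $O(h^\infty e^{-(S_a+S_b)/h})$ remainder. First I would write $u_j = \sum_{a\le j} T_{ja}\,\Pi_0 f_a$ for a lower-triangular matrix $T$ (lower triangular because Gram--Schmidt is applied in the order of decreasing $S(\mathbf m_a)$). Since by Lemma \ref{Pi0fon} the Gram matrix $G_{ab}=\langle \Pi_0 f_a, \Pi_0 f_b\rangle$ satisfies $G = I + O(e^{-c/h})$, the Cholesky-type factor $T$ obeys
$$T_{jj} = 1 + O(e^{-c/h}),\qquad T_{ja}=O(e^{-c/h}) \text{ for } a<j.$$
The crucial observation that drives the entire argument is the absorption $e^{-c/h} = O(h^\infty)$, so in fact $T_{ja} - \delta_{ja} = O(h^\infty)$.

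Expanding gives
$$\langle P_h u_j, u_k\rangle = \sum_{a\le j,\ b\le k} T_{ja} T_{kb}\,\langle P_h \Pi_0 f_a, \Pi_0 f_b\rangle.$$
I would then apply the second part of Lemma \ref{Pi0fon} to each factor, obtaining
$$\langle P_h u_j, u_k\rangle = \sum_{a\le j,\ b\le k} T_{ja} T_{kb}\,\langle P_h f_a, f_b\rangle + \sum_{a\le j,\ b\le k} T_{ja} T_{kb}\,O\bigl(h^\infty e^{-(S(\mathbf m_a)+S(\mathbf m_b))/h}\bigr).$$
For the remainder sum, since the labeling makes $(S(\mathbf m_j))_j$ non-increasing one has $S(\mathbf m_a)\ge S(\mathbf m_j)$ for $a\le j$ and similarly for $b\le k$, so it is bounded by $O(h^\infty e^{-(S(\mathbf m_j)+S(\mathbf m_k))/h})$ as desired.

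The main term then needs to be shown to equal $\langle P_h f_j, f_k\rangle + O(h^\infty e^{-(S(\mathbf m_j)+S(\mathbf m_k))/h})$. I would separate the diagonal contribution $(a,b)=(j,k)$ from the rest: the diagonal yields $T_{jj}T_{kk}\,\langle P_h f_j, f_k\rangle = (1+O(h^\infty))\,\langle P_h f_j, f_k\rangle$, and the off-diagonal terms have $T_{ja}T_{kb} = O(h^\infty)$. At this point the missing ingredient is the a priori size bound
$$|\langle P_h f_a, f_b\rangle| = O\bigl(h\,e^{-(S(\mathbf m_a)+S(\mathbf m_b))/h}\bigr),$$
valid for all $a,b$. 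This is precisely what is built into the computations of Section 8: combining \eqref{pff1}--\eqref{pff2} with Lemma \ref{N0} (which gives $h$ when the quasimodes share a saddle and $h^\infty$ otherwise) and Lemma \ref{alpha=alpha'} yields exactly this scaling. Multiplying this by the $O(h^\infty)$ coefficient finally gives $O(h^\infty e^{-(S(\mathbf m_j)+S(\mathbf m_k))/h})$ using once more monotonicity of $S(\mathbf m_a)$ in $a$.

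The step I expect to require the most care is checking the uniform bound $|\langle P_h f_a, f_b\rangle| = O(h\,e^{-(S(\mathbf m_a)+S(\mathbf m_b))/h})$: the naive estimate coming from Lemma \ref{Pf^2} and Cauchy--Schwarz only yields $O(h^\infty e^{-S(\mathbf m_a)/h})$, which is too weak, so one really has to revisit the structural identity \eqref{pff1}--\eqref{pff2} and invoke Lemma \ref{N0} together with Lemma \ref{alpha=alpha'} to absorb both exponentials. Once this is in place, the algebraic bookkeeping for the Gram--Schmidt remainders is routine, and the whole proof hinges on the simple but crucial identity $e^{-c/h}=O(h^\infty)$.
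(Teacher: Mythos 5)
Your proof is correct. The paper itself does not supply an argument for this lemma—it delegates to Proposition 4.12 of \cite{LPMichel}—so there is no in-paper proof to match literally, but the argument you give is the standard one and the one implicitly being invoked: express $u_j=\sum_{a\le j}T_{ja}\Pi_0 f_a$ with $T$ lower triangular and $T-\mathrm{Id}=O(\e^{-c/h})$ from Lemma \ref{Pi0fon}, apply the second estimate of Lemma \ref{Pi0fon} to each $\langle P_h\Pi_0 f_a,\Pi_0 f_b\rangle$, and control the off-diagonal contributions via the monotonicity of $S(\mathbf m_a)$ in $a$. You also correctly flag the genuinely non-trivial step: the Cauchy--Schwarz bound obtained from Lemma \ref{Pf^2}, $|\langle P_h f_a,f_b\rangle|\le\|P_hf_a\|\|f_b\|=O(h^\infty\e^{-S(\mathbf m_a)/h})$ (or its symmetrized version via $\langle P_h^{1/2}f_a,P_h^{1/2}f_b\rangle$, giving $\e^{-(S(\mathbf m_a)+S(\mathbf m_b))/2h}$), is too weak, and one must go back to the structural identity \eqref{pff2} combined with Lemma \ref{N0}, Lemma \ref{alpha=alpha'}, and item c) of Lemma \ref{phiexist} to obtain the sharp $|\langle P_h f_a,f_b\rangle|=O(h\,\e^{-(S(\mathbf m_a)+S(\mathbf m_b))/h})$. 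With that bound in hand, the bookkeeping with $T_{ja}T_{kb}=\delta_{ja}\delta_{kb}+O(\e^{-c/h})$ and the absorption $\e^{-c/h}=O(h^\infty)$ closes the argument exactly as you write.
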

\hip
In order to compute the small eigenvalues of $P_h$, let us now consider the restriction $P_h|_H:H\to H$.
We denote $\hat u_j=u_{n_0-j+1}$ and $\mathcal M$ the matrix of $P_h|_H$ in the orthonormal basis $(\hat u_1,\dots,\hat u_{n_0})$.
Since $\hat u_{n_0}=u_1=f_1$, we have 
$$\mathcal M=\begin{pmatrix}
		\mathcal M'&0\\
		0&0
\end{pmatrix} \qquad \text{where }\qquad \mathcal M'=\Big(\langle P_h\hat u_{j}, \hat u_k \rangle\Big)_{1\leq j,k\leq n_0-1}$$
and it is sufficient to study the spectrum of $\mathcal M'$.
We will also denote $\{\hat S_1< \dots < \hat S_p\}$ the set $\{S(\mathbf m_j)\, ; \, 2\leq j \leq n_0\}$ and for $1\leq k \leq p$, $E_k$ the subspace of $L^2(\R^{d})$ generated by $\{\hat u_r\, ; \,  S(\mathbf m_r)=\hat S_k\}$.
Finally, we set $\varpi_k=\e^{-(\hat S_k-\hat S_{k-1})/h}$ for $2\leq k \leq p$ and $\varepsilon_j(\varpi)=\prod_{k=2}^j \varpi_k=\e^{-(\hat S_j-\hat S_1)/h}$ for $2\leq j \leq p$ (with the convention $\varepsilon_1(\varpi)=1$).

%We will consider the symmetric matrix $M^\#_h\in \mathcal M_{n_0-1}(\R)$ defined by
%\begin{align}\label{Msharp}
%(M^\#_h)_{j,k}=
%\begin{cases}
%    N^\alpha \varphi^\alpha_{\mathbf m_{n_0-j+1}} \cdot  \varphi^\alpha_{\mathbf m_{n_0-k+1}}     & \quad \text{if } \mathbf m_{n_0-j+1} \, ,\; \mathbf m_{n_0-k+1} \in \mathcal U^{(0)}_\alpha\\
%   0  & \quad \text{otherwise} 
%  \end{cases}
%%M^\#_h=\Big(\langle P_h f_{n_0-j+1}, f_{n_0-k+1} \rangle\Big)_{1\leq j,k\leq n_0-1},
%\end{align}
%where $N^\alpha$ is the matrix introduced in Lemma \ref{N0}.
For a given class $\alpha\in \mathcal A$, let us denote $n_\alpha=|\mathcal U^{(0)}_\alpha|$ and also label its elements $\mathbf m^\alpha_1,\dots, \mathbf m^\alpha_{n_\alpha}$ so that $(S(\mathbf m^\alpha_j))_{j=1,\dots,n_\alpha}$ is non decreasing in $j$.
We also set $\mathbf m^\alpha_{n_\alpha+1}=\widehat{\mathbf m}$ for some $\mathbf m\in \mathcal U^{(0)}_\alpha$.
We 
%can now introduce the restriction to $\alpha$ of 
will consider the matrix 
%$M^\#_h$ which is given by
\begin{align}\label{Malpha}
M^\alpha_h=\big(N^\alpha \varphi^\alpha_{\mathbf m^\alpha_{j}} \cdot  \varphi^\alpha_{\mathbf m^\alpha_{k}}\big)_{1\leq j,k\leq n_\alpha}=\mathcal T_\alpha^* N^\alpha \mathcal T_\alpha
\end{align}
where $N^\alpha$ is the matrix introduced in Lemma \ref{N0} and
\begin{align}\label{T}
\mathcal T_\alpha = \big(\varphi^\alpha_{\mathbf m^\alpha_k}(\mathbf m^\alpha_j)\big)\mathop{}_{\substack{1\leq j \leq n_\alpha+1 \\  1\leq k \leq n_\alpha}}.
\end{align}
%\color{red} Introduire $\mathcal R_j$ et $\mathcal J$... \\
%avec la convention $\mathrm{Spec} \big( \mathcal J \circ \mathcal R_j(M^\alpha_h) \big)=\emptyset$ if $\hat S_j \notin \{S(\mathbf m^\alpha_k)\, ; \, k=1,\dots,n_\alpha \}$.
%\color{black}
Before we can state our main result, we need to introduce some material from \cite{BonyLPMichel}.
For the finite dimensional vector space $E = E_{1} \oplus \cdots \oplus E_{p}$, and $j \in \{ 1 , \dots , p \}$, let us write a general matrix $M \in \mathcal M ( E )$ by blocks
\begin{equation} \label{b9}
M = \left( \begin{array}{cc}
A & B \\
C & D
\end{array} \right) : ( E_{1} \oplus \cdots \oplus E_{j - 1} ) \oplus ( E_{j} \oplus \cdots \oplus E_{p} ) \longrightarrow ( E_{1} \oplus \cdots \oplus E_{j - 1} ) \oplus ( E_{j} \oplus \cdots \oplus E_{p} ) .
\end{equation}
If $A \in \mathcal M ( E_{1} \oplus \cdots \oplus E_{j-1} )$ is invertible, the Schur matrix of $M$ (with respect to the vector space $E_{1} \oplus \cdots \oplus E_{j - 1}$) is the matrix on $E_{j} \oplus \cdots \oplus E_{p}$ defined by
\begin{equation*}
\mathcal R_{j} ( M ) = D - C A^{- 1} B ,
\end{equation*}
where by convention $\mathcal R_{1} ( M ) = M$. By the Schur complement method, $M$ is invertible if and only if $\mathcal R_{j} ( M )$ is invertible. %Moreover, the map $\mathcal R_{j}$ sends $\Sr^{+} ( E )$ into $\Sr^{+} ( E_{j} \oplus \cdots \oplus E_{p} )$ and
%$\S r^{+}_{cl} ( E )$ into $\Sr^{+}_{cl} ( E_{j} \oplus \cdots \oplus E_{p} )$. 
We will also denote by $\mathcal J : \mathcal M ( \oplus_{k = j , \ldots , p } E_{k} ) \to \mathcal M ( E_{j})$ the restriction map to the first vector space $E_{j}$ of $\oplus_{k = j , \ldots , p } E_{k}$. More precisely, with the notations of \eqref{b9}, we will write $\mathcal J ( M ) = A$ when $j=1$. Of course, the map $\mathcal J$ depends on $j \in \{ 1 , \dots , p \}$, but we omit this dependence since the set on which $\mathcal J$ is acting will be obvious in the sequel.
We will also use the convention 
$$\mathrm{Spec} \big( \mathcal J \circ \mathcal R_j(M^\alpha_h) \big)=\emptyset \quad \text{if} \quad\hat S_j \notin \{S(\mathbf m^\alpha_k)\, ; \, k=1,\dots,n_\alpha \}.$$

\begin{thm}\label{thmgene}
With the notations introduced above, we have
$$\mathrm{Spec}(\mathcal M')\subset h \e^{-2\hat S_1/h} \bigcup_{\alpha\in \mathcal A} \bigcup_{j=1}^p \varepsilon_j(\varpi)^2 \Big( \mathrm{Spec} \big( \mathcal J \circ \mathcal R_j(M^\alpha_h) \big) + D\big(0,O(h^\infty)\big)\Big) $$
with $M^\alpha_h$ admitting an asymptotic expansion whose first term is $\mathcal T^*_0 N^{\alpha,0} \mathcal T_0$
where $N^{\alpha,0}$ is defined in Lemma \ref{N0}
and $\mathcal T_0$ is the leading term of the matrix $\mathcal T_\alpha$ given by Lemma \ref{phiexist}.
\end{thm}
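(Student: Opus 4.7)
The plan is to read off the matrix of $\mathcal M'$ in the orthonormal basis $(\hat u_j)$, observe that it has a block structure indexed by the equivalence classes $\alpha\in\mathcal A$, and then extract the eigenvalues scale by scale through a graded Schur complement reduction. I would begin by combining Lemma \ref{Puu'} with \eqref{pff2} and Lemma \ref{N0}. Since $\mathbf m_j\in \mathcal U_\alpha^{(0)}$ implies $\boldsymbol\sigma(\alpha)-W(\mathbf m_j)=S(\mathbf m_j)$, and since item \ref{phinonnul} of Lemma \ref{phiexist} forces the non-vanishing contributions of $\varphi^\alpha_{\mathbf m_j}(\tilde{\mathbf m})\varphi^{\alpha'}_{\mathbf m_k}(\tilde{\mathbf m}')$ to satisfy $W(\tilde{\mathbf m})=W(\mathbf m_j)$ and $W(\tilde{\mathbf m}')=W(\mathbf m_k)$, the entries reduce to
\begin{equation*}
\langle P_h\hat u_j,\hat u_k\rangle = h\,\delta_{\alpha(\mathbf m_j),\alpha(\mathbf m_k)}\,\e^{-(S(\mathbf m_j)+S(\mathbf m_k))/h}\,(M^\alpha_h)_{j,k} + O\!\left(h^\infty\,\e^{-(S(\mathbf m_j)+S(\mathbf m_k))/h}\right),
\end{equation*}
where the sum over $\tilde{\mathbf m},\tilde{\mathbf m}'\in\widehat{\mathcal U}^{(0)}_\alpha$ is precisely the $(j,k)$-entry of $\mathcal T_\alpha^* N^\alpha \mathcal T_\alpha = M_h^\alpha$, thanks to \eqref{Malpha} and \eqref{T}.

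Next, I would factor out the common scale. Using the ordering $S(\mathbf m_j)$ non-increasing and grouping minima by their common value among $\hat S_1<\cdots<\hat S_p$, one can write $\mathcal M'$ as a block-diagonal matrix over $\alpha\in\mathcal A$ modulo $O(h^\infty)$ relative corrections, each $\alpha$-block further possessing a graded structure in the scales $\varepsilon_j(\varpi)=\e^{-(\hat S_j-\hat S_1)/h}$. More precisely, within a fixed $\alpha$, the matrix of $P_h|_{H_\alpha}$ factors as $h\,\e^{-2\hat S_1/h}\,\Delta_\alpha\,[M^\alpha_h+O(h^\infty)]\,\Delta_\alpha$, where $\Delta_\alpha$ is the diagonal matrix whose entry on the block $E_k\cap H_\alpha$ is $\varepsilon_k(\varpi)$. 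This is exactly the graded setup to which the abstract reduction lemma of \cite{BonyLPMichel}, Appendix A, applies.

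I would then invoke that abstract lemma: when a symmetric matrix has such a graded block structure governed by the small parameters $\varpi_2,\dots,\varpi_p$, its spectrum is contained, modulo relative $O(h^\infty)$ errors, in $\bigcup_{j=1}^p\varepsilon_j(\varpi)^2\,\mathrm{Spec}\big(\mathcal J\circ\mathcal R_j(M^\alpha_h)\big)$, the Schur complement at scale $j$ peeling off the contribution of the coarser scales $\hat S_1,\ldots,\hat S_{j-1}$ and the projector $\mathcal J$ isolating the $\hat S_j$-block. Taking the union over $\alpha\in\mathcal A$ and restoring the prefactor $h\,\e^{-2\hat S_1/h}$ yields the announced inclusion. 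The leading term of $M_h^\alpha$ is $\mathcal T_0^*\,N^{\alpha,0}\,\mathcal T_0$ by Lemma \ref{N0} and item \ref{expphi} of Lemma \ref{phiexist}.

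The main obstacle is quantitative rather than conceptual: one must check that all remainders — the $O(h^\infty \e^{-(S(\mathbf m_j)+S(\mathbf m_k))/h})$ from Lemma \ref{Puu'}, the error in \eqref{pff2}, and the across-$\alpha$ contributions killed by Lemma \ref{alpha=alpha'} — remain of relative size $h^\infty$ with respect to the correct scale $h\,\varepsilon_j^2\,\e^{-2\hat S_1/h}$ at each step of the Schur reduction, and survive undamaged the inversion of the upper-left block of $M_h^\alpha$ (which may itself have eigenvalues of order $h^\infty$). This is precisely the point where the sharpness of Proposition \ref{phf} and Lemma \ref{Pf^2} — made possible by the optimization of $\ell^{\mathbf s,\mathbf m}$ solving the eikonal and transport equations in Section \ref{sectionequations} — is essential, and is what allows the full asymptotic expansion of the prefactors announced in the theorem.
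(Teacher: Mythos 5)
Your proposal follows essentially the same route as the paper: compute the matrix entries of $\mathcal M'$ from Lemma \ref{Puu'}, \eqref{pff2} and Lemma \ref{N0}, identify the graded structure governed by the scales $\varepsilon_j(\varpi)$ together with the $\alpha$-block decoupling from Lemma \ref{alpha=alpha'}, invoke the abstract graded-matrix reduction from \cite{BonyLPMichel}, and read off the leading term $\mathcal T_0^* N^{\alpha,0}\mathcal T_0$ from Lemmas \ref{phiexist} and \ref{N0}. There is, however, a genuine gap in the step you flag at the end. You correctly worry that the Schur reduction requires inverting upper-left blocks and that these blocks might a priori have $O(h^\infty)$ eigenvalues; but you attribute the resolution to the sharpness of Proposition \ref{phf} and Lemma \ref{Pf^2}. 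That sharpness only gives the $O(h^\infty)$ size of the \emph{error} terms; it does nothing to control the reference matrix $M^\alpha_h$ itself. What is actually needed — and what the paper supplies — is that $M^\alpha_h$ is positive definite with a positive definite leading term: the paper shows $\mathcal T_0$ is injective (because $(\varphi^\alpha_{\mathbf m})$ is orthonormal) and cites \cite{BonyLPMichel}, Proposition 6.8 for the factorization $N^{\alpha,0}=L_\alpha^*L_\alpha$ with $L_\alpha$ injective, which together force $M_0^\alpha=\mathcal T_0^*N^{\alpha,0}\mathcal T_0>0$. Only then does $h^{-1}\e^{2\hat S_1/h}\mathcal M'$ become a \emph{classical graded symmetric matrix} in the sense of \cite{BonyLPMichel}, Definition 6.7, and Theorem 4 there can be applied. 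Without this positivity, the Schur complements are not well-defined and the $O(h^\infty)$ perturbations cannot be absorbed.

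A secondary, more cosmetic, difference: the paper assembles the exactly block-diagonal reference matrix $M^\#_h=\mathrm{diag}(M^\alpha_h;\,\alpha\in\mathcal A)$ (after a permutation), applies the graded-matrix theorem once globally to $h^{-1}\e^{2\hat S_1/h}\mathcal M'=\Omega(\varpi)\bigl(M^\#_h+O(h^\infty)\bigr)\Omega(\varpi)$, and only afterwards uses block-diagonality to write $\mathrm{Spec}\bigl(\mathcal J\circ\mathcal R_j(M^\#_h)\bigr)=\bigcup_\alpha\mathrm{Spec}\bigl(\mathcal J\circ\mathcal R_j(M^\alpha_h)\bigr)$. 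Your plan to reduce $\alpha$-block by $\alpha$-block and then take the union is morally equivalent, but as stated it glosses over the fact that $\mathcal M'$ itself is only approximately block diagonal: the cross-$\alpha$ entries are $O(h^\infty)$ but not zero, so one cannot literally restrict $P_h$ to an ``$H_\alpha$''. Passing through the exactly block-diagonal $M^\#_h$ is what makes the argument watertight at no extra cost.
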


\begin{proof}
Consider the symmetric matrix $M^\#_h\in \mathcal M_{n_0-1}(\R)$ defined by
\begin{align}\label{Msharp}
(M^\#_h)_{j,k}=
\begin{cases}
    N^\alpha \varphi^\alpha_{\mathbf m_{n_0-j+1}} \cdot  \varphi^\alpha_{\mathbf m_{n_0-k+1}}     & \quad \text{if } \mathbf m_{n_0-j+1} \, ,\; \mathbf m_{n_0-k+1} \in \mathcal U^{(0)}_\alpha\\
   0  & \quad \text{otherwise} 
  \end{cases}
%M^\#_h=\Big(\langle P_h f_{n_0-j+1}, f_{n_0-k+1} \rangle\Big)_{1\leq j,k\leq n_0-1}
\end{align}
and notice that in view of Lemma \ref{Puu'} and \eqref{pff2}, we have
$$h^{-1}\e^{2\hat S_1/h}\mathcal M'=\Omega(\varpi) \big(M^\#_h+O(h^\infty)\big) \Omega(\varpi)$$
where $\Omega(\varpi)=\mathrm{diag}\big(\varepsilon_1(\varpi)\mathrm{Id}_{E_1},\dots,\varepsilon_p(\varpi)\mathrm{Id}_{E_p}\big)$.
Clearly, $M^\alpha_h$ is the restriction to $\alpha$ of the matrix $M^\#_h$ which is permutation similar to the block-diagonal matrix $\mathrm{diag}\big(M^\alpha_h \, ; \, \alpha\in \mathcal A \big)$.
In particular, $M^\#_h$ admits an asymptotic expansion thanks to Lemmas \ref{phiexist} and \ref{N0}.
Moreover, it is positive definite 
%if and only if 
as each $M^\alpha_h$ appears to be positive definite.
Indeed, %the first term of $M^\alpha_h$ is
$\mathcal T_0$ is clearly injective as the family $(\varphi^\alpha_{\mathbf m})_{\mathbf m\in \mathcal U^{(0)}_\alpha}$ is orthonormal and it is shown in \cite{BonyLPMichel}, Proposition 6.8, that $N^{\alpha,0}=L_\alpha^*L_\alpha$ where $L_\alpha$ is an injective matrix, so $M_0^\alpha$ is positive definite.
In the words of Definition 6.7 from \cite{BonyLPMichel}, we obtain that $h^{-1}\e^{2\hat S_1/h}\mathcal M'$ is a classical graded symmetric matrix so we can apply Theorem 4 from \cite{BonyLPMichel} to get 
$$\mathrm{Spec}(\mathcal M')\subset h \e^{-2\hat S_1/h}  \bigcup_{j=1}^p \varepsilon_j(\varpi)^2 \Big( \mathrm{Spec} \big( \mathcal J \circ \mathcal R_j(M^\#_h) \big) + D\big(0,O(h^\infty)\big)\Big).$$
We can then conclude as
$$\mathrm{Spec} \big( \mathcal J \circ \mathcal R_j(M^\#_h) \big)=\bigcup_{\alpha\in \mathcal A} \mathrm{Spec} \big( \mathcal J \circ \mathcal R_j(M^\alpha_h) \big)$$
(see \cite{BonyLPMichel}, Theorem 6 and above for details).
\end{proof}

\addtocontents{toc}{\SkipTocEntry}
\subsection*{Acknowledgements}
The author is grateful to Laurent Michel for his advice through this work as well as Jean-François Bony for helpful discussions.\\
This work is supported by the ANR project QuAMProcs 19-CE40-0010-01.

\nocite{}
\bibliography{bibBoltz} 
%\addcontentsline{toc}{section}{Bibliography}
\bigskip
\scshape \small Thomas Normand, Laboratoire de Math\'ematiques Jean Leray, Universit\'e de Nantes
   %\centerline{351, Cours de la Lib\'eration - F 33 405 TALENCE, France}
\normalfont

\end{document}